\newtheorem{Lemme}{Lemma}[section]
\newtheorem{Prop}{Proposition}[section]  
\newtheorem{Rmq}{Remark}[section]
\newtheorem{Thm}{Theorem}[section]
\theoremstyle{remark}
\newcommand{\be}{\begin{equation}}
\newcommand{\ee}{\end{equation}}
\newcommand{\ba}{\begin{array}}
\newcommand{\ea}{\end{array}}
\newcommand{\bea}{\begin{eqnarray}}
\newcommand{\eea}{\end{eqnarray}}
\newcommand{\bee}{\begin{eqnarray*}}
\newcommand{\eee}{\end{eqnarray*}}
\newcommand{\rob}{\textcolor{blue}}
\newcommand{\R} {\mathbb{R}}    
\newcommand{\Z} {\mathbb{Z}}
\newcommand{\cF} {\mathcal{F}}     
\newcommand{\cI} {\mathcal{I}}
\newcommand{\cR} {\mathcal{R}}     
\newcommand{\cS} {\mathcal{S}}
\newcommand{\La} {\Lambda}
\def \with {\quad\!\hbox{with}\!\quad}
\def \andf {\quad\!\hbox{and}\!\quad}
\def\dZ_1{\delta\!Z_1}
\def\d{\partial}
\def\wh{\widehat}
\def\wt{\widetilde}
\def\pa{\partial}
\def\ddj{\dot\Delta_j}
\def\ddk{\dot\Delta_k}
\def\ddq{\dot\Delta_q}
\title[]{Relaxation approximation and asymptotic stability \\of stratified solutions to the IPM equation}
\date{}
\subjclass[2020]{35Q35; 35B40; 76N10}
\keywords{Stratified
fluids, Boussinesq approximation, Incompressible Porous Media, Relaxation, Anisotropic Besov spaces, asymptotic stability}
\author[R. Bianchini]{Roberta Bianchini}
\address[R. Bianchini]{Consiglio Nazionale Delle Ricerche, 00185, Rome, Italy}
\author[T. Crin-Barat]{Timothée Crin-Barat}
\address[T. Crin-Barat]{Chair of Computational Mathematics, Fundación Deusto,	Avenida de las Universidades, 24, 48007 Bilbao, Basque Country, Spain.}
\author[M. Paicu]{Marius Paicu}
\address[M. Paicu]{Institut de Mathématiques de Bordeaux, Université de Bordeaux, 33405 Talence Cedex, France}
\begin{document}
\maketitle

\begin{abstract}
 We prove the nonlinear asymptotic stability of stably stratified solutions to the Incompressible Porous Media equation (IPM) for 
initial perturbations in $\dot H^{1-\tau}(\R^2) \cap \dot H^s(\R^2)$ with $s > 3$ and for any $0 < \tau <1$. Such result improves the existing literature, where the asymptotic stability is proved for initial perturbations belonging at least to $H^{20}(\R^2)$. \\
 More precisely, the aim of the article is threefold.
 First, we provide a simplified and improved proof of global-in-time well-posedness of the Boussinesq equations with strongly damped vorticity in $H^{1-\tau}(\R^2) \cap \dot H^s(\R^2)$ with $s > 3$ and $0 < \tau <1$. Next, we prove the strong convergence of the Boussinesq system with damped vorticity towards (IPM) under a suitable scaling.  Lastly, the asymptotic stability of stratified solutions to (IPM) follows as a byproduct.\\
A symmetrization of the approximating system and a careful study of the anisotropic properties of the equations via anisotropic Littlewood-Paley decomposition play key roles to obtain uniform energy estimates. Finally, one of the main new and crucial points is the integrable time decay of the vertical velocity $\|u_2(t)\|_{L^\infty (\R^2)}$ for initial data only in $\dot H^{1-\tau}(\R^2) \cap \dot H^s(\R^2)$ with $s >3$.

\end{abstract}


\section{Introduction}\label{sec:intro}
The Incompressible Porous Media (IPM) system in two space dimensions is an active scalar equation 
\begin{equation}\label{eq:IPMsystem}
\begin{cases}
        \d_t \eta + \textbf{u} \cdot \nabla \eta=0,\\
        \textbf{u}=- \kappa \nabla P + \textbf{g}\eta, \qquad \textbf{g}=(0,-g)^T, \qquad \text{(Darcy law)} \\
        \nabla \cdot \textbf{u}=0,
\end{cases}
\end{equation}
modeling the dynamics of a fluid of density $\eta=\eta(t, x, y):\R^+\times \R^2 \rightarrow \R$ through a porous medium according to the Darcy law, where $\kappa >0 $ and $g>0$ are the permeability coefficient and the gravity acceleration respectively, which hereafter are assumed to be $\kappa=g=1$.
We refer to \cite{castro2} and references therein for further explanations on the physics and the applications of the model. The active scalar velocity $\textbf{u}=(u_1, u_2)$ of system \eqref{eq:IPMsystem} can be reformulated in terms of 
a singular integral operator of degree 0 as follows
\begin{equation}
    \textbf{u}=- (\cR_2, - \cR_1) \mathcal{R}_1 \rho, 
\end{equation}
where $(\cR_1, \cR_2)$ is the two-dimensional Riesz transform, i.e.
\begin{align}\label{eq:riesz}
    \cR_1=(-\Delta)^{-1/2} \partial_x, \qquad  \cR_2=(-\Delta)^{-1/2} \partial_y.
\end{align}
We are interested in the stability properties of the stratified steady state $\overline \rho_{\text{eq}}(y)=\rho_0-y$, where $\rho_0>0$ is a constant averaged density. 
Introducing the perturbed unknown $\rho=\rho(t,x,y)$ such that $\eta (t,x,y)=\overline{\rho}_{\rm{eq}}(y)+\rho(t,x,y)$, the perturbation $\rho$ satisfies
\begin{equation}\label{eq:IPM}
\begin{aligned}
    \pa_t \rho - \cR_1^2 \rho  = (\cR_2 \cR_1\rho , -\cR_1^2 \rho) \cdot \nabla \rho.
\end{aligned}
 \tag{IPM-diss}
\end{equation}
The nonlinear asymptotic stability of the stratified steady state $\overline{\rho}_{\text{eq}}(y)=\rho_0-y$ in the whole space $\R^2$ has been first established by Elgindi in \cite{ElgindiIPM}, for initial data at least in $H^{20}(\R^2).$
The analogous result but in the periodic finite channel $\mathbb{T}\times [-\pi,\pi]$ is due to Castro, C\'ordoba and Lear \cite{castro2} under slightly less restrictive regularity assumptions. 
We remark that the linear operator 
$$\d_t \rho -\cR_1^2 \rho=0$$
 in frequency space $\xi=(\xi_1, \xi_2)\in \R^2 $ is dissipative everywhere but the hyperplane $\xi_1=0$ thanks to the monotonicity $\bar \rho_{\text{eq}}'(y)<0$ (stable stratification, see \cite{gallay, lannes}). 
Therefore, it appears natural to approximate \eqref{eq:IPM} by a system with a (partially) dissipative linear part. In this regard, it turns out that the two-dimensional Boussinesq equations in vorticity form with strongly damped vorticity is a good approximation to \eqref{eq:IPM} under a suitable scaling of time and unknowns. 
To the best of our knowledge, the approximation of \eqref{eq:IPM} via the Boussinesq system with damped vorticity is new and establishing its rigorous validity in the sense of strong convergence of solutions is in the scope of the present work.\\
The two-dimensional Boussinesq equations with damped velocity with gravity acceleration $g=1$ and background density profile $\overline \rho_{\text{eq}}(y)=\rho_0-y$  read as follows
\begin{equation}\label{eq:B-vel}
\left\{
\begin{aligned}
\d_t b- u_2 &=-(u_1\d_x b + u_2\d_y b), \\
\d_t u_1 + \d_x P &=-\frac{u_1}{\varepsilon}-(u_1\d_x u_1 + u_2\d_y u_1), \\
\d_t u_2 + \d_y P &=-b-\frac{u_2}{\varepsilon}-(u_1\d_x u_2 + u_2\d_y u_2), \\
\d_x u_1 + \d_y u_2 &=0,
\end{aligned}
\right.
\end{equation}
where $\textbf{u}=(u_1,u_2) \in \R^2$ is the velocity field, $b$ is the buoyancy term and $P$ is the incompressible pressure.
 \:
This system is obtained by a linearization of the density-dependent incompressible Euler equations with damped velocity around the \emph{hydrostatic stratified steady state}
\begin{align}\label{eq:rest}
(\overline \rho_{\rm{eq}} (y), (0,0), \overline P_{\rm{eq}}(y)) \quad \text{with} \quad \overline P_{\rm{eq}}'(y)=- \overline \rho_{\rm{eq}} (y).
\end{align}

 Several mathematical works in the existing literature have been devoted to this system. Besides explaining the applications of \eqref{eq:B-vel} to electrocapillarity, Castro, C\'ordoba and Lear in \cite{castro2019} (see also references therein) establish the global-in-time well-posedness of system \eqref{eq:B-vel} in the bounded domain $\mathbb{T} \times [0,1]$.
In the whole space, global smooth solutions have been first provided by Wan in \cite{WanBoussinesq} by means of Green function analysis and energy methods.

We refer to \cite{BianchiniNatalini2DB, castro2019} for a (formal) derivation of \eqref{eq:B-vel} by a linearization of the density-dependent incompressible Euler equations around the hydrostatic stratified steady state under the strong Boussinesq approximation $\rho \sim \rho_0$ \cite{lannes}.
\\
For our scopes, it is convenient to rewrite the system in vorticity $\omega$ and stream function $\phi$ formulation where $\textbf{u}=\nabla^\perp \phi$ with the sign convention (as in \cite{ElgindiIPM})  $\nabla^\perp=(\d_y, -\d_x)$, which gives 
\begin{equation}
\left\{
\begin{aligned}
\label{eq:system-oldvariable}
    &\d_t b - (-\Delta)^{-1} \d_x \omega  = - \textbf{u} \cdot \nabla b, \\
    &\d_t \omega +  \frac{\omega}{\varepsilon} - \d_x b  = - \textbf{u} \cdot \nabla \omega,\\
    &\Delta \phi=\omega.
\end{aligned}
\right.\tag{2D-Bouss}
\end{equation}
Our first goal is to establish a systematic and improved proof of the global-in-time existence of smooth solutions for small data to \eqref{eq:system-oldvariable} (in Theorem \ref{Thm:Exist}),
 exploiting the anisotropic nature of the system by means of anisotropic Littlewood-Paley decomposition of the Fourier space $(\xi_1, \xi_2) \in \R^2$ as introduced in \cite{CheminZhangAniso, PaicuAniso, klaus}. Since the \emph{linear} and \emph{undamped} approximation of the system supports the propagation of anisotropic waves of dispersion relation ${\pm \xi_1}/{|\xi|}= \mp i\widehat{\cR_1}$ (see \cite{bianchini2021, lannes}), it is not surprising that the horizontal anisotropic decomposition of the phase space plays a key role in our refined analysis.
In our case, the linear part of system \eqref{eq:system-oldvariable} is dissipative provided that $\xi_1 \neq 0$, therefore it is natural to build an energy functional with an anisotropic Fourier multiplier as a weight (multiplier method).
This idea allows to prove that $\|u_2(t)\|_{L^\infty(\R^2)}$ is integrable in time without assuming any additional $L^1(\R^2)$ integrability of the initial data. More precisely, the control of $\|u_2\|_{L^1_T L^\infty (\R^2)}$ without $L^1$-integrability and high regularity assumptions is new and this is a striking point to provide a substantially improved global-in-time well-posedness of \eqref{eq:system-oldvariable} for small data only in $\dot H^{1-\tau}(\R^2) \cap \dot H^s(\R^2) $ with $0 < \tau <1$ and $s \ge 3+\tau$.
 
For convenience of the reader, we point out that our small-data global-in-time existence in Theorem \ref{Thm:Exist} could be reformulated as a result of \emph{nonlinear asymptotic stability} of the hydrostatic steady state \eqref{eq:rest}  with $\overline{\rho}_{\rm{eq}}(y)=\rho_0-y$, under the evolution of the Boussinesq system below 
\begin{equation}\label{eq:euler}
    \begin{cases}
          \partial_t \eta + \textbf{u}\cdot \nabla \eta=0, \\
          \partial_t  \textbf{u} +  \textbf{u} \cdot \nabla  \textbf{u} + \nabla P =\eta \textbf{g}, \qquad \textbf{g}=(0,-g),\\
          \nabla \cdot \textbf{u}=0.\qquad\qquad \qquad\qquad \qquad \qquad  
    \end{cases}\tag{E}
\end{equation}
This reformulation holds for system \eqref{eq:euler} with initial density $\eta_{\rm{in}}(x,y)$ such that
$\|\eta_{\rm{in}}-\overline{\rho}_{\rm{eq}}\|_X \ll 1$ for a suitable functional space $X$, in the same spirit of \cite{castro2019}.
 
Our second and main goal is to rigorously justify the relaxation limit of the two-dimensional Boussinesq equations with damped vorticity \eqref{eq:system-oldvariable} towards \eqref{eq:IPM} under a suitable scaling that we introduce later on. To the best of our knowledge, this relaxation approximation is new. 

Finally, as a byproduct of the relaxation limit and the global well-posedness of \eqref{eq:system-oldvariable}, we provide a new proof of existence of global smooth solutions in $\mathbb{R}^2$ to the equation \eqref{eq:IPM} for the perturbation $\rho$, with small initial data $\rho (0,x,y)=\rho_{\rm{in}}(x,y) \in \dot H^{1-\tau}(\R^2) \cap \dot H^s(\R^2)$ with $0< \tau <1$ and $s \ge 3+\tau$.
Agaib, this result can be reformulated in terms of the solution $\eta$ to \eqref{eq:IPMsystem}, for an initial datum $\eta_{\rm{in}}(x,y)=\overline{\rho}_{\rm{eq}}(y)+\rho_{\rm{in}}(x,y)$ such that $\|\eta_{\rm{in}}(x,y)-\overline{\rho}_{\rm{eq}}(y)\|_X =\|\rho_{\rm{in}}(x,y)\|_{\dot H^{1-\tau} \cap \dot H^s} \ll 1$. Such reformulation yields the nonlinear asymptotic stability of equation \eqref{eq:IPMsystem} around the stratified steady state $\overline{\rho}_{\rm{eq}}(y)$, namely the setting of Elgindi \cite{ElgindiIPM} that proves the result in $H^{20}(\R^2)$, while in our Theorem \ref{Thm:ExistIPM} we only need that the initial perturbation $\rho_\text{in}(x,y) \in \dot H^{1-\tau}(\R^2) \cap \dot H^s$ with $s>3$.

\subsection{A new formulation}
A first key element of our approach is the use of the symmetrized variables introduced in 
\cite{BianchiniNatalini2DB}. With the notation 
\begin{align*}
    \Lambda=(-\Delta)^{1/2}=\sqrt{\xi_1^2+\xi_2^2},
\end{align*}
where $\xi=(\xi_1, \xi_2) \in \mathbb{R}^2$ denotes the frequency coordinate in Fourier space, 
we introduce the new unknown
\begin{equation}
\begin{aligned}\label{def:Omega}
    \Omega:= \Lambda^{-1} \omega,
\end{aligned}
\end{equation}
so that system \eqref{eq:system-oldvariable} rewrites as
\begin{equation}
\left\{
\begin{aligned} \label{eq:system-newvariable}
    &\d_t b -  \cR_1 \Omega = (\cR_2 \Omega, - \cR_1 \Omega) \cdot (\nabla b), \\
    &\d_t \Omega + \frac{\Omega}{\varepsilon} - \cR_1 b = \Lambda^{-1} [ (\cR_2 \Omega, - \cR_1 \Omega) \cdot (\nabla \Lambda\Omega)],
\end{aligned}\tag{2D-B}
\right.
\end{equation}
where $\cR_j$, $j\in\{1,2\}$ are the components of the Riesz transform as in \eqref{eq:riesz}.
Now, we introduce the auxiliary variable
\begin{align}\label{def:z}
z:=  \Omega - \varepsilon \cR_1 b. 
\end{align}
The system in $(b, z)$ then reads as follows
\begin{equation}
\left\{
\begin{aligned}
    &\d_t b - \varepsilon\cR_1^2 b  = \cR_1 z + (\cR_2 \Omega, - \cR_1 \Omega) \cdot \nabla b, \\
    &\d_t z + \frac{z}{\varepsilon}={-}\varepsilon \cR_1^2 \Omega {-} \varepsilon \cR_1 [(\cR_2 \Omega, -\cR_1\Omega) \cdot \nabla b] + \Lambda^{-1} [(\cR_2 \Omega, -\cR_{\rob{1}}\Omega) \cdot (\nabla \Lambda \Omega)]. \label{eq:system-z2}
\end{aligned}
\right.
\end{equation}

Such formulation partially diagonalizes the linear part of the system, except two linear terms in the right-hand side of \eqref{eq:system-z2} that in the energy estimates are absorbed by the left-hand side. This allows us to avoid hypocoercivity techniques \cite{BZ,CBD2}, obtaining a priori estimates simply based on the energy method.
We point out that these a priori estimates are \emph{uniform} in the vanishing parameter $\varepsilon$, which is a key point to justify the relaxation towards \eqref{eq:IPM}.

 The use of the good unknown $z=\Omega - \varepsilon\cR_1 b$ is inspired both by the work of Hoff and Haspot in \cite{Hoff,Boris}, where the authors introduce the effective velocity for the compressible Navier-Stokes equations and the results of Crin-Barat and Danchin in \cite{CBD1,CBD3,ThesisCB} on partially dissipative hyperbolic systems.
In the aforementioned works, this reformulation is only possible in some specific frequency regime (high frequencies for Navier-Stokes and low frequencies for partially dissipative systems) where the eigenvalues of the linear operator are real. In the present paper, the particular form of the Riesz transform (an operator of degree 0) allows us to use such diagonalization in all frequencies as the eigenvalues $\lambda_\pm$ of the linear part of system \eqref{eq:system-newvariable}
\begin{align*}
    \lambda_{\pm}=\frac{1}{2 \varepsilon} \pm \frac{1}{2 \varepsilon } \sqrt{1-\frac {4 \varepsilon^2 \xi_1^2}{|\xi|^2}}
\end{align*}
are real in the whole frequency space for any $\varepsilon \le 1/2$, so that the variable $z$ in \eqref{def:z} is a good unknown in all frequency regimes.


\subsection{Formal justification of the relaxation limit as $\varepsilon\to0$ and main results.}
Taking inspiration from the theory of partially dissipative systems (see for instance \cite{BHN, mar0,CoulombelGoudon,mar1,hsiao1,Junca,XuWang,CoulombelLin} and references therein), one can expect to rigorously justify the relaxation limit from \eqref{eq:system-newvariable} towards \eqref{eq:IPM} as $\varepsilon \to 0$, by applying the following scaling:
\begin{equation} \label{DiffuRescale}
(\widetilde{b}^\varepsilon,\widetilde{\Omega}^\varepsilon)(\tau,x)
\triangleq(b,\frac{\Omega}{\varepsilon})(t,x)
\with \tau=\varepsilon t.
\end{equation}
The system \eqref{eq:system-newvariable} in the scaled unknowns  $(\widetilde{b}^\varepsilon,\widetilde{\Omega}^\varepsilon)$ reads as follows:
\begin{equation}\label{eq:bouss}
\left\{
\begin{aligned}
    \pa_t \widetilde{b}^\varepsilon - \cR_1 \widetilde{\Omega}^\varepsilon & = (\cR_2 \widetilde{\Omega}^\varepsilon) \pa_x \widetilde{b}^\varepsilon - (\cR_1 \widetilde{\Omega}^\varepsilon) \pa_y \widetilde{b}^\varepsilon, \\
    \varepsilon^2\pa_t \widetilde{\Omega}^\varepsilon - \cR_1 \widetilde{b}^\varepsilon + \widetilde{\Omega}^\varepsilon&=\varepsilon^2\Lambda^{-1} [(\cR_2 \wt \Omega^\varepsilon, - \cR_1 \wt\Omega^\varepsilon) \cdot (\nabla \Lambda\wt\Omega^\varepsilon)].
\end{aligned}
\right.
\end{equation}

$\bullet$ Our first result is a systematic and improved proof of the global-in-time well-posedness, with uniform estimates in the relaxation parameter $\varepsilon \leq \frac{1}{2}$, of the above system with initial data $(b_{\rm{in}}, \Omega_{\rm{in}}) \in \dot H^{1-\tau} \cap \dot H^s$ for any $0 < \tau < 1$ and $s \ge 3+\tau$. The approach is based on the use of the anisotropic Littlewood-Paley decomposition that allows to capture the (anisotropic) nature of the equation in a nearly optimal way. 
For references on the use of anisotropic Besov spaces in the analysis of incompressible flows we refer to \cite{klaus} and we mention \cite{elgindi2} for a study of the effect of anisotropy (the Riesz transform) in low regularity.
\vspace{3mm}

Sending $\varepsilon\to0$ in system \eqref{eq:bouss}, one formally obtains that $\widetilde \Omega^\varepsilon \to \Omega$ and $\widetilde b^\varepsilon \to \rho$, where $\rho$ satisfies the Incompressible Porous Media equation \eqref{eq:IPM} and the Darcy law
\begin{align}\label{eq:rho}
    \Omega+\cR_1 \rho=0.
\end{align}

We improperly call \emph{diffusive scaling} the change of coordinates \eqref{DiffuRescale}. Of course it is not the usual diffusive scaling $(t/\varepsilon^2, x/\varepsilon, y/\varepsilon)$ under which the heat equation is invariant, but inserting \eqref{eq:rho} into the linear part of the equation for $\widetilde b^\varepsilon$ in \eqref{eq:bouss} yields 
\begin{align*}
    \partial_t \rho - \cR_1^2 \rho =0,
\end{align*}
where $\cR_1^2=(-\Delta)^{-1} \partial_{xx}$ is a partially \emph{dissipative operator}.

$\bullet$ Our next result is a mathematical proof of the relaxation limit of $\eqref{eq:bouss}$ towards \eqref{eq:IPM} as $\varepsilon \to 0$.
To the best of our knowledge, the relaxation approximation of \eqref{eq:IPM} provided by  \eqref{eq:system-newvariable} is new as well as its rigorous justification.

\vspace{3mm}
On the limit system \eqref{eq:IPM}, the existence of global-in-time smooth solutions to \eqref{eq:IPM} with small data (or, equivalently, the asymptotic stability of \eqref{eq:IPMsystem} around the stratified steady state) has been first proved by Elgindi in \cite{ElgindiIPM}, both in the full space $\R^2$ and in $\mathbb{T}^2$. In particular,
Elgindi shows that the stratified steady state $\overline{\rho}_{\rm{eq}}(y)=\rho_0-y$ is asymptotically stable in ${ H}^s(\R^2)$, for $s\ge 20$, with the additional integrability assumption that initial perturbations $\rho_{\text{in}}(x,y)$ belong to the $L^1$-based Sobolev space $W^{4,1}$ (this is also a key hypothesis to use dispersion effects in \cite{klaus1}).
Although being groundbreaking, the result in \cite{ElgindiIPM} requires very high regularity of solutions. 

$\bullet$ As a byproduct of the relaxation limit, in this article we provide a new proof of existence of global-in-time smooth solutions to \eqref{eq:IPM}, only assuming that the initial datum  $\rho_{\text{in}}(x,y) \in \dot H^{1-\tau} \cap \dot H^s$ for any $0 < \tau < 1$ and $s \ge \tau+3$.

\medbreak\noindent{\bf Acknowledgments.}  TCB is partially  supported by the European Research Council (ERC) under the European Union's Horizon 2020 research and innovation programme (grant agreement NO: 694126-DyCon). RB is partially supported by the GNAMPA group of INdAM and the PRIN project 2020 \emph{Nonlinear evolution PDEs, fluid dynamics and transport equations: theoretical foundations and applications}.

RB thanks \'Angel Castro for several useful discussions on the (IPM) equation and Klaus Widmayer for pointing out the use of anisotropic Besov spaces in fluid-dynamics problems.

\section{Main results}
Recalling the notation $\La=(-\Delta)^\frac 12$, we introduce the homogeneous Sobolev space for $s \in \R$
\begin{align}
    \|f\|_{\dot H^s}= \|\La^s f\|_{L^2(\mathbb{R}^2)}=\left(\int_{\mathbb{R}^2} |\xi|^{2s} |\widehat{f}(\xi)|^2 \, d\xi\right)^\frac{1}{2}. 
\end{align}
We also use the notation, for any $s, s' \in \R$,
\begin{align}
    \|f\|_{\dot H^s\cap \dot H^{s'}} = \|f\|_{\dot H^s}+\|f\|_{\dot H^{s'}}.
\end{align}

Our main results hold in Sobolev spaces, however we will rely on the properties of \emph{anisotropic} Besov spaces to obtain some estimates that play a crucial role in our proof. Such anisotropic spaces allow to perfectly capture the anisotropic nature of the 2D Boussinesq system. Similar approaches have been applied to the MHD system by Lin and Zhang in \cite{LinZhang3DMHD,LinZhang2DMHD} and to the incompressible Navier-Stokes equations by Chemin and Zhang \cite{CheminZhangAniso} and Paicu \cite{PaicuAniso}. More recently, an approach based on anisotropic Besov spaces has been developed for 3D rotating incompressible fluids by Guo, Pausader and Widmayer \cite{klaus}.

 Our first result concerns the uniform global well-posedness of system \eqref{eq:system-oldvariable}. For any $r>0$, we define the following functional
 \begin{align}\label{def:Mmu}
  \mathcal{M}_r (T)&:=\|(b,\Omega,z)\|_{L^\infty_T({\dot H}^r)}+\sqrt\varepsilon \|\cR_1b\|_{L^2_T({\dot H}^r)}+ \frac{1}{\sqrt{\varepsilon}}\|\Omega\|_{L^2_T({\dot H}^r)}+\frac{1}{\sqrt\varepsilon} \|z\|_{L^2_T({\dot H}^r)}.
 \end{align}

 We obtain the following.
 \begin{Thm}[Global existence for \eqref{eq:system-newvariable}]\label{Thm:Exist}
For any $0<\varepsilon \le 1/2$ and any $0< \tau < 1$, let $s\ge 3+\tau$. For any couple of initial data $(b_{\rm{in}},\Omega_{\rm{in}})\in {\dot H}^{1-\tau}(\R^2) \cap  {\dot H}^s(\R^2)$,
there exists a constant value $0<\delta_0 \ll 1$ such that, under the following assumption
 \begin{align}\label{eq:smallness-thm}
     \mathcal{M}(0)=\|(b_{\rm{in}},\Omega_{\rm{in}})\|_{\dot H^{1-\tau}\cap\dot{H}^s}\leq \delta_0,
 \end{align}
there exists a unique global-in-time smooth solution $(b,\Omega)$ to system \eqref{eq:system-newvariable} satisfying the following inequality for all times $t>0$
 \begin{align}\label{def:X}
   X(t):=\mathcal{M}(t)+\|\nabla u_2\|_{L^1_T(L^\infty)}+\|\La u_2\|_{L^1_T(L^\infty)} \leq \mathcal{M}(0),
\end{align}
where 
\begin{align}
    u_2=(-\Delta)^{-1/2} \partial_x \Omega=\cR_1 \Omega,
\end{align}
and 
\begin{align}\label{def:M}
  \mathcal{M}(t):=\mathcal{M}_{1-\tau}(t)+\mathcal{M}_{s}(t).  
\end{align}
\end{Thm}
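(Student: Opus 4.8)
The plan is to run a continuity (bootstrap) argument on the functional $X(t)$ of \eqref{def:X}, whose core is a family of a priori estimates \emph{uniform in} $\varepsilon\le 1/2$. First I would record local-in-time existence, uniqueness and persistence of regularity for $(b,\Omega)$, or equivalently for $(b,z)$ with $z=\Omega-\varepsilon\cR_1 b$ as in \eqref{eq:system-z2}, for data in ${\dot H}^{1-\tau}\cap{\dot H}^s$, by a standard Friedrichs mollification / fixed-point scheme together with a continuation criterion governed by $X$. Then, on the maximal existence interval $[0,T]$, I would make the bootstrap assumption $X(t)\le 2C_0\mathcal M(0)$ for a suitable universal constant $C_0$ and aim to improve it to $X(t)\le C_0\mathcal M(0)$; together with $\mathcal M(0)\le\delta_0$ small this continues the solution and yields \eqref{def:X} (the clean form $X(t)\le\mathcal M(0)$ being recovered after optimising constants).

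\emph{Uniform energy estimates.} Run ${\dot H}^r$ estimates, $r\in\{1-\tau,s\}$, on the partially diagonalised system \eqref{eq:system-z2}: the damping in the $z$-equation gives the dissipation $\varepsilon^{-1}\|\La^r z\|_{L^2}^2$ and the term $-\varepsilon\cR_1^2 b$ in the $b$-equation gives $\varepsilon\|\La^r\cR_1 b\|_{L^2}^2$. The only linear terms to control are the coupling $\cR_1 z$ in the $b$-equation and $-\varepsilon\cR_1^2\Omega=-\varepsilon\cR_1^2 z-\varepsilon^2\cR_1^3 b$ in the $z$-equation; since $\|\cR_1\,\cdot\,\|_{L^2}\le\|\cdot\|_{L^2}$, the first is absorbed by Young's inequality into the two dissipations, the piece $-\varepsilon\cR_1^2 z$ has the favourable sign $-\varepsilon\|\cR_1\La^r z\|_{L^2}^2\le 0$, and $-\varepsilon^2\cR_1^3 b$ is absorbed using $\varepsilon\le 1/2$. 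Summing over $r$, integrating in time and recovering the $\Omega$-norms from $\Omega=z+\varepsilon\cR_1 b$, one gets $\mathcal M(T)\lesssim\mathcal M(0)+\mathcal N_T^{1/2}$ with $\mathcal N_T$ collecting the quadratic nonlinearities. Two structural facts are then used repeatedly: the strongly damped quantities $\Omega$, $z$, $u_2=\cR_1\Omega$ lie in $L^2_T{\dot H}^r$ with a $\sqrt\varepsilon$ weight (already built into $\mathcal M_r$), while the weak dissipation on $b$ only controls $\partial_x b=\La\cR_1 b$ in $L^2_T{\dot H}^{r-1}$ up to a factor $\varepsilon^{-1/2}$.

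\emph{Nonlinear estimates.} Expand every product by Bony's paraproduct, using \emph{anisotropic} Littlewood--Paley blocks $\ddj^h\ddk^v$ so that the anisotropy of $\widehat{\cR_1}=\xi_1/|\xi|$ is visible. Time integrability is gained either (i) by pairing two factors carrying $L^2_T$-dissipative norms whose $\varepsilon$-weights cancel --- the weak one on $\partial_x b$ against the strong one on $\Omega$ (or on $z$), the product of weights being $O(1)$: this is exactly what the good unknown $z$ buys, and it is why the transport structure $u_1\partial_x b+u_2\partial_y b$ is convenient, since $u_2=\cR_1\Omega$ carries the anisotropic gain of $\cR_1$ while $u_1$ (a $0$-order multiple of $\Omega$) does not and always multiplies $\partial_x b$, which the dissipation controls; or (ii) by pairing one factor equal to an $L^1_T(L^\infty)$ norm of $u_2$ or of its derivatives ($\nabla u_2$, $\La u_2$) against $L^\infty_T{\dot H}^r$ norms, with no $\varepsilon$-weights at all, hence trivially uniform. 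The divergence-free structure $\textbf{u}=\nabla^\perp\La^{-1}\Omega$ removes the top-order transport contribution; the commutators are handled with Kato--Ponce / Coifman--Meyer estimates, the $z$-nonlinearity being rewritten $\La^{-1}[\textbf{u}\cdot\nabla\La\Omega]=\textbf{u}\cdot\nabla\Omega-\La^{-1}[\La,\textbf{u}\cdot\nabla]\Omega$ to avoid a derivative loss, and the remaining low-regularity pieces are estimated by interpolating ${\dot H}^{1-\tau}$ with ${\dot H}^s$. This yields $\mathcal N_T\lesssim X(T)^3$, uniformly in $\varepsilon$.

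\emph{The integrated $L^\infty$ bound on $u_2$ --- the crux.} Write $u_2=\cR_1 z+\varepsilon\cR_1^2 b$ (from $z=\Omega-\varepsilon\cR_1 b$). The first summand inherits the damping $e^{-t/\varepsilon}$ of the $z$-equation, so its $L^1_T$ norm costs only a factor $\varepsilon$, and its $\dot B^0_{\infty,1}$ summation has room because $\cR_1 z$ carries the $\xi_1/|\xi|$ smallness. For the second, $b$ evolves under the degenerate semigroup $e^{t\varepsilon\cR_1^2}$ with symbol $e^{-t\varepsilon\xi_1^2/|\xi|^2}$, and on an anisotropic block $(\xi_1/|\xi|)^2\sim 2^{2\min(0,j-k)}$, so that
\begin{equation*}
\varepsilon\int_0^{\infty}\big\|\cR_1^2\,e^{t\varepsilon\cR_1^2}\,\ddj^h\ddk^v f\big\|_{L^\infty}\,dt\ \lesssim\ \big\|\ddj^h\ddk^v f\big\|_{L^\infty},
\end{equation*}
the factor $\varepsilon$ cancelling the $\varepsilon^{-1}2^{-2\min(0,j-k)}$ produced by the time integration, \emph{uniformly in} $\varepsilon$ --- this is the quantitative counterpart of the integrable $\sim t^{-3/2}$ decay of the vertical velocity, the extra decay beyond what the degenerate dissipation alone provides being generated precisely by the $\cR_1$ in $u_2$. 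Summing the anisotropic blocks with the extra factor $2^{\max(j,k)}$ coming from $\nabla$ or $\La$ and using Bernstein in $\R^2$, the low-frequency sum converges thanks to $\tau>0$, while the threshold $s\ge 3+\tau$ ensures convergence of the high-frequency sums here and in the commutator estimates; adding the Duhamel contribution of the nonlinearity (again at least quadratic) gives $\|\nabla u_2\|_{L^1_T(L^\infty)}+\|\La u_2\|_{L^1_T(L^\infty)}\lesssim\mathcal M(0)+X(T)^2$. Collecting the three estimates, $X(T)\lesssim\mathcal M(0)+X(T)^{3/2}+X(T)^2$, so for $\delta_0$ small the bootstrap closes and global existence with \eqref{def:X} follows; uniqueness follows from an energy estimate on the difference of two solutions at a lower regularity level. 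The main obstacle is exactly this last step --- obtaining the integrable-in-time $L^\infty$ decay of the vertical velocity with only ${\dot H}^{1-\tau}\cap{\dot H}^s$, $s>3$, regularity --- which forces the anisotropic Besov framework: one must exploit at once the degeneracy of the dissipation at $\xi_1=0$ through the $\xi_1/|\xi|$ factor carried by $u_2$, its non-degeneracy away from $\xi_1=0$, and the two-endpoint regularity budget; the secondary difficulty is keeping all constants uniform in $\varepsilon$, which is precisely what the diagonalisation $z=\Omega-\varepsilon\cR_1 b$ and the $\varepsilon$-weights in $\mathcal M_r$ are designed to achieve.
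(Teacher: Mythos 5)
Your architecture coincides with the paper's: local existence plus bootstrap on $X$, uniform $\dot H^{1-\tau}\cap\dot H^s$ energy estimates on the diagonalised $(b,z)$ system with the coupling terms absorbed by the two dissipations, and an anisotropic Littlewood--Paley argument to get the integrable-in-time $L^\infty$ decay of $u_2$, $\nabla u_2$, $\Lambda u_2$ by trading the $\varepsilon^{-1}2^{-2\min(0,j-k)}$ produced by integrating the degenerate symbol $\varepsilon\xi_1^2/|\xi|^2$ against the $\xi_1/|\xi|$ factors carried by $u_2=\cR_1 z+\varepsilon\cR_1^2 b$. Your Duhamel/semigroup formulation of that last step is just the mild-solution version of the paper's energy estimates on the localized blocks $\ddj\ddq^h$ (their inequality \eqref{Bern} plus Lemma \ref{SimpliCarre}); the mechanism and the bookkeeping of the two-endpoint regularity ($\tau>0$ for low frequencies, $s\ge 3+\tau$ for high) are identical.

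There is, however, one concrete step where your plan as written would fail: the commutator $\mathcal{C}_{1,2}=[\Lambda^s,\cR_1\Omega]\,\partial_y b$ in the $b$-equation. The standard Kato--Ponce/Coifman--Meyer bound gives
\begin{equation*}
\|[\Lambda^s,\cR_1\Omega]\partial_y b\|_{L^2}\ \lesssim\ \|\nabla\cR_1\Omega\|_{L^\infty}\|\partial_y b\|_{\dot H^{s-1}}\ +\ \|\cR_1\Omega\|_{\dot H^s}\,\|\partial_y b\|_{L^\infty}.
\end{equation*}
The first term falls into your case (ii) ($\|\nabla u_2\|_{L^1_T(L^\infty)}$ against $L^\infty_T$ norms), but the second falls into neither of your two mechanisms: $\|\partial_y b\|_{L^\infty}$ is only controlled in $L^\infty_T$ (the weak dissipation sees $\partial_x b$, not $\partial_y b$), and $\|\cR_1\Omega\|_{\dot H^s}$ gains nothing over $\|\Omega\|_{\dot H^s}$ in isotropic Sobolev norms, so $\int_0^T\|\Omega\|_{\dot H^s}\|\partial_y b\|_{L^\infty}\|b\|_{\dot H^s}\,dt$ only admits a bound growing like $\sqrt{T}$ from the available $L^2_T$ control of $\Omega$. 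Interpolation between $\dot H^{1-\tau}$ and $\dot H^s$ does not repair this, since the obstruction is the time integrability, not the spatial regularity. The paper circumvents it with the higher-order fractional Leibniz expansion of Li (Lemma \ref{lem:Lin} with $s_1=1$, $s_2=s-1$): the remainder is controlled by $\|\Lambda\cR_1\Omega\|_{\mathrm{BMO}}\|\partial_y b\|_{\dot H^{s-1}}$ (your case (ii)), and in the explicit lower-order terms $\nabla^\beta\partial_y b\,\Lambda^{s,\beta}\cR_1\Omega$ one writes $\cR_1=\Lambda^{-1}\partial_x$ and integrates by parts in $x$ so that every resulting term carries a $\partial_x b$ or $\cR_1\dot b$ factor, which the weak dissipation does control in $L^2_T$ (your case (i)). You should either import this expansion-plus-integration-by-parts, or carry out the $\dot H^s$ commutator estimate itself in the anisotropic paraproduct calculus so that the $\xi_1/|\xi|$ gain of $\cR_1\Omega$ is visible at that stage; without one of these, the energy estimate in Proposition \ref{Prop:apriori-Sobolev} does not close.
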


\begin{Rmq}[On the expression of $\mathcal{M}(t)$]
The functional $\mathcal{M}(t)$ in \eqref{def:M} is the sum of two terms, i.e. $\mathcal{M}_{1-\tau}(t)$ and $\mathcal{M}_{s}(t)$, defined in \eqref{def:Mmu}, which are both crucial for the embedding into anisotropic Besov spaces ${\dot H}^s \cap {\dot H}^{1-\tau}\subset B^{s_1, s_2}$ in Lemma \ref{BesovSobolev}. This is a key point: in fact, although the core of our analysis will be developed in anisotropic Besov spaces $B^{s_1, s_2}$ (introduced in Section \ref{sec:mainr}), however the final result is stated in Sobolev regularity precisely thanks to the embedding ${\dot H}^s \cap {\dot H}^{1-\tau} \subset B^{s_1, s_2}$.
\end{Rmq}

\begin{Rmq}[Our setting and comparison with the result of Wan \cite{WanBoussinesq}]
The global well-posedness of the 2D Boussinesq system with damping \eqref{eq:B-vel} in $\R^2$ was first established by Wan in \cite{WanBoussinesq}. Besides providing a more systematic and shorter proof (of Theorem \ref{Thm:Exist}) that exploits the anisotropic nature of the system, the present work improves several points.
\begin{itemize}
    \item The regularity assumptions are lowered: \cite{WanBoussinesq} requires $b \in \dot H^{-1} \cap \dot H^{s_0}, \, \omega \in \dot H^{-2} \cap \dot H^{s_0-1}$ with $s_0 \ge 6$, while here we only need $b \in \dot H^{1-\tau} \cap \dot H^{s}, \, \omega \in \dot H^{-\tau} \cap \dot H^{s-1} $ for any $0 <\tau < 1$ and $s \ge 3+\tau$.
    \item While \cite{WanBoussinesq} only bounds $\|u_2\|_{L^\frac{4}{3}_T  L^\infty}$ relying on spectral analysis and sophisticated estimates, here we provide a control of $\|u_2\|_{L^1_T L^\infty}$ thanks to the anisotropic Littlewood-Paley approach; notice that the $L^1_T$ control of $\|u_2\|_{L^\infty}$ with $u_2=\cR_1 \Omega$ is natural as $\|\cR_1 \Omega\|_{L^\infty}$ is expected to decay  at integrable rate like $\nabla u_2$ (see Proposition 3.4 in \cite{ElgindiIPM}, \cite{BianchiniNatalini2DB}, and Remark 4.2 in \cite{WanBoussinesq}).
    \item In stark contrast with \cite{WanBoussinesq} where $b \in \dot H^{-1}$ (see Remark 4.1 in \cite{WanBoussinesq}), we stress that here we do not need any unnatural assumption on $b$, which simply belongs to $\dot H^{1-\tau} \cap \dot H^s, \, s \ge 3+\tau, \; 0 < \tau < 1$.
    \item Finally, it is interesting to point out that in our framework $b \in \dot H^{1-\tau} \cap \dot H^{s}$ (homogeneous spaces with positive indexes) is not required to be square integrable, which is natural as both the Boussinesq equations \eqref{eq:system-oldvariable} and the incompressible porous media equation \eqref{eq:IPM} are invariant by the transformation $b\rightarrow b+C$, for any constant $C \in \R$.
\end{itemize}

\end{Rmq}

\begin{Rmq}[On the $\dot H^{1-\tau}$ estimate]
As just remarked before, we only take $(b_\text{in}, \Omega_\text{in}) \in \dot H^{1-\tau} \cap \dot H^s$ without involving any negative Sobolev space (in \cite{WanBoussinesq} $b_\text{in} \in \dot H^{-1} \cap \dot H^{s_0}$ and $\omega_{\text{in}} \in \dot H^{-2} \cap H^{s_0-1}$ with $s_0 \ge 6$).  
However, we point out that there is no need of $\omega \in \dot H^{-\tau}$ (i.e. $\Omega \in \dot H^{1-\tau}$) in the first part, namely the proof of Proposition \ref{Prop:apriori-Sobolev}. The assumption  $\Omega_{\rm{in}} \in \dot H^{1-\tau}$ is only used in the anisotropic Besov part (more precisely in the control of the $Y(t)$ functional in \eqref{YFunc}) to bound $\|\Omega\|_{L^\infty_T(B^{\frac 12, \frac 12})}$ by means of the embedding (Lemma \ref{BesovSobolev}) ${\dot H}^s \cap {\dot H}^{1-\tau} \subset B^{\frac12,\frac12}$, and therefore to control $\mathcal{M}_{1-\tau}$.
\end{Rmq}

The result below concerns the justification of the relaxation limit.

\begin{Thm}[Relaxation limit]\label{Thm:Relax}
 Let the hypotheses of Theorem \ref{Thm:Exist} be fulfilled and let $(\wt b^\varepsilon,\wt \Omega^\varepsilon)$ be the unique solution, scaled with \eqref{DiffuRescale}, associated to the initial data $(b_{\rm{in}},\Omega_{\text{in}})$ as in Theorem \ref{Thm:Exist}.

Then, for any $0<s'<s$ and $0<\tau<\tau'<1$, one has the limit as $\varepsilon\to0$, 
 $$\wt b^\varepsilon\to\rho \text{ strongly in }C([0,T],\dot{H}^{1-\tau'}_{loc}\cap\dot{H}^{s-s'}_{loc}),$$
 where $\rho$ is the unique solution of \eqref{eq:IPM} associated to the initial data $b_\text{in}$.
 Moreover, it holds
 $$\|\rho(\cdot,t)\|_{\dot H^{1-\tau}\cap\dot H^s}\leq C\|(b_{\rm{in}},\Omega_{\rm{in}})\|_{\dot H^{1-\tau}\cap\dot H^s},$$
 where the constant $C$ is independent of $\varepsilon$.\\
 Finally, we recover the Darcy law in the following sense:
 $$\|\wt\Omega^\varepsilon-\cR_1\wt b^\varepsilon\|_{L^1_T(B^{\frac32,\frac12}\cap B^{\frac12,\frac12})}\leq \varepsilon \mathcal{M}(0).$$
\end{Thm}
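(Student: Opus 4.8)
\textbf{Proof plan for Theorem \ref{Thm:Relax}.}

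The plan is to treat the three assertions in order, bootstrapping on the uniform-in-$\varepsilon$ bounds supplied by Theorem \ref{Thm:Exist}. First, I would record that the scaled solutions $(\wt b^\varepsilon,\wt\Omega^\varepsilon)$ satisfy, uniformly in $\varepsilon\le 1/2$, the bound $\mathcal{M}(T)\lesssim\mathcal{M}(0)$ together with the extra dissipation $\tfrac{1}{\sqrt\varepsilon}\|\wt\Omega^\varepsilon-\cR_1\wt b^\varepsilon\|_{L^2_T(\dot H^r)}+\tfrac{1}{\sqrt\varepsilon}\|\wt\Omega^\varepsilon\|_{L^2_T(\dot H^r)}\lesssim\mathcal M(0)$ for $r\in\{1-\tau,s\}$ --- this is just the statement of Theorem \ref{Thm:Exist} read through the scaling \eqref{DiffuRescale}, which turns the $z$-dissipation into $\|\wt\Omega^\varepsilon-\varepsilon^2\cR_1\wt b^\varepsilon/\varepsilon\cdots\|$; care must be taken to check that the scaled auxiliary variable is exactly $\wt z^\varepsilon=\wt\Omega^\varepsilon-\cR_1\wt b^\varepsilon$ after the factor $\varepsilon$ in \eqref{def:z} is absorbed. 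Because these spaces have positive (and negative-of-small-size) indices and the bounds are $\varepsilon$-independent, by Banach--Alaoglu the family $\{\wt b^\varepsilon\}$ has a weak-$*$ limit $\rho$ in $L^\infty_T(\dot H^{1-\tau}\cap\dot H^s)$, which immediately gives the quantitative bound $\|\rho\|_{\dot H^{1-\tau}\cap\dot H^s}\le C\|(b_{\rm in},\Omega_{\rm in})\|_{\dot H^{1-\tau}\cap\dot H^s}$ by lower semicontinuity of the norm under weak-$*$ convergence.

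Next, for the Darcy law I would go back to the second equation in \eqref{eq:bouss}, rewritten as $\wt\Omega^\varepsilon-\cR_1\wt b^\varepsilon=-\varepsilon^2\partial_t\wt\Omega^\varepsilon+\varepsilon^2\Lambda^{-1}[(\cR_2\wt\Omega^\varepsilon,-\cR_1\wt\Omega^\varepsilon)\cdot(\nabla\Lambda\wt\Omega^\varepsilon)]$. Denoting $\wt z^\varepsilon:=\wt\Omega^\varepsilon-\cR_1\wt b^\varepsilon$, one has $\partial_t\wt\Omega^\varepsilon=\cR_1\partial_t\wt b^\varepsilon+\partial_t\wt z^\varepsilon$; substituting the first equation of \eqref{eq:bouss} for $\partial_t\wt b^\varepsilon$ one sees $\varepsilon^2\partial_t\wt\Omega^\varepsilon=\varepsilon^2\cR_1^2\wt\Omega^\varepsilon+\varepsilon^2\cR_1[(\cR_2\wt\Omega^\varepsilon)\partial_x\wt b^\varepsilon-(\cR_1\wt\Omega^\varepsilon)\partial_y\wt b^\varepsilon]+\varepsilon^2\partial_t\wt z^\varepsilon$, and the last term, using $\wt z^\varepsilon$'s own evolution equation obtained by the same manipulation as in \eqref{eq:system-z2} (the scaled analogue), carries a further $\varepsilon^2/\varepsilon$ decay. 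All the terms on the right are then estimated in $L^1_T(B^{3/2,1/2}\cap B^{1/2,1/2})$: the linear $\varepsilon^2\cR_1^2\wt\Omega^\varepsilon$ term costs $\varepsilon^2\|\wt\Omega^\varepsilon\|_{L^1_T(B^{3/2,1/2}\cap B^{1/2,1/2})}$, which by the embedding of Lemma \ref{BesovSobolev} and the $\tfrac{1}{\sqrt\varepsilon}\|\wt\Omega^\varepsilon\|_{L^2_T}$ bound (plus Cauchy--Schwarz in time on $[0,T]$, producing a factor $\sqrt T$) is $\lesssim\varepsilon^{3/2}\sqrt T\,\mathcal M(0)\le\varepsilon\mathcal M(0)$ after adjusting constants; the nonlinear terms are handled with the same product/commutator estimates used in the proof of Theorem \ref{Thm:Exist}, each bilinear term being quadratic in quantities controlled by $\mathcal M$ and hence bounded by $\varepsilon^2\mathcal M(0)^2\le\varepsilon\mathcal M(0)$ under the smallness \eqref{eq:smallness-thm}. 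This yields the claimed $\|\wt\Omega^\varepsilon-\cR_1\wt b^\varepsilon\|_{L^1_T(B^{3/2,1/2}\cap B^{1/2,1/2})}\le\varepsilon\mathcal M(0)$.

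Finally, for the strong convergence I would upgrade weak-$*$ to strong local convergence by an Aubin--Lions--Simon compactness argument combined with interpolation. From the first equation of \eqref{eq:bouss}, $\partial_t\wt b^\varepsilon=\cR_1\wt\Omega^\varepsilon+(\cR_2\wt\Omega^\varepsilon)\partial_x\wt b^\varepsilon-(\cR_1\wt\Omega^\varepsilon)\partial_y\wt b^\varepsilon$ is bounded in, say, $L^2_T(\dot H^{s-2}_{loc})$ uniformly in $\varepsilon$ (the linear part even in $L^2_T(\dot H^{s-1})$ thanks to the $\tfrac{1}{\sqrt\varepsilon}\|\wt\Omega^\varepsilon\|_{L^2_T(\dot H^s)}$ bound, actually giving more), while $\wt b^\varepsilon$ itself is bounded in $L^\infty_T(\dot H^s\cap\dot H^{1-\tau})$; on any ball $B_R$ the embedding $\dot H^s\hookrightarrow\hookrightarrow \dot H^{s-s'}(B_R)$ is compact for $s'>0$, so Aubin--Lions gives $\wt b^\varepsilon\to\rho$ strongly in $C([0,T];\dot H^{s-s'}_{loc})$; the analogous statement in $\dot H^{1-\tau'}_{loc}$ for $\tau<\tau'$ follows by interpolating the strong $\dot H^{s-s'}_{loc}$ convergence against the uniform $\dot H^{1-\tau}$ bound, or directly by the same Aubin--Lions argument at the lower index. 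Passing to the limit in the weak formulation of \eqref{eq:bouss}: in the $\wt b^\varepsilon$-equation the term $\varepsilon^2\partial_t\wt\Omega^\varepsilon\to0$ in $\mathcal D'$ because $\wt\Omega^\varepsilon$ is uniformly bounded in $L^\infty_T(\dot H^s)$ (so $\varepsilon^2\wt\Omega^\varepsilon\to0$ strongly and one differentiates the zero), and the Darcy-law estimate just proved shows $\wt\Omega^\varepsilon-\cR_1\wt b^\varepsilon\to0$ in $L^1_T(B^{1/2,1/2})$, hence $\wt\Omega^\varepsilon\to-\cR_1\rho$; substituting $\wt\Omega^\varepsilon\rightharpoonup-\cR_1\rho$ into the $\wt b^\varepsilon$-equation and using the \emph{strong} local convergence of $\wt b^\varepsilon$ (and of $\nabla\wt b^\varepsilon$ after one more index of compactness) to pass to the limit in the quadratic terms $(\cR_2\wt\Omega^\varepsilon)\partial_x\wt b^\varepsilon$ etc.\ --- a product of a weakly convergent factor with a strongly convergent factor --- yields exactly $\partial_t\rho-\cR_1^2\rho=(\cR_2\cR_1\rho,-\cR_1^2\rho)\cdot\nabla\rho$, i.e.\ \eqref{eq:IPM}, with initial datum $b_{\rm in}$; uniqueness of this limiting solution in the given class (which can be established by a standard energy estimate on the difference of two solutions, in $L^2$ say, using the $L^1_T L^\infty$ control of $\nabla u_2$ from Theorem \ref{Thm:Exist} to close the Gronwall) shows the whole family converges, not merely a subsequence. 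The main obstacle I expect is the limit passage in the nonlinear terms at the low-regularity endpoint $\dot H^{1-\tau'}_{loc}$: one must be careful that the product estimates used to identify the limit are consistent with merely local strong convergence and with the anisotropic Besov framework, and in particular that no boundary/tail contribution at spatial infinity obstructs the weak-strong product convergence --- this is handled by cutting off to balls and exploiting that $\partial_t\wt b^\varepsilon$ enjoys a uniform bound in a negative-enough local Sobolev space to run Aubin--Lions at every index between $1-\tau$ and $s$.
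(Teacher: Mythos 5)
Your overall architecture is the same as the paper's: read the uniform bounds of Theorem \ref{Thm:Exist} through the scaling \eqref{DiffuRescale}, extract a weak-$*$ limit by Banach--Alaoglu with the quantitative bound by lower semicontinuity, upgrade to strong convergence in $C([0,T];\dot H^{1-\tau'}_{loc}\cap\dot H^{s-s'}_{loc})$ via uniform bounds on $\partial_t\wt b^\varepsilon$ and Aubin--Lions on balls, identify the limit equation by weak--strong product convergence, and use uniqueness of the limit to get convergence of the whole family. Your observation that the scaled auxiliary variable is exactly $\wt z^\varepsilon=\wt\Omega^\varepsilon-\cR_1\wt b^\varepsilon=z/\varepsilon$ is correct and is the key bookkeeping point.

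The one step where your route differs from the paper's --- and where it has a genuine flaw as written --- is the Darcy-law estimate. You re-derive $\wt\Omega^\varepsilon-\cR_1\wt b^\varepsilon$ from the $\wt\Omega^\varepsilon$-equation and bound the linear contribution $\varepsilon^2\cR_1^2\wt\Omega^\varepsilon$ in $L^1_T$ by Cauchy--Schwarz in time, producing $\varepsilon^{3/2}\sqrt T\,\mathcal M(0)$; the claim that this is $\le\varepsilon\mathcal M(0)$ ``after adjusting constants'' fails for $T\gtrsim\varepsilon^{-1}$, whereas the stated estimate is uniform in $T$ (the solutions are global). The clean argument, which is what the paper implicitly uses, is to read the bound off the anisotropic functional \eqref{YFunc}: the existence proof already gives $\frac1\varepsilon\|z\|_{L^1_t(B^{\frac32,\frac12}\cap B^{\frac12,\frac12})}\leq Y\lesssim\mathcal M(0)$ in the \emph{unscaled} time, and since $\wt z^\varepsilon(\tau,\cdot)=\varepsilon^{-1}z(\tau/\varepsilon,\cdot)$ the amplitude factor $\varepsilon^{-1}$ exactly cancels the Jacobian $\varepsilon$ of the time change, so $\|\wt z^\varepsilon\|_{L^1_\tau}=\|z\|_{L^1_t}\leq\varepsilon\,\mathcal M(0)$ with no $T$-dependence. (If you insist on arguing from the equation, replace the Cauchy--Schwarz step by the available $L^1_t$ control $\|\Omega\|_{L^1_t(B^{\frac12,\frac32})}\leq Y$ together with the embedding $B^{\frac12,\frac32}\subset B^{\frac32,\frac12}$ of Lemma \ref{InclusBesov}.) The rest of your plan --- including the treatment of $\varepsilon^2\partial_t\wt\Omega^\varepsilon\to0$ in $\mathcal D'$ and the weak--strong passage in the quadratic terms --- is consistent with the paper's proof.
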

\begin{Rmq}[On the Darcy law]
Note that applying the operator $\nabla^\perp \cdot$ to the velocity $\textbf{u}$ in \eqref{eq:IPMsystem} with $\kappa=g=1$ (and replacing the notation $\eta$ by $\rho$) yields \begin{align*}
    \omega=\nabla^\perp \cdot \textbf{u}=\d_x \rho,
\end{align*}
which in terms of the variables $(\Omega, \rho)$ reads exactly $\Omega=\cR_1 \rho$.
\end{Rmq}

Our analysis also provides a new proof of existence of global-in-time smooth solutions to the incompressible porous media equation \eqref{eq:IPM} for small data.
 
 \begin{Thm}[Existence for \eqref{eq:IPM}]\label{Thm:ExistIPM}
For any $0< \tau < 1$, let $s\ge 3+\tau$. For any initial datum $\rho_{\rm{in}}\in {\dot H}^{1-\tau}(\R^2) \cap  {\dot H}^s(\R^2)$,
there exists a constant value $0<\delta_0 \ll 1$ such that, under the assumption
 \begin{align*}
     \|\rho_{\rm{in}}\|_{\dot H^{1-\tau}\cap\dot{H}^s}\leq \delta_0,
 \end{align*}
there exists a unique global-in-time smooth solution $\rho$ to system \eqref{eq:IPM} satisfying the following inequality for all times $t>0$
 \begin{align*}
   X(t):=\|\rho\|_{L^\infty_T(\dot H^{1-\tau}\cap\dot{H}^s)}+\|\cR_1\rho\|_{L^2_T(\dot H^{1-\tau}\cap\dot{H}^s)}+\|(\nabla \cR_1^2 \rho,\La \cR_1^2 \rho)\|_{L^1_T(L^\infty)}\lesssim \|\rho_{\rm{in}}\|_{\dot H^{1-\tau}\cap\dot{H}^s}.
\end{align*}
\end{Thm}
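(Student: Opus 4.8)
\textbf{Proof proposal for Theorem \ref{Thm:ExistIPM}.}

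The plan is to obtain Theorem \ref{Thm:ExistIPM} as a direct byproduct of Theorems \ref{Thm:Exist} and \ref{Thm:Relax}, combined with a uniqueness argument for \eqref{eq:IPM} at the given regularity. First I would set $\Omega_{\rm{in}} := -\cR_1 \rho_{\rm{in}}$, so that the pair $(b_{\rm{in}}, \Omega_{\rm{in}}) = (\rho_{\rm{in}}, -\cR_1\rho_{\rm{in}})$ satisfies the Darcy constraint \eqref{eq:rho} at the initial time; since $\cR_1$ is bounded of order zero on every homogeneous Sobolev space, the smallness $\|\rho_{\rm{in}}\|_{\dot H^{1-\tau}\cap\dot H^s}\le\delta_0$ gives $\|(b_{\rm{in}},\Omega_{\rm{in}})\|_{\dot H^{1-\tau}\cap\dot H^s}\lesssim\delta_0$, so Theorem \ref{Thm:Exist} applies (after shrinking $\delta_0$) and produces, for each $\varepsilon\le 1/2$, a global solution $(b,\Omega)$ with the uniform bound $X(t)\le\mathcal M(0)$. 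Rescaling via \eqref{DiffuRescale} gives $(\wt b^\varepsilon,\wt\Omega^\varepsilon)$ solving \eqref{eq:bouss}, with $\varepsilon$-uniform estimates. Then Theorem \ref{Thm:Relax} yields a limit $\rho$, strong in $C([0,T],\dot H^{1-\tau'}_{loc}\cap\dot H^{s-s'}_{loc})$, solving \eqref{eq:IPM} with datum $\rho_{\rm{in}}$, together with the stated bound $\|\rho(t)\|_{\dot H^{1-\tau}\cap\dot H^s}\lesssim\|\rho_{\rm{in}}\|_{\dot H^{1-\tau}\cap\dot H^s}$.

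Next I would upgrade the functional inequality to the exact form claimed in the theorem. The bound on $\|\rho\|_{L^\infty_T(\dot H^{1-\tau}\cap\dot H^s)}$ is immediate from the previous paragraph. For the dissipative norm $\|\cR_1\rho\|_{L^2_T(\dot H^{1-\tau}\cap\dot H^s)}$ and the Lipschitz-type norm $\|(\nabla\cR_1^2\rho,\La\cR_1^2\rho)\|_{L^1_T(L^\infty)}$, I would pass to the limit in the corresponding uniform bounds for the rescaled Boussinesq solution: from \eqref{def:Mmu} the term $\sqrt\varepsilon\|\cR_1 b\|_{L^2_T(\dot H^r)}$ rescales, under \eqref{DiffuRescale}, to a clean $\|\cR_1\wt b^\varepsilon\|_{L^2_\tau(\dot H^r)}$ controlled by $\mathcal M(0)$ uniformly in $\varepsilon$; by weak lower semicontinuity of the norm this survives in the limit to give $\|\cR_1\rho\|_{L^2_T(\dot H^{1-\tau}\cap\dot H^s)}\lesssim\|\rho_{\rm{in}}\|_{\dot H^{1-\tau}\cap\dot H^s}$. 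Similarly, the $L^1_T L^\infty$ bounds on $\nabla u_2$ and $\La u_2$ in \eqref{def:X}, with $u_2=\cR_1\Omega$, rescale and pass to the limit — using $\Omega=\cR_1\rho$ at the limit, so $u_2 = \cR_1^2\rho$ — to yield the claimed control of $\nabla\cR_1^2\rho$ and $\La\cR_1^2\rho$ in $L^1_T(L^\infty)$.

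Finally, uniqueness for \eqref{eq:IPM} in the class $L^\infty_T(\dot H^{1-\tau}\cap\dot H^s)$ with the extra integrability $\nabla\cR_1^2\rho\in L^1_T L^\infty$: I would take two solutions $\rho_1,\rho_2$ with the same datum, write the equation for the difference $\delta\rho=\rho_1-\rho_2$, and perform an energy estimate in a low norm, say $\dot H^{1-\tau}$ or $L^2$-type, where the transport structure $(\cR_2\cR_1\rho,-\cR_1^2\rho)\cdot\nabla\rho$ is handled by integration by parts using incompressibility of the drift and the $L^1_T L^\infty$ bound on its gradient, so that Grönwall closes. The main obstacle I anticipate is precisely this last step at the rough end $\dot H^{1-\tau}$: because the nonlinearity $(\cR_2\cR_1\rho,-\cR_1^2\rho)\cdot\nabla\delta\rho$ and the symmetric term $(\cR_2\cR_1\delta\rho,-\cR_1^2\delta\rho)\cdot\nabla\rho_2$ both cost a derivative, one must carefully exploit the divergence-free structure and the Lipschitz control $\|\nabla\cR_1^2\rho\|_{L^1_T L^\infty}$ (together with boundedness of the Riesz transforms), possibly via a paraproduct decomposition in the anisotropic Besov framework of the paper, rather than a crude product estimate; the partial dissipation $-\cR_1^2$ does not regularize in the $\xi_1=0$ direction and so cannot by itself absorb the loss, which is the delicate point.
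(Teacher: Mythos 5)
Your proposal follows essentially the same route as the paper: existence and the stated bounds are obtained as a byproduct of Theorems \ref{Thm:Exist} and \ref{Thm:Relax} (the uniform-in-$\varepsilon$ estimates surviving the limit by lower semicontinuity, with the rescaling \eqref{DiffuRescale} converting $\sqrt\varepsilon\|\cR_1 b\|_{L^2_t(\dot H^r)}$ into an $\varepsilon$-free bound and leaving the $L^1_t(L^\infty)$ norms invariant), and uniqueness via a stability estimate for the difference of two solutions of \eqref{eq:IPM} plus Gr\"onwall, which is exactly Section 4.5 of the paper. The only point of divergence is that the paper runs that stability estimate after applying $\La^s$, i.e.\ at the top regularity where the commutator estimates of Section \ref{Sec:Exist} and the Lipschitz bounds on the drift apply directly, rather than at the rough end $\dot H^{1-\tau}$ (or $L^2$, which is in any case unavailable since $\dot H^{1-\tau}\cap\dot H^s\not\subset L^2$) where, as you yourself note, the fractional product estimates become delicate.
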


\begin{Rmq}[Comparison with the result of Elgindi \cite{ElgindiIPM}]
The global well-posedness of \eqref{eq:IPM} for small data, namely the asymptotic stability of \eqref{eq:IPMsystem} around the stratified steady state $\overline{\rho}_{\rm{eq}}(y)=\rho_0-y$,
 was first proved by Elgindi in \cite{ElgindiIPM} taking $b \in W^{4,1} \cap H^{s_0}, \, s_0\ge 20$.
 
Our Theorem \ref{Thm:ExistIPM} provides a new proof of Elgindi's result, which allows to substantially lower 
the regularity assumption of \cite{ElgindiIPM},
taking only $b \in \dot H^{1-\tau} \cap \dot H^s$ with $0 < \tau < 1$ and $s \ge 3+\tau$.
Once again, we take advantage of the anisotropic Littlewood-Paley decomposition and anisotropic Besov spaces that capture the time-integrability of the solution without relying on Green function estimates of the linearized problem. 
We also mention the asymptotic stability result by Castro-C\'ordoba-Lear \cite{castro2} of \eqref{eq:IPMsystem} around the stratified steady state $\overline{\rho}_{\text{eq}}=\rho_0-y$ in the domain $\mathbb{T} \times [-\pi, \pi].$
\end{Rmq}

 \begin{Rmq}[Instability results from Kiselev-Yao]
A consequence of the recent result \cite[Theorem 1.5]{KiselevYao} by Kiselev and Yao is that there exists an initial perturbation $\rho_\text{in}(x,y)$ satisfying
$\|\rho_\text{in}\|_{H^{2-\gamma}(\mathbb{T}\times [-\pi, \pi])}\ll 1$ for any $\gamma>0$, such that
the solution $\rho(t, x, y)$ to \eqref{eq:IPM} (provided it remains smooth for all times) displays the following time growth
 $$\lim \sup\limits_{t\to \infty} t^{-\frac s2 }\|\rho(t)\|_{\dot H^{s+1}(\mathbb{T}\times [-\pi, \pi])}=0,$$ for all $s>0$.
 In \cite[Remark 1.6]{KiselevYao}, the authors ask whether $\rho_{\text{in}}(x,y)$ can be made small in higher Sobolev spaces, while $\rho (t, x, y)$ still displays time growth. 
 Even though in the present work we study the case of the full space $\R^2$ rather than the bounded periodic channel, we underline that our Theorem \ref{Thm:ExistIPM} states that all perturbations $\rho_{\text{in}}$ that are small in $H^{s}(\R^2)$ with $s>3$ generate solutions $\rho (t, x, y)$ that remain small for all times. Thus, if a blow-up in finite time or a time-growth happens for solutions in the whole space $\R^2$, the initial perturbation must have less regularity than $H^s(\R^2)$, $s>3$.
\end{Rmq}
 
 \section{Anisotropic Besov spaces}
  
  \subsubsection{Anisotropic Littlewood-Paley decomposition}\label{sec:mainr}
 
 We introduce the following anisotropic Littlewood-Paley decompositions: for $j,q,k\in\Z$, we denote
\begin{itemize}
\item $\ddj$ the blocks associated to the Littlewood-Paley decomposition in $|\xi|$;
\item $\ddq^h$ the blocks associated to the Littlewood-Paley decomposition in the direction $\xi_1$,
\item $\ddk^v$ the blocks associated to the Littlewood-Paley decomposition in the direction $\xi_2$,

\end{itemize}
such that, denoting by $\mathcal{F}$ the Fourier transform,
$$\ddj u=\cF^{-1}(\varphi(2^{-j}|\xi|)\widehat{u}) \:\:\:\ddq^h u=\cF^{-1}(\varphi(2^{-q}\xi_1)\widehat{u}) \andf \ddk^v u=\cF^{-1}(\varphi(2^{-k}\xi_2)\widehat{u})  ,$$
where $\varphi(\xi) = \phi (\xi/2)-\phi(\xi)$ and $\phi \in C_c^\infty$ is such that $\phi =1 $ for $|\xi| \le 1/2$ and $\phi(\xi) = 0 $ for $|\xi| \ge 1$.
We define the following \emph{homogeneous} and \emph{anisotropic} Besov semi-norms:
\begin{align*}
\|f\|_{\dot B^{s}_{p,r}}&\triangleq \bigl\| 2^{js}\|\ddj f\|_{L^p(\R^d)}\bigr\|_{\ell^r(j\in\Z)},\\
\|f\|_{\dot B^{s_1,s_2}_{p,r}}&\triangleq \bigl\| 2^{js_1}2^{qs_2}\|\ddj\ddq^h f\|_{L^p(\R^d)}\bigr\|_{\ell^r(j\in\Z,k\in\Z)}.
\end{align*}

Hereafter we will omit the \emph{dot} (standing for \emph{homogeneous} spaces) and the second and third Besov indexes will be dropped as well for lightening the notation
$$\|f\|_{B^{s_1,s_2}}\triangleq\|f\|_{\dot B^{s_1,s_2}_{2,1}}.$$

\subsection{Technical results in anisotropic Besov spaces}\label{sec:para}
We now state an anisotropic version of Bernstein's lemma, the proof of which can be found in \cite{LinZhang3DMHD,PaicuAniso}.

\begin{Lemme}[Bernstein-type inequalities] \label{AnisoBernstein}
For $x=(x_1,x_2)\in\R^2$, let $B_1$ be a ball of $\R_{x_1}$, $B_2$ be a ball of $\R_{x_2}$, $C_1$ be an annulus of $\R_{x_1}$ and $C_2$ an annulus of $\R_{x_2}$. Let $1\leq p_1 \leq p_2 \leq \infty$ and $1\leq q_1 \leq q_2 \leq \infty$. Then, we have
\begin{itemize}
    \item If the support of $\wh a$ is included in $2^q B_1$, then
    \begin{align*}
        \|\d_{x_1}^s a\|_{L^{p_2}_{x_1}(L^{q_1}_{x_2})} \lesssim 2^{q(|s|+(\frac{1}{p_1}-\frac{1}{p_2}))}\| a\|_{L^{p_1}_{x_1}(L^{q_1}_{x_2})}.
    \end{align*}
    
   \item If the support of $\wh a$ is included in $2^k B_2$, then
    \begin{align*}
        \|\d_{x_2}^s a\|_{L^{p_1}_{x_1}(L^{q_2}_{x_2})} \lesssim 2^{k(|s|+2(\frac{1}{q_1}-\frac{1}{q_2}))}\| a\|_{L^{p_1}_{x_1}(L^{q_1}_{x_2})}.
    \end{align*}  
    
       \item If the support of $\wh a$ is included in $2^q C_1$, then
    \begin{align*}
        \| a\|_{L^{p_1}_{x_1}(L^{q_1}_{x_2})} \lesssim 2^{-q |s|}\|\d_{x_1}^s a\|_{L^{p_1}_{x_1}(L^{q_1}_{x_2})}.
    \end{align*} 
    
     \item If the support of $\wh a$ is included in $2^k C_2$, then
    \begin{align*}
        \| a\|_{L^{p_1}_{x_1}(L^{q_1}_{x_2})} \lesssim 2^{-k |s|}\|\d_{x_2}^s a\|_{L^{p_1}_{x_1}(L^{q_1}_{x_2})}.
    \end{align*} 
    
\end{itemize}

\end{Lemme}

 Embeddings of Sobolev spaces into anisotropic Besov spaces are provided by the result below.
\begin{Lemme}[Embedding in Sobolev space, \cite{LinZhang2DMHD}, Lemma 3.2]
\label{BesovSobolev}
Let $s_1,s_2,\tau_1,\tau_2\in\R$ such that $\tau_1<s_1+s_2<\tau_2$ and $s_2>0$. If $a\in\dot{H}^{\tau_1}(\R^2)\cap \dot{H}^{\tau_2}(\R^2)$ and $a\in B^{s_1,s_2}$, then
\begin{align*}
    \|a\|_{B^{s_1,s_2}}\lesssim     \|a\|_{B^{s_1+s_2}}\lesssim \|a\|_{\dot H^{\tau_1}}+\|a\|_{\dot H^{\tau_2}}.
\end{align*}
\end{Lemme}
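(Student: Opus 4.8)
The plan is to prove the two inequalities separately. The second one, $\|a\|_{B^{s_1+s_2}} \lesssim \|a\|_{\dot H^{\tau_1}} + \|a\|_{\dot H^{\tau_2}}$, is the standard (isotropic) embedding of the intersection of two homogeneous Sobolev spaces into a homogeneous Besov space with intermediate regularity, so it uses only the condition $\tau_1 < s_1 + s_2 < \tau_2$. Writing $\sigma := s_1 + s_2$, I would estimate $\|\ddj a\|_{L^2}$ in two ways: for low frequencies ($2^j$ small) by $\|\ddj a\|_{L^2} \lesssim 2^{-j\tau_1}\|\ddj \Lambda^{\tau_1} a\|_{L^2}$, and for high frequencies by $\|\ddj a\|_{L^2} \lesssim 2^{-j\tau_2}\|\ddj \Lambda^{\tau_2} a\|_{L^2}$. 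Then $\sum_j 2^{j\sigma}\|\ddj a\|_{L^2}$ splits as $\sum_{j \le j_0} 2^{j(\sigma - \tau_1)}\|\ddj \Lambda^{\tau_1} a\|_{L^2} + \sum_{j > j_0} 2^{j(\sigma-\tau_2)}\|\ddj \Lambda^{\tau_2}a\|_{L^2}$; since $\sigma - \tau_1 > 0$ and $\sigma - \tau_2 < 0$, both geometric weights are summable against the $\ell^2$ sequences $(\|\ddj \Lambda^{\tau_i}a\|_{L^2})_j$ by Cauchy--Schwarz (or Young's inequality for series), giving the bound by $\|a\|_{\dot H^{\tau_1}} + \|a\|_{\dot H^{\tau_2}}$. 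Optimizing the cutoff $j_0$ is not even needed; any fixed split works.

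For the first inequality, $\|a\|_{B^{s_1,s_2}} \lesssim \|a\|_{B^{s_1+s_2}}$, the point is to pass from the anisotropic double-indexed norm to the isotropic single-indexed one, and this is where the hypothesis $s_2 > 0$ enters crucially. I would start from
\[
\|a\|_{B^{s_1,s_2}} = \sum_{j,q \in \Z} 2^{js_1} 2^{qs_2} \|\ddj \ddq^h a\|_{L^2}.
\]
The key geometric observation is that $\ddj \ddq^h a \equiv 0$ unless $2^q \lesssim 2^j$ (the horizontal frequency $|\xi_1|$ cannot exceed $|\xi|$ up to a fixed constant), so the sum over $q$ is restricted to $q \le j + N_0$ for some absolute constant $N_0$. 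Using the almost-orthogonality $\|\ddj \ddq^h a\|_{L^2} \lesssim \|\ddj a\|_{L^2}$ (the operator $\ddq^h$ is an $L^2$-bounded Fourier multiplier uniformly in $q$), I would bound, for each fixed $j$,
\[
\sum_{q \le j + N_0} 2^{qs_2} \|\ddj \ddq^h a\|_{L^2} \lesssim \Big( \sum_{q \le j+N_0} 2^{qs_2} \Big) \|\ddj a\|_{L^2} \lesssim 2^{js_2} \|\ddj a\|_{L^2},
\]
where the geometric series converges precisely because $s_2 > 0$. Summing the resulting bound $2^{js_1} 2^{js_2} \|\ddj a\|_{L^2} = 2^{j(s_1+s_2)}\|\ddj a\|_{L^2}$ over $j \in \Z$ gives exactly $\|a\|_{B^{s_1+s_2}}$, as desired. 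Chaining the two inequalities finishes the proof.

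The main (and really only) obstacle is keeping the book-keeping of the Littlewood-Paley supports clean: one must verify that the support of $\widehat{\ddj \ddq^h a}$ forces $2^{q-1} \le |\xi_1| \le 2^{q+1}$ and $2^{j-1} \le |\xi| \le 2^{j+1}$ simultaneously, hence $q \le j + 1$ (up to the fixed constant coming from the support widths of $\varphi$), so that only the $s_2 > 0$ tail of the $q$-sum needs to be summed and it converges. The condition $s_2 > 0$ cannot be dropped — if $s_2 \le 0$ the geometric series $\sum_{q \le j+N_0} 2^{qs_2}$ is either constant-in-$j$-but-infinite or grows like $2^{js_2}$ only in the wrong direction, and the argument breaks. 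Everything else is routine: uniform $L^2$-boundedness of the dyadic multipliers and summation of geometric series. Since the result is quoted from \cite{LinZhang2DMHD}, a short proof along these lines, or simply a reference, suffices.
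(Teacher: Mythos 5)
Your proof is correct. The paper itself gives no argument for this lemma --- it is quoted from \cite{LinZhang2DMHD}, Lemma 3.2 --- so there is nothing to compare against line by line; but both halves of your argument are the standard ones, and your treatment of the first inequality (restricting the $q$-sum to $q\le j+N_0$ via the support constraint $|\xi_1|\lesssim|\xi|$, then summing the geometric series in $q$ using $s_2>0$ and the uniform $L^2$-boundedness of $\ddq^h$) is exactly the mechanism the paper itself deploys in its proof of Lemma \ref{InclusBesov}. One cosmetic remark: the second inequality needs only $\tau_1<s_1+s_2<\tau_2$ and the first only $s_2>0$, so your split of the hypotheses between the two halves is the right way to present it.
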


We will rely on the embeddings $B^{\frac{3}{2},\frac{1}{2}} \hookrightarrow\text{Lip},$ $B^{\frac{1}{2},\frac{3}{2}}\hookrightarrow \text{Lip}(\cR_1 \cdot)$ and  $B^{-\frac{1}{2},\frac{5}{2}} \hookrightarrow \text{Lip}(\cR_1^2 \cdot)$, where for $n=1,2$, the notation $\text{Lip}(\cR_1^n\cdot)$ denotes the space of functions whose Riesz transform of order $n$ is Lipschitz, cf. the left-hand sides of \eqref{EmbeddingLipR1} for the associated norms.
 \begin{Lemme}[Embeddings in $\text{Lip}$]\label{AnisoEmbed}
Let $a\in B^{\frac{3}{2},\frac{1}{2}}\cap B^{\frac 12, \frac 12} \cap B^{\frac 12, \frac 32}\cap B^{-\frac 12, \frac 52}$.
The following inequalities hold:
\begin{align*}
&\|a\|_{L^\infty}\lesssim  \|a\|_{B^{\frac 12,\frac12}}, \quad  \|\nabla  a \|_{L^\infty} \lesssim  \|a\|_{B^{\frac 32,\frac12}} \andf \|\La a\|_{L^\infty}\lesssim\|a\|_{B^{\frac 32, \frac 12}}.
\end{align*}
When the Riesz operator in the direction $x$ is involved, one has
\begin{align}\label{EmbeddingLipR1}
\|\nabla\cR_1 a \|_{L^\infty}\lesssim \|a\|_{B^{\frac 12,\frac32}},\quad \|\La \cR_1 a\|_{L^\infty} \lesssim \|a\|_{B^{\frac 12, \frac 32}}
 \andf \|\nabla\cR_1^2 a \|_{L^\infty}\lesssim \|a\|_{B^{-\frac 12,\frac52}}.
\end{align}
\end{Lemme}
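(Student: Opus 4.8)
The plan is to prove all six inequalities by the same two-step scheme, using only the anisotropic Littlewood--Paley decomposition from Section~\ref{sec:mainr}, the anisotropic Bernstein inequalities of Lemma~\ref{AnisoBernstein}, and the fact that $\|\cdot\|_{B^{\sigma_1,\sigma_2}}=\|\cdot\|_{\dot B^{\sigma_1,\sigma_2}_{2,1}}$ is an $\ell^1$-sum over $(j,q)\in\Z^2$. First I would write $a=\sum_{j,q\in\Z}\ddj\ddq^h a$, a decomposition which converges for the functions at hand; since $|\xi_1|\le|\xi|$, the block $\ddj\ddq^h a$ vanishes unless $2^q\lesssim 2^j$, so only indices with $q\le j+O(1)$ contribute and, in particular, the factors $2^{q-j}$ appearing below stay bounded. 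By the triangle inequality in $L^\infty$, for each operator $T\in\{\mathrm{Id},\nabla,\La,\nabla\cR_1,\La\cR_1,\nabla\cR_1^2\}$ one has $\|Ta\|_{L^\infty}\le\sum_{j,q}\|T\ddj\ddq^h a\|_{L^\infty}$, which reduces each statement to a single per-block estimate.

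For the per-block estimate I would first pass from $L^2$ to $L^\infty$. The Fourier support of $\ddj\ddq^h a$ lies in an annulus $|\xi_1|\sim 2^q$ in the first variable and in a ball $|\xi_2|\lesssim 2^j$ in the second, so applying Bernstein's inequality (Lemma~\ref{AnisoBernstein}) successively in $x_1$ and then in $x_2$ gives $\|\ddj\ddq^h a\|_{L^\infty}\lesssim 2^{j/2}2^{q/2}\|\ddj\ddq^h a\|_{L^2}$, the anisotropic analogue of the embedding $\dot B^{1}_{2,1}(\R^2)\hookrightarrow L^\infty$. Then I would absorb the Fourier multipliers: $\nabla$ and $\La$ cost a factor $2^{j}$ on a block localized at $|\xi|\sim 2^j$ (isotropic Bernstein for $\ddj$), whereas the decisive anisotropic gains are that $\cR_1=\La^{-1}\d_{x_1}$ costs only $2^{q-j}$ and $\cR_1^2$ only $2^{2(q-j)}$ on the block $\ddj\ddq^h a$ — indeed $\d_{x_1}$ costs $2^{q}$ on the $\xi_1$-annulus while $\La^{-1}$ saves $2^{-j}$ on the $|\xi|$-annulus, and the same computation run on the symbol $(\xi_1/|\xi|)^2$ gives $2^{2(q-j)}$. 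Combining, each $T$ above satisfies $\|T\ddj\ddq^h a\|_{L^\infty}\lesssim 2^{j\sigma_1}2^{q\sigma_2}\|\ddj\ddq^h a\|_{L^2}$ with $(\sigma_1,\sigma_2)$ equal, respectively, to $(\tfrac12,\tfrac12)$, $(\tfrac32,\tfrac12)$, $(\tfrac32,\tfrac12)$, $(\tfrac12,\tfrac32)$, $(\tfrac12,\tfrac32)$, $(-\tfrac12,\tfrac52)$ — i.e.\ precisely the four anisotropic spaces appearing in the hypothesis.

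Summing these bounds over $(j,q)\in\Z^2$ and recognizing the right-hand side as $\|a\|_{\dot B^{\sigma_1,\sigma_2}_{2,1}}=\|a\|_{B^{\sigma_1,\sigma_2}}$ closes all six inequalities at once. I expect the only non-routine point to be the treatment of the Riesz multipliers $\cR_1,\cR_1^2$ on the anisotropic blocks with the sharp gains $2^{q-j},2^{2(q-j)}$: this requires observing that, after the anisotropic rescaling $\xi_1\mapsto 2^q\zeta_1,\ \xi_2\mapsto 2^j\zeta_2$, the (slightly fattened) localized symbol of $\cR_1^{\,n}$ equals $2^{n(q-j)}$ times a symbol with $j,q$-uniformly bounded derivatives supported in an annular region (here the constraint $q\lesssim j$ is used), hence is convolution against a kernel of $L^1$-norm $\lesssim 2^{n(q-j)}$ and bounded on every $L^p$ with that norm. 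It is exactly this factor $2^{q-j}<1$ that produces the improved second Besov index in the last three inequalities and makes the series over the infinitely many low-$\xi_1$ blocks converge; the remaining ingredients — the decomposition, the Bernstein gains, and the $\ell^1$ summation — are standard Littlewood--Paley bookkeeping.
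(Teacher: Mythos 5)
Your proposal is correct and follows essentially the same route as the paper: decompose with $\ddj\ddq^h$, use the anisotropic Bernstein inequalities to pass from $L^2$ to $L^\infty$ at cost $2^{j/2}2^{q/2}$, and absorb $\nabla,\La$ at cost $2^{j}$ and $\cR_1^{\,n}$ at cost $2^{n(q-j)}$ before summing the $\ell^1$ series. The only cosmetic difference is that the paper inserts an extra vertical decomposition $\ddk^v$ and sums over $k\le j$ to produce the factor $2^{j/2}$, where you apply the vertical Bernstein inequality directly on the ball $|\xi_2|\lesssim 2^j$; your additional rescaling justification of the localized Riesz symbols is a detail the paper simply asserts.
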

\begin{proof}
Using the anisotropic Bernstein Lemma \ref{AnisoBernstein}, one obtains
\begin{align*}
     \|\nabla a \|_{L^\infty}\lesssim  \sum_{j,q\in\Z^2}2^{j}\|\ddj\ddq^h a \|_{L^\infty}
     &\lesssim  \sum_{j,q\in\Z^2,j\geq k}2^{j}\|\ddj\ddq^h\ddk^v a \|_{L^\infty}
     \\&\lesssim \sum_{j,q,k\in\Z^3,j\geq k}2^{j}2^{\frac{q}{2}}2^{\frac{k}{2}}\|\ddj\ddq^h\ddk^v a \|_{L^2}
     \\&\lesssim \sum_{j,q\in\Z^2}2^{\frac{3j}{2}}2^{\frac{q}{2}}\|\ddj\ddq^h a \|_{L^2}
     \\&\lesssim \|a\|_{B^{\frac 32,\frac12}_{2,1}}.
\end{align*}
When replacing the operator $\nabla$ by $\Lambda$, the proof follows exactly the same lines.
The estimates involving the Riesz operator $\cR_1$ can be obtained in a similar manner noticing that for $s\in\{1,2\}$ 
\begin{align*}
     \|\nabla\cR_1^s a \|_{L^\infty}\lesssim  \sum_{j,q\in\Z^2}2^{j}2^{-s j}2^{s q}\|\ddj\ddq^h a \|_{L^\infty}.
\end{align*}
\end{proof}

 \section{Proof of Theorem \ref{Thm:Exist}} \label{Sec:Exist}
 The proof of Theorem \ref{Thm:Exist} is divided in two main steps. We first provide the estimates in Sobolev spaces, namely we control the functional $\mathcal{M}(t)$ in \eqref{def:M}. 
The outcome of those estimates in Proposition \ref{Prop:apriori-Sobolev} involves the quantities $\|\nabla u_2\|_{L^1_T(L^\infty)}, \, \|\La u_2\|_{L^1_T(L^\infty)}$, whose control is not provided by the energy functional $\mathcal{M}(t)$.
 Thus, in a second step in Section \ref{sec:Linftycontrol}, we rely on \emph{anisotropic} Besov spaces to estimate the aforementioned quantities and to conclude the proof.
 \subsection{I. Control of $\mathcal{M}(t)$}
 The estimates in homogeneous Sobolev spaces are provided by the result below.
\begin{Prop}\label{Prop:apriori-Sobolev}
 Let $(b,\Omega)$ be a smooth solution to system \eqref{eq:system-newvariable}. Then the following holds
 \begin{align*}
\mathcal{M}(T) \lesssim X(0)+X^3(T).
 \end{align*}
\end{Prop}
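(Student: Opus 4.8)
The plan is to perform energy estimates on the symmetrized system \eqref{eq:system-z2} in the homogeneous spaces $\dot H^{1-\tau}$ and $\dot H^s$, exploiting the partial diagonalization to avoid hypocoercivity. First I would apply $\dot\Delta_j$ (or work directly in Fourier) to the $(b,z)$-system and take the $L^2$ inner product with $(\dot\Delta_j b, \dot\Delta_j z)$. The linear part gives the dissipation $\varepsilon\|\cR_1 b\|_{\dot H^r}^2 + \frac{1}{\varepsilon}\|z\|_{\dot H^r}^2$ after summing over $j$; the two leftover linear terms $-\varepsilon\cR_1^2\Omega$ and $-\varepsilon\cR_1[(\cR_2\Omega,-\cR_1\Omega)\cdot\nabla b]$ on the right of the $z$-equation are handled by recalling $\Omega = z + \varepsilon\cR_1 b$, so that $-\varepsilon\cR_1^2\Omega = -\varepsilon\cR_1^2 z - \varepsilon^2\cR_1^3 b$: the first is absorbed by $\frac{1}{\varepsilon}\|z\|^2$ (it is $O(\varepsilon)\|z\|$ against $\frac1\varepsilon\|z\|$, or rather one uses $\varepsilon\|\cR_1 z\|\|z\|_{\dot H^r}\le\frac{1}{2\varepsilon}\|z\|_{\dot H^r}^2 + \frac{\varepsilon^3}{2}\|\cR_1^2 b\|^2$ with the last term controlled by $\varepsilon\|\cR_1 b\|$ dissipation since $\varepsilon\le 1/2$), and similarly for the $\cR_1^3 b$ term. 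One also needs to control $\Omega$ itself in $L^\infty_T\dot H^r$ and $\frac1{\sqrt\varepsilon}\|\Omega\|_{L^2_T\dot H^r}$, which follows from $\Omega = z + \varepsilon\cR_1 b$ and the bounds already obtained on $z$ and $\sqrt\varepsilon\,\cR_1 b$; alternatively one estimates $\Omega$ directly from \eqref{eq:system-newvariable} where the damping $\frac1\varepsilon\Omega$ gives $\frac1{\sqrt\varepsilon}\|\Omega\|_{L^2_T\dot H^r}$ and the coupling term $\cR_1 b$ is paired off against the $b$-equation.

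The heart of the matter is bounding the nonlinear terms. All nonlinearities are of transport type $\textbf{u}\cdot\nabla(\cdot)$ with $\textbf{u} = (\cR_2\Omega, -\cR_1\Omega)$, plus the commutator-type term $\Lambda^{-1}[(\cR_2\Omega,-\cR_1\Omega)\cdot\nabla\Lambda\Omega]$ in the $z$-equation which, because $\Lambda^{-1}\nabla\Lambda$ is order zero, behaves like a transport term up to a paraproduct remainder of the same order. The strategy is: for the transport term $\textbf{u}\cdot\nabla b$ appearing in $\partial_t b$, after pairing with $\dot\Delta_j b$ one performs a standard commutator decomposition (Bony paraproduct), using that $\div\,\textbf{u} = 0$ so the top-order piece $\int T_{\textbf{u}}\nabla\dot\Delta_j b\cdot\dot\Delta_j b$ loses a derivative only through the commutator $[\dot\Delta_j, T_{\textbf{u}}\cdot\nabla]b$. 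The key point — and where the functional $X(t)$ in \eqref{def:X} is designed to close — is that the commutator and remainder estimates require $\|\nabla\textbf{u}\|_{L^\infty}$, i.e. $\|\nabla\cR_1\Omega\|_{L^\infty}$ and $\|\cR_2\nabla\Omega\|_{L^\infty}$, precisely the quantities $\|\nabla u_2\|_{L^1_T L^\infty}$ and $\|\Lambda u_2\|_{L^1_T L^\infty}$ (and $\|\nabla u_1\|_{L^1_T L^\infty}$, controlled analogously or by the anisotropic embeddings) that appear in $X(t)$; these are integrated in time against $\|b\|_{\dot H^r}^2$ and then handled by a Grönwall/bootstrap argument, or directly bounded by $X(0)+X^3(T)$ since $\|u_2\|_{L^1_T L^\infty}$-type quantities are part of $X$ and the remaining $\dot H^r$ norms are part of $\mathcal M$. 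For the $\dot H^s$ estimate with $s\ge 3+\tau$, I would instead put the derivatives on $b$ in Sobolev form: $\|\textbf{u}\cdot\nabla b\|_{\dot H^s}\lesssim \|\textbf{u}\|_{L^\infty}\|\nabla b\|_{\dot H^s} + \|\nabla b\|_{L^\infty}\|\textbf{u}\|_{\dot H^s}$ — wait, more carefully, $\|\nabla\textbf{u}\|_{L^\infty}\|b\|_{\dot H^s} + \|b\|_{L^\infty\text{-type}}\|\nabla\textbf{u}\|_{\dot H^{s}}$ via the tame product estimate $\|fg\|_{\dot H^s}\lesssim\|f\|_{L^\infty}\|g\|_{\dot H^s}+\|g\|_{L^\infty}\|f\|_{\dot H^s}$ — and since $\textbf{u}=\cR_1\Omega,\cR_2\Omega$ with $\Omega\in\dot H^s$ and $s>2$ gives $\|\textbf{u}\|_{L^\infty}\lesssim\|\Omega\|_{\dot H^{1-\tau}\cap\dot H^s}$, every nonlinear contribution is at worst cubic: one factor from a time-integrated $L^\infty$ norm sitting in $X$, two factors of $\mathcal M$-type norms, giving $X^3(T)$.

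The term requiring most care is the right-hand side of the $z$-equation, $\Lambda^{-1}[(\cR_2\Omega,-\cR_1\Omega)\cdot\nabla\Lambda\Omega]$: at first glance it loses a derivative on $\Omega$. The resolution is that $\Lambda^{-1}\partial_x\Lambda$ and $\Lambda^{-1}\partial_y\Lambda$ are Fourier multipliers of degree $1$... no — the combination $(\cR_2\Omega)\partial_x(\Lambda\Omega) - (\cR_1\Omega)\partial_y(\Lambda\Omega)$ is $\nabla^\perp\cdot$ acting on... actually one writes it as $\div(\textbf{u}\,\Lambda\Omega)$ since $\div\textbf{u}=0$, then $\Lambda^{-1}\div(\textbf{u}\Lambda\Omega)$ is order zero in $\textbf{u}\Lambda\Omega$, and a paraproduct split gives $\|\Lambda^{-1}\div(\textbf{u}\Lambda\Omega)\|_{\dot H^r}\lesssim \|\textbf{u}\Lambda\Omega\|_{\dot H^r}\lesssim \|\nabla\textbf{u}\|_{L^\infty}\|\Omega\|_{\dot H^r} + \|\textbf{u}\|_{L^\infty\text{-ish}}\|\Lambda\Omega\|_{\dot H^r}$, and this last term is dangerous because it is not small a priori. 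Here is where the factor $\varepsilon^2$ in the scaled system \eqref{eq:bouss} — equivalently $\Lambda^{-1}[\dots]$ not multiplied by $\varepsilon$ in \eqref{eq:system-z2} but the term is paired against $\frac1\varepsilon\|z\|$ dissipation — must be exploited: the contribution is $\int \Lambda^{-1}\div(\textbf u\Lambda\Omega)\,z\,$, bounded by $\|\textbf u\|_{B^{1/2,1/2}\text{-type}}\|\Lambda\Omega\|_{\dot H^r}\|z\|_{\dot H^r}\le C\|\textbf u\|_{\dots}\,\varepsilon\|\Lambda\Omega\|_{\dot H^r}^2 + \frac{1}{4\varepsilon}\|z\|_{\dot H^r}^2$, and $\varepsilon\|\Lambda\Omega\|_{\dot H^r}^2 \le$ — here $\Omega$ in $\dot H^r$ being controlled in $L^\infty_T$ and $\frac1{\sqrt\varepsilon}L^2_T$, one needs $\sqrt\varepsilon\|\Lambda\Omega\|$ not $\frac1{\sqrt\varepsilon}$, so this forces using the $\dot H^{r}$ norm of $\Omega$ with a time integration paid by the $\nabla\textbf u$ factor in $X$, i.e. one rewrites $\varepsilon\|\Lambda\Omega\|_{\dot H^r}^2 = \varepsilon\|\Omega\|_{\dot H^{r+1}}^2$ which is NOT controlled — so instead the derivative must go the other way: $\|\textbf u\Lambda\Omega\|_{\dot H^r}\lesssim \|\nabla\textbf u\|_{\dot H^{r-1+\text{something}}}\|\Lambda\Omega\|_{L^\infty}+\dots$, and $\|\Lambda\Omega\|_{L^\infty}=\|\Lambda u_2\|_{L^\infty}/(\text{Riesz})$... this is exactly why $\|\Lambda u_2\|_{L^1_T L^\infty}$ sits in $X(t)$. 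So the correct accounting: $\int\Lambda^{-1}\div(\textbf u\Lambda\Omega)z \lesssim (\|\nabla\textbf u\|_{L^\infty}+\|\Lambda\Omega\|_{L^\infty})(\|\Omega\|_{\dot H^r}+\|\textbf u\|_{\dot H^r})\|z\|_{\dot H^r}$, integrate in time, use $\|z\|_{L^2_T\dot H^r}\le\sqrt\varepsilon\mathcal M$, $\|\Omega\|,\|\textbf u\|\in L^\infty_T\dot H^r$, and $(\|\nabla\textbf u\|+\|\Lambda\Omega\|)\in L^1_T L^\infty\subset X$, yielding $\lesssim X(T)\cdot\mathcal M(T)^2 \cdot\sqrt\varepsilon \le X^3(T)$ after Young (the $\sqrt\varepsilon$ helping).

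So the main obstacle is organizing the nonlinear estimates for the $z$-equation so that every term is either absorbed by the dissipation (using $\varepsilon\le 1/2$ and the algebraic identity $\Omega=z+\varepsilon\cR_1 b$) or bounded by $X(0)+X^3(T)$ with all the derivative counting consistent — in particular making sure that whenever a top-order derivative appears on $\Omega$ it is paired with an $L^\infty_T L^\infty$-in-time-integrable factor belonging to $X$, and never with a norm that is not controlled by $\mathcal M$. Once Proposition \ref{Prop:apriori-Sobolev} is in hand, combined with the Besov estimates of Section \ref{sec:Linftycontrol} controlling $\|\nabla u_2\|_{L^1_T L^\infty}+\|\Lambda u_2\|_{L^1_T L^\infty}$ by $X(0)+X^3(T)$ via Lemmas \ref{BesovSobolev} and \ref{AnisoEmbed}, a standard bootstrap on $X(t)$ with $\delta_0$ small closes the global existence in Theorem \ref{Thm:Exist}.
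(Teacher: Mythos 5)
Your overall architecture (energy estimates on the $(b,z)$ system, absorption of the residual linear terms into the dissipation, divergence-free cancellation of the pure transport parts, commutator estimates closed with the $L^1_T(L^\infty)$ quantities stored in $X$) matches the paper, and your treatment of the $z$-equation nonlinearity $\Lambda^{-1}[\textbf{u}\cdot\nabla\Lambda\Omega]$ — while meandering — lands in essentially the right place: that term is quadratic in $\Omega$, which enjoys $\varepsilon^{-1/2}L^2_T$ control, so it is in fact one of the easier contributions. The genuine gap is that you never confront the term the paper singles out as the delicate one, namely the commutator $\mathcal{C}_{1,2}=[\Lambda^s,\cR_1\Omega]\,\partial_y b$ coming from $u_2\partial_y b$ in the $b$-equation. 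The tame commutator estimate you invoke gives, besides the benign piece $\|\nabla\cR_1\Omega\|_{L^\infty}\|b\|_{\dot H^s}^2$, the cross term $\|\cR_1\Omega\|_{\dot H^s}\,\|\partial_y b\|_{L^\infty}\,\|b\|_{\dot H^s}$. This cannot be integrated in time with the available norms: the functional $\mathcal{M}$ controls $\cR_1 b$ in $L^2_T(\dot H^s)$ but \emph{not} $b$ itself nor $\partial_y b$ in any time-integrable spatial norm ($\|\partial_y b\|_{L^\infty}$ is only bounded in $L^\infty_T$), so one factor of the triple product has no decay and a Gr\"onwall argument would only yield a bound growing exponentially in $T$, not the uniform global estimate $\mathcal{M}(T)\lesssim X(0)+X^3(T)$.

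The paper's resolution, which is the real content of the proof and is absent from your proposal, is to expand this commutator to higher order using the generalized Kenig--Ponce--Vega / fractional Leibniz rule of Li (Lemma \ref{lem:Lin}), and then to exploit the structure $\cR_1=\Lambda^{-1}\partial_x$ by integrating by parts in the \emph{horizontal} variable: the $\partial_x$ is moved off $\Omega$ and onto $b$, reconstructing $\cR_1\dot b=\partial_x\Lambda^{s-1}b$, which \emph{is} controlled in $L^2_T$ by $\mathcal{M}$ (with weight $\sqrt{\varepsilon}$ matched against the $\varepsilon^{-1/2}L^2_T$ control of $\Omega$). The remaining top-order piece $\mathcal{J}_1$ is exactly the one paid for by $\|\nabla u_2\|_{L^1_T(L^\infty)}\subset X$. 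Without this expansion-plus-integration-by-parts step (or an equivalent device exploiting that every dangerous occurrence of $b$ can be turned into $\cR_1 b$), the $\dot H^s$ estimate does not close, so the proposal as written would fail at this point.
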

\begin{proof}
The estimate of $\mathcal{M}(t)$ is divided in two parts.\\\\
\textbf{i) Control of $\mathcal{M}_s(t)$, with $s \ge 3+\tau$.}
We consider the equations for the unknowns $(b, z)$ in system \eqref{eq:system-z2}, 
applying the operator $\La^s=(-\Delta)^\frac{s}{2}$ to each term with the notation $(\dot b, \dot z, \dot \Omega)=(\La^s b, \La^s z, \La^s \Omega)$.


Noticing that $\Lambda^s\cR_1=\Lambda^{s-1}\d_x$, the quasi-linearized system reads
\begin{equation}\label{eq:quasilin}
    \left\{
    \begin{aligned}
        \d_t \dot b - \varepsilon\cR_1^2 \dot b & = \cR_1 \dot z + (\cR_2 \Omega, - \cR_1\Omega) \cdot \nabla \dot b + [\La^s, \cR_2 \Omega] \d_x b-[\La^s, \cR_1 \Omega] \d_y b \\
    &:=\cR_1 \dot z+\cI_1+\mathcal{C}_{1,1}-\mathcal{C}_{1,2}, \\
    \d_t \dot z + \frac{\dot z}{\varepsilon}&=-\varepsilon \cR_1^2 \dot \Omega - \varepsilon  ((\cR_2 \Omega, - \cR_1\Omega) \cdot \nabla \cR_1 \dot b) +  (\cR_2 \Omega, - \cR_1\Omega) \cdot  \nabla  \dot \Omega\\
    &\quad + [\La^{s-1}, \cR_2 \Omega] \d_x \Lambda \Omega -[\La^{s-1}, \cR_1 \Omega] \d_y \Lambda \Omega\\
    & \quad  - \varepsilon [\La^{s-1}\d_x, \cR_2 \Omega] \d_x b+\varepsilon  [\La^{s-1}\d_x, \cR_1 \Omega] \d_y b\\
    &=:-\varepsilon \cR_1^2 \dot \Omega- \cI_2+\cI_3 + \mathcal{C}_{2,1}-\mathcal{C}_{2,2}-\mathcal{C}_{3,1}+\mathcal{C}_{3,2}.  
    \end{aligned}
    \right.
\end{equation}
Similarly, the quasi-linearized equation for $\Omega$ (from \eqref{eq:system-newvariable}) reads
\begin{align}\label{eq:omega-dot}
    \d_t \dot \Omega + \frac{\dot \Omega}{\varepsilon}=\cR_1 \dot b + ((\cR_2\Omega, -\cR_1\Omega)\cdot (\nabla \dot \Omega))+ [\La^{s-1}, \cR_2 \Omega] \d_x \Lambda \Omega -[\La^{s-1}, \cR_1 \Omega] \d_y \Lambda \Omega.
\end{align}

\noindent Let us provide the desired estimate.
\subsubsection*{The linear evolution}
\noindent We first look at the linear terms, neglecting the nonlinearity. Using the skew-symmetry of the Riesz transform $(\cR_1 \dot z, b)_{L^2}=-(\dot z, \cR_1 \dot b)_{L^2}$ and $(\cR_1^2 \dot \Omega, \dot z)_{L^2}=-(\cR_1 \dot \Omega, \cR_1 \dot z)_{L^2}$, 
\begin{align*}
    \frac 12 \frac{d}{dt} (\|b\|^2_{\dot H^s}+\|\Omega\|^2_{\dot H^s}+\|z\|^2_{\dot H^s}) + \varepsilon \|\cR_1 b\|^2_{\dot H^s}&+ \frac{1}{\varepsilon}\|z\|^2_{\dot H^s}+\frac{1}{\varepsilon}\|\Omega\|^2_{\dot H^s}=-(\cR_1 \dot b, \dot z)_{L^2}+\varepsilon (\cR_1 \dot \Omega, \cR_1 \dot z)_{L^2}.
\end{align*}
Using the Cauchy-Schwarz inequality
\begin{align*}
    |(\cR_1 \dot b, \dot z)_{L^2}| &\le  \frac{\varepsilon}{2}\|\cR_1 b\|_{\dot H^s}^2 + \frac{1}{2\varepsilon}\|z\|_{\dot H^s}^2,\\
    \varepsilon |(\cR_1 \dot \Omega, \cR_1 \dot z)_{L^2}| & \le \frac{\varepsilon^2}{2}\|\cR_1  b\|_{\dot H^s}^2 + \frac{1}{2} \| \cR_1  z\|_{\dot H^s}^2,
\end{align*}
and the continuity of the Riesz transform $\|\cR_j a\|_{L^2} \le \|a\|_{L^2}$ for any $a \in L^2$,
the last two terms are absorbed by the linear dissipation yielding the inequality
\begin{align*}
    \frac 12 \frac{d}{dt} (\|b\|^2_{\dot H^s}+\|\Omega\|^2_{\dot H^s}+\|z\|^2_{\dot H^s}) + \frac{\varepsilon}{4} \|\cR_1 b\|^2_{\dot H^s}&+ \frac{1}{4\varepsilon}\|z\|^2_{\dot H^s}+\frac{1}{\varepsilon}\|\Omega\|^2_{\dot H^s} \le 0.
\end{align*}

Now we deal with the nonlinearity of system \eqref{eq:quasilin}.

\subsubsection*{Estimates of the nonlinear term}
Consider the quasi-linearized terms $\mathcal{I}_j$ for $j\in \{1,2,3\}$.
The divergence-free condition and integration by parts yield
\begin{align*}
    (\cI_1, \dot b)_{L^2} = ((\cR_2 \Omega, - \cR_1\Omega) \cdot \nabla \dot b, \dot b)_{L^2}=0. 
\end{align*}
Furthermore, it is easy to see that
\begin{align*}
    -\mathcal{I}_2+\mathcal{I}_3=(\cR_2 \Omega, - \cR_1\Omega) \cdot  \nabla \dot z,
\end{align*}
readily implying, using again the divergence free condition, that
    $(-\mathcal{I}_2+\mathcal{I}_3, \dot z)=0.$
Next, let us focus on the commutators.
    
\subsubsection*{Commutator estimates}
We begin with the terms of type $(\mathcal{C}_{2, j}, \dot z)_{L^2}, (\mathcal{C}_{3, j}, \dot z)_{L^2}$ for $j \in \{1,2\}$, which are easier being quadratic in (some norm of) the dissipative variable $z$. Notice indeed that as the products $(\mathcal{C}_{1, j}, \dot b)_{L^2}$ are quadratic in $b$, they require a more careful treatment since the $L^2_T$ control of the (spatial norm of the) variable $b$ is not provided by the energy functional (one only controls $\cR_1 b$, see the definition of $\mathcal{M}_\tau (T)$ in \eqref{def:Mmu}).

\noindent Let us first look at $ \mathcal{C}_{2,1}=[\Lambda^{s-1}, \mathcal{R}_2 \Omega] \partial_x \Lambda \Omega$. 
By applying the commutator estimates in Lemma \ref{lem:comm} (in the Appendix) with $s\ge 3+\tau>\frac{d}{2}+1$, one has 
\begin{align*}
\| \mathcal{C}_{2,1}\|_{L^2}\ & \lesssim \| \nabla \cR_2 \Omega\|_{L^\infty}\|\cR_1\Omega\|_{{\dot H}^s}+\|\d_x \Lambda \Omega\|_{L^\infty} \|\cR_2 \Omega\|_{{\dot H}^{s-1}}.
\end{align*} 
Now, applying first Lemma \ref{AnisoEmbed} and after Lemma \ref{BesovSobolev} with $1-\tau=\tau_1 < 2 < s=\tau_2$, we have 
\begin{align*}
 \| \nabla \cR_2 \Omega\|_{L^\infty} \lesssim \|\cR_2 \Omega\|_{B^{\frac 32, \frac 12}} \lesssim  \|\cR_2 \Omega\|_{\dot H^{1-\tau} \cap \dot H^{s}}.
\end{align*}
\footnote{Note that the controls $\|\Omega\|_{\dot H^{2-\tau}}, \|b\|_{\dot H^{2-\tau}}$ would be enough and there is no need of $\|\Omega\|_{\dot H^{1-\tau}}, \|b\|_{\dot H^{1-\tau}}$ at this stage. }
Similarly, using again Lemma \ref{AnisoEmbed} and Lemma \ref{BesovSobolev} 
\begin{align*}
\| \partial_x \Lambda \Omega\|_{L^\infty} \lesssim \|\La^2 \cR_1 \Omega\|_{L^\infty} \lesssim \| \Lambda \Omega\|_{B^{\frac 12, \frac 32}} \lesssim \|\Lambda \Omega\|_{\dot H^{2-\tau} \cap \dot H^{2+\tau}} \lesssim \| \Omega\|_{\dot H^{3-\tau} \cap \dot H^{s}}, \quad (s\ge 3+\tau),
\end{align*}
where now, using the interpolation Lemma \ref{lem:int}
\begin{align*}
\|\Omega\|_{\dot H^{3-\tau}} \lesssim \|\Omega\|_{\dot H^{1-\tau}}^\theta \|\Omega\|_{\dot H^s}^{1-\theta}, \quad \text{with} \quad \theta=\frac{s+\tau-3}{s+\tau-1}. 
\end{align*}
Finally, the Young inequality $ab \le \frac{a^p}{p}+\frac{b^q}{q}$ with $p=\frac 1\theta, q=\frac{1}{1-\theta}$ gives
\begin{align*}
\|\Omega\|_{\dot H^{3-\tau}} \lesssim \  \|\Omega\|_{\dot H^{1-\tau}} + \|\Omega\|_{\dot H^{s}}.
\end{align*}
In the same way, using that $\|\cR_1 a\|_{\dot H^s} \le \|a\|_{\dot H^s}$ for any $s\ge 0$ and $a \in \dot H^s$, by interpolation 
\begin{align*}
\|\cR_2 \Omega\|_{\dot H^{s-1}} \lesssim \|\Omega\|_{\dot H^{s-1}} \lesssim \|\Omega\|_{\dot H^{1-\tau}}^{\tilde \theta} \|\Omega\|_{\dot H^{s}}^{\tilde \theta} \lesssim   \|\Omega\|_{\dot H^{1-\tau}} + \|\Omega\|_{\dot H^{s}}, \quad \text{with} \quad \tilde \theta=\frac{1}{s+\tau-1}. 
\end{align*}
Altogether, it yields 
\begin{align*}
    \int_0^T |(\mathcal{C}_{2,1}, \dot z)_{L^2}|\, dt & \le \int_0^T \| \mathcal{C}_{2,1}\|_{L^2}\|z\|_{{\dot H}^s} \, dt \lesssim \int_0^T \|\Omega\|_{\dot H^{1-\tau} \cap \dot H^s} \|\Omega\|_{\dot H^{1-\tau} \cap \dot H^s} \|z\|_{\dot H^s} \, dt\\
&\lesssim (\|\Omega\|_{L^\infty_T({\dot H}^{1-\tau})}+\|\Omega\|_{L^\infty_T(\dot H^s)})(\|\Omega\|_{L^2_T({\dot H}^{1-\tau})}+\|\Omega\|_{L^2_T(\dot H^s)}) \|z\|_{L^2_T({\dot H}^s)} \\
&\lesssim  \mathcal{M}^3(T). 
\end{align*}

The commutator $\mathcal{C}_{2,2}$ is completely analogous, we omit it. Let us consider $\mathcal{C}_{3,1}=\varepsilon [\Lambda^{s-1}\d_x, \cR_2 \Omega] \d_x b$, which gives
\begin{align*}
\|\mathcal{C}_{3,1}\|_{L^2}& \lesssim \varepsilon (\| \nabla \cR_2 \Omega\|_{L^\infty}\|\cR_1 b\|_{{\dot H}^s}+\|\partial_x b\|_{L^\infty} \|\cR_1 \cR_2 \Omega\|_{{\dot H}^s}).
\end{align*}
Now note that, applying the same reasoning as before (Lemma \ref{AnisoEmbed}, Lemma \ref{BesovSobolev}),
\begin{align*}
\|\partial_x b\|_{L^\infty} \lesssim \|\Lambda \cR_1 b\|_{B^{\frac 12, \frac 12}} \lesssim \|\Lambda \cR_1 b\|_{\dot H^{1-\tau} \cap \dot H^{s-1}} \lesssim \|\cR_1 b\|_{\dot H^{2-\tau} \cap \dot H^{s}}, 
\end{align*}
where, again,
\begin{align*}
\|\cR_1 b\|_{\dot H^{2-\tau}} \lesssim \|\cR_1 b\|_{\dot H^{1-\tau}}^{\theta}  \|\cR_1 b\|_{\dot H^{s}}^{1-\theta} \lesssim \|\cR_1 b\|_{\dot H^{1-\tau}} + \|\cR_1 b\|_{\dot H^{s}}, \quad \theta=\frac{s+\tau-2}{s+\tau-1}.
\end{align*}

This way
\begin{align*}
    \int_0^T |(\mathcal{C}_{3,1}, \dot \Omega)_{L^2}| \, dt & \lesssim \varepsilon \int_0^T (\| \nabla \cR_2 \Omega\|_{L^\infty}\|\cR_1 b\|_{{\dot H}^s}+\|\d_x b\|_{L^\infty} \|\cR_2 \Omega\|_{{\dot H}^s})\|\Omega\|_{\dot H^s} \, dt \\
& \lesssim \varepsilon \int_0^T \|\Omega\|_{\dot H^{1-\tau} \cap \dot H^s}^2 \|b\|_{\dot H^s} + \|\cR_1 b\|_{\dot H^{1-\tau} \cap \dot H^s} \|\Omega\|_{\dot H^s}^2 \, dt \\
& \lesssim \varepsilon^2 \|b\|_{L^\infty_T (\dot H^{1-\tau} \cap \dot H^s)}\times \frac{\|\Omega\|^2_{L^2_T (\dot H^{1-\tau} \cap \dot H^s)}}{\varepsilon} \\
&\lesssim \mathcal{M}^3(T).
\end{align*}
The commutator $\mathcal{C}_{3,2}$ is bounded similarly. 

 Now we deal with the more delicate estimates involving the commutators $\mathcal{C}_{1,j}$ for $j \in \{1,2\}$ in the equation of $\dot b$. Let us start with the term with $\mathcal{C}_{1,1}=[\Lambda^s, \cR_2\Omega] \partial_x b$, where it is easy to reconstruct the term $\cR_1 \dot b$, which is controlled in $L^2_T$ by the energy functional $\mathcal{M}(T)$. We have, applying the above reasoning, that
\begin{align*}
\|\mathcal{C}_{1,1}\|_{L^2} &\lesssim \|\nabla \cR_2 \Omega\|_{L^\infty} \|\partial_x b\|_{\dot H^{s-1}} + \|\cR_2 \Omega\|_{\dot H^s} \|\partial_x b\|_{L^\infty} \\
&\lesssim \|\Omega\|_{\dot H^{1-\tau} \cap \dot H^s} \|\cR_1 b\|_{\dot H^s} + \|\cR_1 b\|_{\dot H^{1-\tau} \cap \dot H^s} \|\Omega\|_{\dot H^s},
\end{align*}
so that
\begin{align*}
    \int_0^T |(\mathcal{C}_{1,1}, \dot b)_{L^2}| \, dt & \le \int_0^T \|\mathcal{C}_{1,1}\|_{L^2} \|b\|_{{\dot H}^s} \, d t  \lesssim \int_0^T \frac{1}{\sqrt \varepsilon} \|\Omega\|_{\dot H^{1-\tau} \cap \dot H^s} \|\cR_1 b\|_{\dot H^{1-\tau} \cap \dot H^s} \sqrt \varepsilon\|b\|_{{\dot H}^s}\, dt  \\
    & \lesssim \frac{1}{\sqrt \varepsilon}\|\Omega\|_{L_T^2 ({\dot H}^{1-\tau} \cap \dot H^s)}\|b\|_{L^\infty_T( {\dot H}^s)}  \sqrt \varepsilon \| \cR_1 b\|_{L^2_T ({\dot H}^{1-\tau}\cap \dot H^s)} \lesssim \mathcal{M}^3(T). 
\end{align*}
The next commutator $\mathcal{C}_{1,2}=[\Lambda^s, \cR_1 \Omega] \partial_y b$ requires a more careful treatment.
We rely on the fractional Leibniz rule, which is an extension, due to Li \cite{Li} (see also D'Ancona \cite{Dancona}) of the Kenig-Ponce-Vega inequality to the case $s \ge 1$.\\
 We introduce the notation $\alpha=(\alpha_1, \alpha_2) \in \mathbb{N}^2 \, (\beta=(\beta_1, \beta_2) \in \mathbb{N}^2)$ and $\nabla^\alpha=(\partial_x^{\alpha_1}, \partial_y^{\alpha_2})$, while the operator $\Lambda^{s, \alpha}$ is defined via Fourier transform as
\begin{align}
\widehat{\Lambda^{s, \alpha} f}(\xi) = \widehat{\Lambda^{s, \alpha}}(\xi) \widehat{f}(\xi), \qquad
\widehat{\Lambda^{s, \alpha}}(\xi)=i^{-|\alpha|}\partial_\xi^\alpha (|\xi|^s),
\end{align}
where $|\alpha|=\alpha_1+\alpha_2$ (resp. $|\beta|=\beta_1+\beta_2$).
Notice that $\Lambda^{s,\alpha}$ is a pseudo-differential operator of order $s-\alpha.$
Now, we apply Lemma \ref{lem:Lin}, with $s_1=1, s_2=s-1$, which gives
\begin{align}\label{est:KPV}
\left\|\mathcal{C}_{1,2} -\sum_{|\alpha|=1} \frac{1}{\alpha!} \nabla^\alpha \cR_1 \Omega \Lambda^{s, \alpha} \partial_y b - \sum_{|\beta| \le s-2}\frac{1}{\beta!}  \nabla^\beta \partial_y b \Lambda^{s, \beta} \cR_1 \Omega \right\|_{L^2} \lesssim \|\Lambda \cR_1 \Omega\|_{\text{BMO}} \|\partial_y b\|_{\dot H^{s-1}}.
\end{align}

Then we write the scalar product adding and subtracting the above right-hand side as follows
\begin{align*}
(\mathcal{C}_{1,2}, \dot b)_{L^2} &= \left(\mathcal{C}_{1,2} -\sum_{|\alpha|=1} \frac{1}{\alpha!} \nabla^\alpha \cR_1 \Omega \Lambda^{s, \alpha} \partial_y b - \sum_{|\beta| \le s-2}\frac{1}{\beta !}  \nabla^\beta \partial_y b \Lambda^{s, \beta} \cR_1 \Omega, \dot b\right)_{L^2} \\
&\quad +\sum_{|\alpha|=1} \frac{1}{\alpha!} (\nabla^\alpha \cR_1 \Omega \Lambda^{s, \alpha} \partial_y b, \dot b)_{L^2} + \sum_{|\beta| \le s-2}\frac{1}{\beta !}  (\nabla^\beta \partial_y b \Lambda^{s, \beta} \cR_1 \Omega, \dot b)_{L^2},
\end{align*}
yielding, thanks to \eqref{est:KPV},
\begin{align}\label{est:C21}
|(\mathcal{C}_{1,2}, \dot b)_{L^2}|& \lesssim  \|\Lambda \cR_1 \Omega\|_{\text{BMO}} \|\partial_y b\|_{\dot H^{s-1}}\|b\|_{\dot H^s}+|\mathcal{J}_1|+|\mathcal{J}_2|,
\end{align}
where we denote
\begin{align*}
\mathcal{J}_1:&= \sum_{|\alpha|=1} \frac{1}{\alpha !}(\nabla^\alpha \cR_1 \Omega \Lambda^{s, \alpha} \partial_y b, \dot b)_{L^2} \andf
 \mathcal{J}_2:= \sum_{|\beta| \le s-2} \frac{1}{\beta !} (\nabla^\beta \partial_y b \Lambda^{s, \beta} \cR_1 \Omega, \dot b)_{L^2}.
\end{align*}

Now note that $\mathcal{J}_1$ rewrites as 
\begin{align*}
\sum_{|\alpha|=1} \nabla^\alpha \cR_1 \Omega \Lambda^{s, \alpha} \partial_y b &=- s(\nabla  \cR_1 \Omega)( \Lambda^{s-2} \nabla \partial_y b) = - s(\nabla  \cR_1 \Omega)( \Lambda^{s-1} \nabla \mathcal{R}_2 b),
\end{align*}

so that using the continuity of $\cR_2$ in $L^2$
\begin{align}\label{est:J1}
|\mathcal{J}_1|& \lesssim \|\nabla \cR_1 \Omega\|_{L^\infty} \|\Lambda^{s} \cR_2 b\|_{L^2} \|b\|_{\dot H^s} \lesssim \|\nabla \cR_1 \Omega\|_{L^\infty}\|b\|_{\dot H^s}^2,
\end{align} 
and integrating in time, recalling that $u_2=-\cR_1 \Omega$,
\begin{align*}
\int_0^T |\mathcal{J}_1| \, dt \lesssim  \|\nabla \cR_1 \Omega\|_{L^1_T (L^\infty)}\|b\|_{L^\infty_T( \dot H^s)}^2 \lesssim \|\nabla u_2 \|_{L^1_T( L^\infty)}\mathcal{M}_s(T) \lesssim X(T)^3. 
\end{align*}

Now let us deal with $\mathcal{J}_2$.
As $\cR_1= \Lambda^{-1}{\partial_x}$, we integrate by parts in the horizontal direction
\begin{align}
\mathcal{J}_2=\sum_{|\beta|\le s-2} ((\nabla^\beta \partial_y b) \Lambda^{s-1,\beta} \partial_x\Omega , \dot b)_{L^2}&=-\sum_{|\beta|\le s-2} ((\nabla^\beta \partial^2_{xy} b) \Lambda^{s-1,\beta} \Omega , \dot b)_{L^2}\notag\\
&\quad -\sum_{|\beta|\le s-2} ((\nabla^\beta \partial_y b) \Lambda^{s-1,\beta}\Omega , \partial_x \dot b)_{L^2}. \label{eq:J2}
\end{align}

Now, we deal with the last term in \eqref{eq:J2}. Using that $\partial_x \dot b=\Lambda (\partial_x \Lambda^{s-1} b)$ and the symmetry of $\Lambda$, it can be written as 
\begin{align*}
\sum_{|\beta| \le s-2} ((\nabla^\beta \partial_y b) \Lambda^{s-1,\beta}\Omega , \partial_x \dot b)_{L^2}&= \sum_{|\beta| \le s-2} (\Lambda (\nabla^\beta \partial_y b \Lambda^{s-1, \beta}  \Omega), \partial_x \Lambda^{s-1}  b)_{L^2}\\
&=\sum_{|\beta| \le s-2}(\Lambda (\nabla^\beta \partial_y b \Lambda^{s-1, \beta}  \Omega), \cR_1 \dot b)_{L^2}.
\end{align*}
We use the following decomposition
\begin{align*}
\sum_{|\beta| \le s-2}(\Lambda (\nabla^\beta \partial_y b \Lambda^{s-1, \beta}  \Omega), \cR_1 \dot b)_{L^2}&=(\Lambda ( \partial_y b \Lambda^{s-1}  \Omega), \cR_1 \dot b)_{L^2}\\
&\quad +\sum_{|\beta| =1 \, \text{and} \, \beta=(1,1)}(\Lambda (\nabla^\beta \partial_y b \Lambda^{s-1, \beta}  \Omega), \cR_1 \dot b)_{L^2} \\
&\quad  + \sum_{2 \le |\beta|\le s-2 \, \text{and} \, \beta\neq(1,1)}(\Lambda (\nabla^\beta \partial_y b \Lambda^{s-1, \beta}  \Omega), \cR_1 \dot b)_{L^2}\\
&=\mathcal{J}_{2}^a + \mathcal{J}_{2}^b+\mathcal{J}_{2}^c.
\end{align*}

By virtue of the product Lemma \ref{lem:product}, one obtains 
\begin{align*}
\mathcal{J}_{2}^a & \lesssim \|\cR_1 b\|_{\dot H^s} (\|\Lambda \partial_y b\|_{L^\infty} \|\Lambda^{s-1}\Omega\|_{L^2} + \|\partial_y b\|_{L^\infty} \|\Lambda^{s}\Omega\|_{L^2} ) \qquad \text{($\beta=(0,0)$)}\\
\mathcal{J}_2^b & \lesssim \|\cR_1 b\|_{\dot H^s} (\|\Lambda \nabla \partial_y b\|_{L^2} \|\Lambda^{s-3}\nabla\Omega\|_{L^\infty} + \|\nabla \partial_y b\|_{L^\infty} \|\Lambda^{s-2} \nabla \Omega\|_{L^2}) \qquad \text{($|\beta|=1$ and $\beta=(1,1)$)}\\
\mathcal{J}_2^c & \lesssim \|\cR_1 b\|_{\dot H^s} \sum_{2 \le |\beta| \le s-2} (\|\Lambda \nabla^\beta \partial_y b\|_{L^2} \|\Lambda^{s-1, \beta} \Omega\|_{L^\infty} + \|\nabla^\beta \partial_y b\|_{L^2} \|\Lambda^{s, \beta} \Omega\|_{L^\infty}) \qquad \text{($|\beta|\ge 2$)}.
\end{align*}
\medbreak
Concerning the first term of $\mathcal{J}_2$ in \eqref{eq:J2}, one has
\begin{align*}
    \sum_{|\beta|\le s-2} |((\nabla^\beta \partial^2_{xy} b) \Lambda^{s-1,\beta} \Omega , \dot b)_{L^2}|& \lesssim \sum_{|\beta|\le s-2}\|(\nabla^\beta \partial^2_{xy} b) \Lambda^{s-1,\beta} \Omega\|_{L^2} \|b\|_{\dot H^s}.
\end{align*}
For $\beta=(0,0)$, we have
\begin{align*}
    \|(\partial_{xy}^2 b) \La^{s-1} \Omega\|_{L^2} & \lesssim \|\partial_{xy} b\|_{L^\infty} \|\Omega\|_{\dot H^{s-1}}\lesssim \|\cR_1 b\|_{\dot H^{3-\tau} \cap \dot H^{3+\tau}} \|\Omega\|_{\dot H^{s-1}}.
    \end{align*}
The remaining terms yield
    \begin{align*}
    \sum_{|\beta|=1 \, \text{and} \, \beta=(1,1)} |((\nabla^\beta \partial^2_{xy} b) \Lambda^{s-1,\beta} \Omega , \dot b)_{L^2}|& \lesssim \sum_{|\beta|=1 \, \text{and} \, \beta=(1,1)}\|\nabla^\beta \partial_{xy}^2 b\|_{L^2} \|\La^{s-1, \beta} \Omega\|_{L^\infty}\|b\|_{\dot H^s}\\& \lesssim \|b\|_{\dot H^3} \|\Omega\|_{\dot H^{3-\tau} \cap \dot H^{3+\tau}}\|b\|_{\dot H^s}
    \end{align*}
and
\begin{align*}
     \sum_{2\le |\beta|\le s-2} |((\nabla^\beta \partial^2_{xy} b) \Lambda^{s-1,\beta} \Omega , \dot b)_{L^2}|& \lesssim \sum_{2\le |\beta|\le s-2} \|\nabla^\beta \partial^2_{xy} b\|_{L^2} \|\La^{s-1, \beta} \Omega\|_{L^\infty}\|b\|_{\dot H^s}.
\end{align*}

Altogether, appealing to the embeddings of Lemma \ref{AnisoEmbed} and Lemma \ref{BesovSobolev} ($s \ge 3+\tau$), one obtains
\begin{align*}
\mathcal{J}_{2} & \lesssim (\|\cR_1 b\|_{\dot H^{1-\tau}}+\|\cR_1 b\|_{\dot H^s}) (  \|b\|_{\dot H^{1-\tau}}+ \|b\|_{\dot H^{s}})  (  \|\Omega\|_{\dot H^{1-\tau}}+ \|\Omega\|_{\dot H^{s}}),
\end{align*}
so that
\begin{align*}
\int_0^T |\mathcal{J}_2| \, dt & \lesssim (\|\cR_1 b\|_{L^2_T (\dot H^{1-\tau})}+\|\cR_1 b\|_{L^2_T (\dot H^s)})  (  \|b\|_{L^\infty_T (\dot H^{1-\tau})}+ \|b\|_{L^\infty_T (\dot H^{s})})\\
&\quad \times (  \|\Omega\|_{L^2_T (\dot H^{1-\tau})}+ \|\Omega\|_{L^2_T (\dot H^{s})}) \\&\lesssim \mathcal{M}(T)^3.
\end{align*}

Inserting the latter in \eqref{est:C21} together with \eqref{est:J1} and using the embedding $L^\infty \hookrightarrow \text{BMO}$ yields  
\begin{align*}
\int_0^T |(\mathcal{C}_{1,2}, \dot b)_{L^2}|\, dt & \lesssim (\|\Lambda \cR_1 \Omega\|_{L^1_T (L^\infty)} + \|\nabla \cR_1 \Omega\|_{L^1_T( L^\infty)})\mathcal{M}^2(T)+\mathcal{M}^3(T).
\end{align*}

To control the terms $\|\nabla \cR_1\Omega\|_{L^\infty}, \|\Lambda \cR_1 \Omega\|_{L^1_T L^\infty} $, we shall rely on anisotropic Besov spaces in Step II (Section \ref{sec:Linftycontrol}).
\\\\
\textbf{ii) Control of $\mathcal{M}_{1-\tau}(t)$,  $0<\tau < 1$.}
We apply $\La^{1-\tau}$ to system \eqref{eq:system-z2}, yielding
\begin{equation}\label{eq:system-tau}
\left\{
\begin{aligned}
      \d_t \dot b -\varepsilon \cR_1^2 \dot b &= \cR_1 \dot z+ \La^{1-\tau}((\cR_2\Omega, -\cR_1\Omega)\cdot \nabla b), \\\d_t \dot z + \frac{\dot z}{\varepsilon}&=-\varepsilon \cR_1^2 \dot \Omega - \varepsilon \La^{1-\tau}\cR_1((\cR_2\Omega, -\cR_1\Omega)\cdot \nabla b)+\La^{-\tau} ((\cR_2\Omega, -\cR_1\Omega)\cdot (\nabla \La \Omega)).
\end{aligned}
\right.
\end{equation}
Moreover, the equation for $\dot \Omega$ reads
\begin{align*}
    \d_t \dot \Omega + \frac{\dot \Omega}{\varepsilon}&=\La^{-\tau} ((\cR_2\Omega, -\cR_1\Omega)\cdot (\nabla \La \Omega)).
\end{align*}
The linear terms work exactly the same way as before, thus we focus on the nonlinearities. We begin with
\begin{align*}
(\La^{1-\tau}((\cR_2\Omega, -\cR_1\Omega)\cdot \nabla b), \dot b)_{L^2}&=: \mathcal{I}_1+\mathcal{I}_2.
\end{align*}
Let us start with $\mathcal{I}_1$. Appealing to the product estimate (Lemma \ref{lem:product}) with $p_1=r_1=\infty$ yields 
\begin{align*}
     \mathcal{I}_1&=(\La^{1-\tau}(\cR_2\Omega \d_x b), \dot b)_{L^2}  \lesssim ({\|\cR_2 \Omega\|_{L^\infty} \|\d_x b\|_{\dot H^{1-\tau}}+\| \Lambda^{1-\tau}\cR_2 \Omega\|_{L^2} \|\d_x b\|_{L^\infty})\|b\|_{\dot H^{1-\tau}}},
\end{align*}
where interpolation (Lemma \ref{lem:int}) and Young inequality give
\begin{align*}
\|\d_x b\|_{\dot H^{1-\tau}} \le \|\cR_1 b\|_{\dot H^{2-\tau}} \lesssim \|\cR_1 b\|_{\dot H^{1-\tau}}^\theta \|\cR_1 b\|_{\dot H^{s}}^{1-\theta}, \quad \theta=\frac{s+\tau-2}{s+\tau-1}.
\end{align*}
This yields 
\begin{align*}
    \int_0^T   \mathcal{I}_1 \, dt & \lesssim \sqrt \varepsilon(\|\cR_1 b\|_{L^2_T( \dot H^{1-\tau})} +\|\cR_1 b\|_{L^2_T( \dot H^{s})})   \|b\|_{L^\infty_T (\dot H^{1-\tau})} \frac{1}{\sqrt \varepsilon} (\|\Omega\|_{L^2_T( \dot H^{1-\tau})}+\|\Omega\|_{L^2_T (\dot H^s)}) \\&\lesssim \mathcal{M}^3(T).
\end{align*}
We cannot use the same trick for the next term, which gives 
\begin{align*}
     \int_0^T   \mathcal{I}_2 \, dt & \lesssim  \| b\|_{L^\infty_T (\dot H^{1-\tau})}  (\| b\|_{L^\infty_T (\dot H^s)}+ \|b\|_{L^\infty_T (\dot H^{1-\tau})})  \| \cR_1 \Omega\|_{L^1_T( W^{1,\infty})}\\&\lesssim \| u_2 \|_{L^1_T( W^{1,\infty})}\mathcal{M}^2(T) \lesssim X^3(T).
\end{align*}
Next, it is easy to see that $\varepsilon \cR_1 (\cR_2\Omega, -\cR_1 \Omega) \cdot \nabla b$ is very similar to the terms treated before, then it is omitted.

It remains to deal with the last term in the equation of $\dot z$ (which, in the energy estimate, is multiplied both by $\dot z$ and by $\dot \Omega$, but the computations are identical, so that we only detail one case). Using the symmetry of the multiplier $\La^{-\tau}$, one obtains  
\begin{align*}
    \mathcal{I}_3:&=(\La^{-\tau} ((\cR_2\Omega, -\cR_1\Omega)\cdot (\nabla \La \Omega)), \dot z)_{L^2}  = 
    ((\cR_2\Omega, -\cR_1\Omega)\cdot (\nabla \La \Omega), \La^{1-2\tau} z)_{L^2}.
    \end{align*}
Now, we integrate by parts in $x$ the first addend and in $y$ the second one. As the term $\cR_2\partial_x \Omega - \cR_1 \partial_y \Omega=0$ (from the divergence-free condition), it remains 
\begin{align*}
    \mathcal{I}_3
    &=- (\cR_2 \Omega \, \La \Omega , \La^{1-2\tau} \partial_x z)_{L^2} +(\cR_1 \Omega \, \La \Omega, \La^{1-2\tau} \partial_y z)_{L^2}\\
    &= - (\cR_2 \Omega \, \La \Omega, \La^{2(1-\tau)} \cR_1 z)_{L^2} +(\cR_1 \Omega \, \La \Omega, \La^{2(1-\tau)} \cR_2 z)_{L^2}\\
    &= - (\La^{1-\tau}(\cR_2 \Omega \, \La \Omega) , \La^{1-\tau} \cR_1 z)_{L^2} +(\La^{1-\tau}(\cR_1 \Omega \, \La \Omega), \La^{1-\tau} \cR_2 z)_{L^2}\\
    &=:\mathcal{I}_3^a+\mathcal{I}_3^b.
    \end{align*}

This way
\begin{align*}
|\mathcal{I}_3^a|& \le \|\La^{1-\tau}(\cR_2 \Omega \, \La \Omega) \|_{L^2} \|z\|_{\dot H^{1-\tau}},
\end{align*}
and using the product Lemma \ref{lem:product}, Lemma \ref{AnisoEmbed} and Lemma \ref{BesovSobolev},
\begin{align*}
 \|\La^{1-\tau}(\cR_2 \Omega \, \La \Omega) \|_{L^2} &\lesssim \|\cR_2 \Omega\|_{\dot H^{1-\tau}} \| \La \Omega\|_{L^\infty}+ \|\cR_2 \Omega\|_{L^\infty}\| \Lambda\Omega\|_{\dot H^{1-\tau}}\\
 & \lesssim (\|\Omega\|_{\dot H^{1-\tau}}+\|\Omega\|_{\dot H^{s}})^2.
\end{align*}
The estimate of $\mathcal{I}_3^b$ is identical.
Finally,
\begin{align*}
    \int_0^T \mathcal{I}_3 \, dt &\lesssim   \varepsilon (\|\Omega\|_{L^\infty_T (\dot H^{1-\tau})}+ \|\Omega\|_{L^\infty_T( \dot H^{s})}) \frac{1}{\sqrt \varepsilon} (\|\Omega\|_{L^2_T (\dot H^{1-\tau})}+\|\Omega\|_{L^2_T (\dot H^{s})}) \frac{1}{\sqrt \varepsilon} \|z\|_{L^2_T (\dot H^{1-\tau})} \\&\lesssim \varepsilon \mathcal{M}^3(T). 
\end{align*}
The proof of Proposition \ref{Prop:apriori-Sobolev} is concluded.

\end{proof}

 \subsection{II. $L^\infty$ and Lipschitz bounds for $u_2=\cR_1 \Omega$}\label{sec:Linftycontrol}
 The purpose of this section is to prove the following proposition.
\begin{Prop} \label{Propu2}
For $\varepsilon>0$, let $(b,z)$ be a smooth solution of \eqref{eq:system-newvariable}. One has
\begin{align*}
    \|(\nabla \cR_1\Omega,\Lambda \cR_1\Omega)\|_{L^1_T(L^\infty)}+\|\cR_1\Omega\|_{L^1_T(L^\infty)}\lesssim X(0)+ X(t)^2.
\end{align*}
\end{Prop}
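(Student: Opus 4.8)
The plan is to work in anisotropic Besov spaces and to use the second equation of \eqref{eq:bouss} (equivalently the scaled $\Omega$-equation, or more directly the $\Omega$-equation in \eqref{eq:system-newvariable}) together with the damping term $\Omega/\varepsilon$ to gain the integrable-in-time $L^1_T$ control. Concretely, I would apply the anisotropic Littlewood-Paley block $\ddj\ddq^h$ to the equation for $\Omega$ in \eqref{eq:system-newvariable} and perform an $L^2$ energy estimate block-by-block. The linear part contributes the damping $\tfrac1\varepsilon\|\ddj\ddq^h\Omega\|_{L^2}$ and the coupling term $\cR_1 b$; using Duhamel in time on the ODE $\partial_t f_{j,q} + \tfrac1\varepsilon f_{j,q} = (\text{source})_{j,q}$ and then integrating in $t$, the factor $\varepsilon$ produced by integrating the semigroup $e^{-t/\varepsilon}$ is exactly what compensates the loss; summing $2^{j s_1}2^{q s_2}$ over $j,q$ with the weights $(s_1,s_2)=(\tfrac32,\tfrac12)$, $(\tfrac12,\tfrac32)$, and $(\tfrac12,\tfrac12)$ then yields, via the embeddings of Lemma \ref{AnisoEmbed}, the three quantities $\|\nabla\cR_1\Omega\|_{L^1_T(L^\infty)}$, $\|\Lambda\cR_1\Omega\|_{L^1_T(L^\infty)}$, $\|\cR_1\Omega\|_{L^1_T(L^\infty)}$ respectively. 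The point is that $\cR_1\Omega = u_2$ has an extra horizontal Riesz factor, i.e. a gain of $2^{q-j}$ in each block, which is precisely what makes the anisotropic norms $B^{\frac12,\frac32}$ and $B^{-\frac12,\frac52}$ (rather than the isotropic $B^{\frac32,\frac12}$) the natural ones and allows closing without extra regularity.

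Next I would estimate the right-hand sides. The linear term $\cR_1 b$ (multiplied by $\varepsilon$ after Duhamel, or handled together with the scaled system) is controlled by $\|\cR_1 b\|_{L^2_T(B^{\cdot,\cdot})}$, which by the Sobolev–Besov embedding Lemma \ref{BesovSobolev} is bounded by $\sqrt\varepsilon^{-1}\cdot\sqrt\varepsilon\|\cR_1 b\|_{L^2_T(\dot H^{1-\tau}\cap\dot H^s)}$-type quantities already inside $\mathcal{M}(T)\le X(T)$; here one uses $\dot H^{1-\tau}\cap\dot H^s \hookrightarrow B^{s_1,s_2}$ for the relevant $(s_1,s_2)$ with $s_1+s_2$ strictly between $1-\tau$ and $s$ (which holds since $s\ge 3+\tau$, e.g. $s_1+s_2=2$). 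For the nonlinear term $\Lambda^{-1}[(\cR_2\Omega,-\cR_1\Omega)\cdot\nabla\Lambda\Omega]$ I would use the anisotropic Bony paraproduct decomposition and the anisotropic product/commutator laws (the anisotropic Bernstein Lemma \ref{AnisoBernstein} applied in each of the $x_1$, $x_2$ directions), together with $\mathrm{div}\,\mathbf{u}=0$ to rewrite the convection term in divergence form and save a derivative; the resulting bound is quadratic in $\mathcal{M}(T)$ plus terms multiplied by $\|\nabla u_2\|_{L^1_T(L^\infty)}$ or $\|u_2\|_{L^1_T(W^{1,\infty})}$ which are part of $X(T)$, so they are absorbed for small data. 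The transport coefficients $(\cR_2\Omega,-\cR_1\Omega)$ being Lipschitz (again via Lemma \ref{AnisoEmbed}, since $\Omega\in B^{\frac32,\frac12}$ after the $\dot H^s$ control) ensures the paraproduct and remainder terms all close.

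The main obstacle I expect is twofold. First, tracking the $\varepsilon$-uniformity carefully: one must check that every time the damping semigroup is used to produce an $L^1_T$ estimate, the $\varepsilon$ it spends is matched by a $1/\varepsilon$ or $1/\sqrt\varepsilon$ available from the dissipative norms in $\mathcal{M}_r(T)$ (namely $\tfrac1{\sqrt\varepsilon}\|\Omega\|_{L^2_T(\dot H^r)}$ and $\sqrt\varepsilon\|\cR_1 b\|_{L^2_T(\dot H^r)}$), so that no negative power of $\varepsilon$ survives. Second, and more delicate, is the anisotropic bookkeeping in the nonlinear paraproduct estimates: because the norms $B^{\frac12,\frac32}$ and $B^{-\frac12,\frac52}$ have a negative or small isotropic index in the $|\xi|$-direction but a large index in $\xi_1$, one has to be careful that the high-high, high-low and low-high interactions are each summable, which forces a specific choice of which factor carries the $\ddj$ and which carries the $\ddq^h$ localization, and requires the restriction $j\ge k$ (vertical frequency dominated by total frequency) used in the proof of Lemma \ref{AnisoEmbed}. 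Once these are handled, closing the estimate is a bootstrap: $\|(\nabla\cR_1\Omega,\Lambda\cR_1\Omega,\cR_1\Omega)\|_{L^1_T(L^\infty)} \lesssim X(0) + X(T)^2$, which combined with Proposition \ref{Prop:apriori-Sobolev} gives $X(T)\lesssim X(0)+X(T)^2$ and hence global existence for small data by a standard continuity argument.
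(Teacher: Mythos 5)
Your overall framework (anisotropic Littlewood--Paley blocks, Bernstein in each direction, the index choices $(\tfrac32,\tfrac12)$ and $(\tfrac12,\tfrac12)$ feeding the embeddings of Lemma \ref{AnisoEmbed}, and the product laws for the nonlinearity) is the same as the paper's. But there is a genuine gap in your treatment of the linear coupling, and it sits exactly at the point where the whole proposition lives or dies. You propose to apply Duhamel to the $\Omega$-equation of \eqref{eq:system-newvariable} and to absorb the source $\cR_1 b$ using ``$\|\cR_1 b\|_{L^2_T(B^{\cdot,\cdot})}$-type quantities already inside $\mathcal{M}(T)$''. Duhamel gives
$\|\Omega\|_{L^1_T(B)}\lesssim \varepsilon\|\Omega_{\rm in}\|_{B}+\varepsilon\|\cR_1 b\|_{L^1_T(B)}+\varepsilon\|\mathrm{NL}\|_{L^1_T(B)}$,
and the functional $\mathcal{M}_r$ only controls $\sqrt{\varepsilon}\,\|\cR_1 b\|_{L^2_T(\dot H^r)}$. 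Passing from this $L^2_T$ control to the $L^1_T$ control you need costs a factor $\sqrt{T}$ by H\"older, so the resulting bound is not uniform in $T$ and the bootstrap cannot close globally. No choice of $\varepsilon$-weights repairs this: the mismatch is between time-integrabilities, not between powers of $\varepsilon$.

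The paper's fix is precisely the effective unknown $z=\Omega-\varepsilon\cR_1 b$, which you never use. In the $(b,z)$ system \eqref{eq:system-z2} the $z$-equation has only the linear source $-\varepsilon\cR_1^2\Omega$ (absorbable by the $\tfrac1\varepsilon\|z\|_{L^1_T}$ gain together with the $\Omega$-bounds), while the $b$-equation acquires the anisotropic degenerate damping $\varepsilon\cR_1^2 b$, which on the block $\ddj\ddq^h$ reads $\varepsilon\,2^{2q-2j}$ and therefore yields the maximal-regularity-type control $\varepsilon\|b\|_{L^1_T(B^{s_1-2,s_2+2})}$; the source $\cR_1 z$ in the $b$-equation is then genuinely $L^1_T$-integrable thanks to the strong damping on $z$. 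Reconstructing $\Omega=z+\varepsilon\cR_1 b$ gives $\|\Omega\|_{L^1_T(B^{\frac12,\frac32})}\lesssim \varepsilon\cdot\tfrac1\varepsilon\|z\|_{L^1_T(B^{\frac32,\frac12})}+\varepsilon\|b\|_{L^1_T(B^{-\frac12,\frac52})}$, both of which are controlled with no loss in $T$. Note that this damping on $b$ simply does not exist in the original $(b,\Omega)$ variables ($\partial_t b-\cR_1\Omega=h$ has no dissipative linear term), so the change of unknown is not a cosmetic choice but the mechanism that produces the integrable-in-time decay of $u_2$; this is also why the proposition is stated for ``$(b,z)$ a smooth solution''. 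Your remarks on the $2^{q-j}$ gain from $\cR_1$ and on the anisotropic bookkeeping in the paraproducts are correct and consistent with the paper's nonlinear estimates (Lemmas \ref{AnisoPL} and \ref{NLControl}); only the linear step needs to be replaced as above.
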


 \subsubsection{Linear a priori estimates in anisotropic spaces} \label{sec:BesovPart}
 
 Applying $\ddj\ddq^h$ to the linear part of \eqref{eq:system-z2}, we obtain
\begin{equation}
\left\{
\begin{aligned}
    &\d_t b_{j,q} - \varepsilon\cR_1^2 b_{j,q}  = \cR_1z_{j,q} +h_{j,q}, \\
    &\d_t z_{j,q} + \frac{z_{j,q}}{\varepsilon}=-\varepsilon\cR_1^2\Omega_{j,q} +g_{j,q}, \label{LinearBesov1}
\end{aligned}
\right.
\end{equation}
where $h=(\cR_2 \Omega, - \cR_1 \Omega) \cdot \nabla b$ and $g=-\varepsilon \cR_1 (\cR_2 \Omega, -\cR_1\Omega) \cdot \nabla b + \Lambda^{-1} ((\cR_2 \Omega, -\cR_2\Omega) \cdot (\nabla \Lambda \Omega))$.
Performing standard energy estimates, one obtains
\begin{align}\label{BeforeBern}
   \frac{1}{2} \dfrac{d}{dt}\|b_{j,q}\|_{L^2}^2+\varepsilon\|\cR_1 b_{j,q}\|_{L^2}^2\leq \|\cR_1 z_{j,q}\|_{L^2}\|b_{j,q}\|_{L^2}+ \|h_{j,q}\|_{L^2}\|b_{j,q}\|_{L^2}.
\end{align}
Using Fourier-Plancherel theorem and the anisotropic Bernstein inequality in Lemma \ref{AnisoBernstein} yields
\begin{align}\label{Bern}
   \frac{1}{2} \dfrac{d}{dt}\|b_{j,q}\|_{L^2}^2+2^{-2j}2^{2q}\varepsilon\| b_{j,q}\|_{L^2}^2\leq \|\cR_1z_{j,q}\|_{L^2}\|b_{j,q}\|_{L^2}+\|h_{j,q}\|_{L^2}\|b_{j,q}\|_{L^2}.
\end{align}
Now, applying Lemma \ref{SimpliCarre}, multiplying by $2^{js_1}2^{qs_2}$ and summing on $j,k\in \Z$ give
\begin{align}\label{eq:bBesov01}
    \|b\|_{L^\infty_T(B^{s_1,s_2})}+ \varepsilon\|b\|_{L^1_T(B^{s_1-2,s_2+2})} \lesssim \|b_0\|_{B^{s_1,s_2}}+\|z\|_{B^{s_1-1,s_2+1}} +\|h\|_{L^1_T(B^{s_1,s_2})}
\end{align} Following a similar procedure for $z$, one infers
\begin{align}\label{eq:zBesov02}
    \|z\|_{L^\infty_T(B^{s_1',s_2'})}+ \frac{1}{\varepsilon}\|z\|_{L^1_T(B^{s_1',s_2'})} \lesssim \|z_0\|_{B^{s_1',s_2'}}+\varepsilon\|\Omega\|_{B^{s_1-2,s_2+2}} +\|g\|_{L^1_T(B^{s_1',s_2'})}.
\end{align}

Notice that the linear term $\|z\|_{B^{s_1-1,s_2+1}}$ in \eqref{eq:bBesov01} can be absorbed by the left-hand side of \eqref{eq:zBesov02} if $s_1'\geq s_1$ and $s_2'\leq s_2$ via Lemma \ref{InclusBesov} with $s=1$.
The linear term $\|\Omega\|_{B^{s_1-2,s_2+2}}$ will be absorbed in a similar fashion once the estimates for $\Omega$ are obtained.

Since $\Omega=z-\varepsilon\cR_1b$, this linear analysis together with  Lemma \ref{AnisoEmbed} suggests us to choose
\begin{align*}
    &s_1=s_1'=\frac32\andf s_2=s_2'=\frac12, \text{ to ensure that one controls}  \|\nabla \cR_1\Omega\|_{L^1_T(L^\infty)}, 
    \\& s_1=s_1'=\frac12 \andf \:s_2=s_2'=\frac12,\text{ to ensure that one controls }  \| \cR_1\Omega\|_{L^1_T(L^\infty)}.
\end{align*}
\begin{Rmq}
The term $\| \cR_1\Omega\|_{L^1_T(L^\infty)}$ is not needed to close the a priori estimates in Sobolev spaces (proof of Proposition \ref{Prop:apriori-Sobolev}), but it is actually crucial to deal with some nonlinear terms appearing in this anisotropic analysis. The control of $\| \cR_1\Omega\|_{L^1_T(W^{1,\infty})}$ requires a priori estimates in two different regularity settings (as just remarked above).
\end{Rmq}
With these regularity indexes, one obtains
\begin{align*}
    &\|b\|_{L^\infty_T(B^{\frac32,\frac12})}+ \varepsilon\|b\|_{L^1_T(B^{-\frac12,\frac52})}+\sqrt{\varepsilon}\|b\|_{L^2_T(B^{\frac12,\frac32})}  \lesssim \|b_0\|_{B^{\frac32,\frac12}} +\|h\|_{L^1_T(B^{\frac32,\frac12})},
\\
   & \|b\|_{L^\infty_T(B^{\frac12,\frac12})}+ \varepsilon\|b\|_{L^1_T(B^{-\frac32,\frac52})}+\sqrt{\varepsilon}\|b\|_{L^2_T(B^{-\frac12,\frac32})} \lesssim \|b_0\|_{B^{\frac12,\frac12}} +\|h\|_{L^1_T(B^{\frac12,\frac12})},
\end{align*}
and
\begin{align*}
   & \|z\|_{L^\infty_T(B^{\frac32,\frac12})}+ \frac1\varepsilon\|z\|_{L^1_T(B^{\frac32,\frac12})} +\frac{1}{\sqrt{\varepsilon}}\|z\|_{L^2_T(B^{\frac32,\frac12})}  \lesssim\|z_0\|_{B^{\frac32,\frac12}} +\|g\|_{L^1_T(B^{\frac32,\frac12})},
   \\&
    \|z\|_{L^\infty_T(B^{\frac12,\frac12})}+ \frac1\varepsilon\|z\|_{L^1_T(B^{\frac12,\frac12})}+\frac{1}{\sqrt{\varepsilon}}\|z\|_{L^2_T(B^{\frac12,\frac12})} \lesssim \|z_0\|_{B^{\frac12,\frac12}} +\|g\|_{L^1_T(B^{\frac12,\frac12})},
\end{align*}
where we used interpolation inequalities to recover the $L^2_T$ terms. Since $\Omega=z-\varepsilon\cR_1b$, one derives the following bounds
\begin{align*}
    &\|\Omega\|_{L^\infty_T(B^{\frac32,\frac12})}+\|\Omega\|_{L^1_T(B^{\frac12,\frac32})} +\|\Omega\|_{L^2_T(B^{\frac32,\frac12})}  \lesssim \|(\Omega_0,b_0)\|_{B^{\frac32,\frac12}} +\|(g,h)\|_{L^1_T(B^{\frac32,\frac12})},
\\&
    \|\Omega\|_{L^\infty_T(B^{\frac12,\frac12})}+\|\Omega\|_{L^1_T(B^{-\frac12,\frac32})} +\|\Omega\|_{L^2_T(B^{\frac12,\frac12})}  \lesssim\|(\Omega_0,b_0)\|_{B^{\frac12,\frac12}} +\|(g,h)\|_{L^1_T(B^{\frac12,\frac12})}.
\end{align*}
As expected, via Lemma \ref{AnisoEmbed} one has
\begin{align}
  \|\nabla \cR_1\Omega\|_{L^1_T(L^\infty)} \lesssim  \|\Omega\|_{L^1_T(B^{\frac12,\frac32})}\andf
    \|\cR_1 \Omega\|_{L^1_T(L^\infty)}\lesssim \|\Omega\|_{L^1_T(B^{-\frac12,\frac32})}.
\end{align}
Accordingly, we define the functional
\begin{align}\label{YFunc}
    Y(t)&= \|(b,z,\Omega)\|_{L^\infty_T(B^{\frac12,\frac12}\cap B^{\frac32,\frac12})}+\varepsilon \|b\|_{L^1_T(B^{-\frac32,\frac52}\cap B^{-\frac12,\frac52})}+\sqrt{\varepsilon}\|b\|_{L^2_T(B^{-\frac12,\frac32}\cap B^{\frac12,\frac32})}
    \notag\\ &\quad +\frac1\varepsilon \|z\|_{L^1_T(B^{\frac32,\frac12}\cap B^{\frac12,\frac12})}+\frac{1}{\sqrt{\varepsilon}}\|z\|_{L^2_T(B^{\frac32,\frac12}\cap B^{\frac12,\frac12})}\notag\\
    &\quad 
   + \|\Omega\|_{L^1_T(B^{\frac12,\frac32}\cap B^{-\frac12,\frac32})}+\|\Omega\|_{L^2_T(B^{\frac32,\frac12}\cap B^{\frac12,\frac12})},
\end{align} 
which will be useful to close the estimates.

 \subsubsection{Estimates of the nonlinearities}
 
The term
 $\|(h,g)\|_{L^1_T(B^{\frac32,\frac12}\cap B^{\frac12,\frac12})}$ is bounded by the following lemma.
 \begin{Lemme} \label{NLControl}
Let $(b,\Omega)$ be a smooth solution of \eqref{eq:system-newvariable}, then one has the following estimate:
\begin{align*}
   \|(h,g)\|_{L^1_T(B^{\frac32,\frac12}\cap B^{\frac12,\frac12})} \lesssim Y(t)^2+Y(t)X(t)+X(t)^2
\end{align*}
\end{Lemme}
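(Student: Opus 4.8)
The plan is to exploit the divergence-free structure of the transport velocity. Writing $\mathbf u=(-\cR_2\Omega,\cR_1\Omega)$, so that $(\cR_2\Omega,-\cR_1\Omega)=-\mathbf u$ and $u_2=\cR_1\Omega$, the identity $\cR_2\partial_x\Omega-\cR_1\partial_y\Omega=0$ gives
$$h=-\operatorname{div}(b\,\mathbf u),\qquad g=\varepsilon\,\cR_1\operatorname{div}(b\,\mathbf u)-(\cR_1,\cR_2)\cdot\big((\La\Omega)\,\mathbf u\big),$$
so $h$ and $g$ are, up to an explicit operator of order $\le1$ built from $\partial_x,\partial_y,\cR_1,\cR_2,\La^{-1}$ (and a prefactor $\varepsilon$ on the $\cR_1\operatorname{div}$ piece), applied to the two quadratic quantities $b\,\mathbf u$ and $(\La\Omega)\,\mathbf u$. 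By Lemma \ref{AnisoBernstein} these operators act on the anisotropic scale in an explicit way, $\partial_x:B^{\sigma_1,\sigma_2}\to B^{\sigma_1,\sigma_2-1}$, $\cR_1:B^{\sigma_1,\sigma_2}\to B^{\sigma_1+1,\sigma_2-1}$, $\La^{-1}:B^{\sigma_1,\sigma_2}\to B^{\sigma_1+1,\sigma_2}$, while $\partial_y$ and $\cR_2$ send $B^{\sigma_1,\sigma_2}$ into $B^{\sigma_1-1,\sigma_2}$ and $B^{\sigma_1,\sigma_2}$ respectively. Hence it suffices to bound $b\,\mathbf u$ and $(\La\Omega)\,\mathbf u$ in $L^1_T$ of finitely many anisotropic spaces $B^{\sigma_1,\sigma_2}$ whose total index $\sigma_1+\sigma_2$ never exceeds $3$ (and never exceeds $2$ for the $B^{\frac12,\frac12}$ targets).

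For each product I would apply the anisotropic Bony decomposition $fg=T_fg+T_gf+R(f,g)$ and estimate every piece via Lemma \ref{AnisoBernstein} and the $L^\infty$-type embeddings of Lemma \ref{AnisoEmbed}, assigning the factors to three kinds of norms. The ``good'' component $u_2=\cR_1\Omega$ and its first derivatives always go into $L^1_T(L^\infty)$/$L^1_T(B^{\frac32,\frac12})$ norms, controlled by $\|\Omega\|_{L^1_T(B^{-\frac12,\frac32}\cap B^{\frac12,\frac32})}\lesssim Y(t)$ through Lemma \ref{AnisoEmbed}, as well as by $\|\nabla u_2\|_{L^1_T(L^\infty)}+\|\La u_2\|_{L^1_T(L^\infty)}\le X(t)$ from the definition \eqref{def:X}. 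The regularity-carrying factors $b,\La\Omega,\Omega$ go into $L^\infty_T(B^{\sigma_1,\sigma_2})$ with $1-\tau<\sigma_1+\sigma_2<s$, $\sigma_2>0$, controlled by $\|(b,\Omega)\|_{L^\infty_T(\dot H^{1-\tau}\cap\dot H^s)}\lesssim X(t)$ via the embedding $\dot H^{1-\tau}\cap\dot H^s\hookrightarrow B^{\sigma_1,\sigma_2}$ of Lemma \ref{BesovSobolev} — this is where $s>3$ enters, since total indices up to $3$ occur. Finally, the couplings with no $u_2$-type factor (the $u_1=-\cR_2\Omega$ terms, and the genuine $\Omega\,\nabla\Omega$-type pieces of $g$) are split by Cauchy–Schwarz in time as $L^2_T\times L^2_T$, pairing $\sqrt\varepsilon\,\|b\|_{L^2_T(B^{\pm\frac12,\frac32})}+\|\Omega\|_{L^2_T(B^{\frac12,\frac12}\cap B^{\frac32,\frac12})}\lesssim Y(t)$ against $\frac{1}{\sqrt\varepsilon}\|\Omega\|_{L^2_T(\dot H^{1-\tau}\cap\dot H^s)}+\sqrt\varepsilon\,\|\cR_1b\|_{L^2_T(\dot H^{1-\tau}\cap\dot H^s)}\lesssim X(t)$, so that the powers of $\varepsilon$ cancel (a $\frac{1}{\sqrt\varepsilon}$-weighted high-Sobolev $L^2_T$ norm of $\Omega$ always meets a $\sqrt\varepsilon$-weighted $L^2_T$ norm of $b$, or the explicit $\varepsilon$ in front of $\cR_1\operatorname{div}(b\mathbf u)$). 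Summing all contributions, and using $\varepsilon\le\frac12$, yields $\|(h,g)\|_{L^1_T(B^{\frac32,\frac12}\cap B^{\frac12,\frac12})}\lesssim Y(t)^2+Y(t)X(t)+X(t)^2$. The term $-\varepsilon\cR_1h$ in $g$ is the mildest: $\|\cR_1h\|_{B^{\frac32,\frac12}\cap B^{\frac12,\frac12}}=\|h\|_{B^{\frac12,\frac32}\cap B^{-\frac12,\frac32}}$ has strictly lower total index and carries an $\varepsilon$, so it is absorbed by the same analysis.

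The main obstacle is the bilinear contribution carrying the \emph{vertical} derivative $\partial_y b$: unlike $\partial_x b=\La\cR_1b$, it is not seen by the anisotropic (horizontal) dissipation, so it has no $L^2_T$ smoothing and a direct product estimate would cost a factor $\varepsilon^{-1/2}$ that cannot be absorbed uniformly in $\varepsilon$. The divergence-free reformulation $h=-\operatorname{div}(b\,\mathbf u)$ is precisely what cures this: it shifts the bare derivative off $b$ so that it can be distributed onto the velocity, whose Lipschitz norm is time-integrable — so the quantity $\|\nabla u_2\|_{L^1_T(L^\infty)}$, equivalently $\|\Omega\|_{L^1_T(B^{\frac12,\frac32})}$, is indispensable here, and its availability is exactly the new feature of this paper. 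The second, more clerical, obstacle is the joint bookkeeping of the two anisotropic indices and the time-Lebesgue index: in each term the Bony splitting must be chosen so that the output lands in precisely $B^{\frac32,\frac12}$ (resp. $B^{\frac12,\frac12}$), never producing a forbidden $L^2_T\times L^1_T$ pairing and never requiring more than $\dot H^s$ regularity — and the way $X(t)$ and $Y(t)$ are designed, with $b$'s $L^2_T$ norms weighted by $\sqrt\varepsilon$ and $\Omega$'s and $z$'s high-Sobolev $L^2_T$ norms weighted by $\varepsilon^{-1/2}$, is what forces the dangerous cross terms to come out independent of $\varepsilon$.
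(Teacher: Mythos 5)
Your toolkit is the right one (anisotropic Bony decomposition, Lemmas \ref{AnisoBernstein}--\ref{AnisoEmbed}, the $L^1_T(L^\infty)$ integrability of the vertical velocity, and the cancellation of the $\sqrt\varepsilon$-weights in the $L^2_T\times L^2_T$ pairings), and several of your bullet assignments match what the paper does. But the structural choice on which your plan rests --- rewriting $h=-\div(b\,\mathbf u)$, pulling the derivative outside, and reducing to bounds on $b\,\mathbf u$ at total anisotropic index up to $3$ --- introduces a term that the functionals $X$ and $Y$ cannot close. Concretely, $\|\partial_y(bu_2)\|_{B^{\frac32,\frac12}}$ forces you to bound $\|bu_2\|_{B^{\frac52,\frac12}}$, and in the paraproduct $T_b u_2$ (low frequencies of $b$ against high frequencies of $u_2$) the full total index $3$ falls on the velocity: you need either $\|b\|_{L^\infty_{T,x}}\,\|\cR_1\Omega\|_{L^1_T(B^{\frac52,\frac12})}\sim\|b\|_{L^\infty_{T,x}}\,\|\Omega\|_{L^1_T(B^{\frac32,\frac32})}$, or $\|b\|_{L^2_T(L^\infty)}\,\|\Omega\|_{L^2_T(B^{\frac32,\frac32})}$. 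Neither closes: $Y$ only controls $\Omega$ in $L^1_T$ at total index $2$ (namely $B^{\frac12,\frac32}\cap B^{-\frac12,\frac32}$), and $\|b\|_{L^2_T(B^{\frac12,\frac12})}$ is \emph{not} controlled uniformly in $\varepsilon$ --- the energy only yields $\sqrt\varepsilon\|\cR_1 b\|_{L^2_T}$ and $\sqrt\varepsilon\|b\|_{L^2_T(B^{\pm\frac12,\frac32})}$, and $B^{\frac12,\frac32}\not\hookrightarrow B^{\frac12,\frac12}$ at low horizontal frequencies, which is exactly where the dissipation $\varepsilon\cR_1^2$ degenerates. So the divergence-form reduction, rather than curing the $\partial_y b$ difficulty, relocates the bare derivative onto the velocity in precisely the paraproduct where that is fatal.

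The paper's proof keeps the products $\cR_1\Omega\,\partial_y b$ and $\cR_2\Omega\,\partial_x b$ as they stand and applies the tailored product law (Lemma \ref{AnisoPL}). The point is that then the paraproduct where the velocity is the high-frequency factor only ever requires $u_2$ at total index $2$, i.e.\ $\|\cR_1\Omega\|_{L^1_T(B^{\frac32,\frac12})}\lesssim\|\Omega\|_{L^1_T(B^{\frac12,\frac32})}\leq Y(t)$, against $\|\partial_y b\|_{L^\infty_T(L^\infty)}$; while in the other paraproduct the derivative stays on $b$, which is placed in $L^\infty_T(B^{\frac32,\frac12})$ --- that is, $b$ at total regularity $3$, absorbed by $\dot H^{1-\tau}\cap\dot H^s$ with $s>3$ via Lemma \ref{BesovSobolev} --- against $\|\cR_1\Omega\|_{L^1_T(L^\infty)}$, the \emph{undifferentiated} velocity, controlled by $\|\Omega\|_{L^1_T(B^{-\frac12,\frac32})}$. (This last quantity, not $\|\nabla u_2\|_{L^1_T(L^\infty)}$, is the one the paper's Remark singles out as indispensable here; your claim that a direct product estimate would cost an unabsorbable $\varepsilon^{-1/2}$ on the $\partial_y b$ term is therefore not accurate.) To repair your argument you would have to refrain from pulling the divergence out globally and instead commute it only through the paraproduct where $b$ is the low-frequency factor, which brings you back to the paper's term-by-term treatment.
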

The proof of Lemma \ref{NLControl} is postponed to Section \ref{sec:NL-PL} and it is based on product laws in anisotropic Besov spaces (again, postponed to  Section \ref{sec:NL-PL}).

\begin{proof}[Proof of Proposition \ref{Propu2}]
It is a direct consequence of Lemma \ref{NLControl}.
\end{proof}


 \subsection{Conclusion of the proof of Theorem \ref{Thm:Exist}}
 
 Gathering the estimates from Proposition \ref{Prop:apriori-Sobolev} and \ref{Propu2}, we obtain
 \begin{align}
     X(t)\lesssim X(0)+X(t)^3+X(t)^2+Y(t)X(t)+Y(t)^2.
 \end{align}
 Then, by Lemma \ref{BesovSobolev}, one has $$Y(t)\lesssim X(t),$$ which yields
  \begin{align}
     X(t)\lesssim X(0)+X(t)^2+X(t)^3.
 \end{align}
 From there, a standard bootstrap argument leads to the existence of global-in-time solutions of \eqref{eq:system-newvariable}. Then the uniqueness follows from stability estimate below and Theorem \ref{Thm:Exist} is proven.
 
 \subsection{Uniqueness: Stability estimate}
Let $(\rho_1,\rho_2)$ be two solutions of $\eqref{eq:IPM}$ associated to the same initial data. We define $w=\rho_1-\rho_2$, it satisfies
\begin{equation}\label{eq:w}
    \d_tw-\cR^2_1w=(\cR_2 \cR_1 \rho_1 )\pa_x \rho_1 - (\cR_1^2 \rho_1) \pa_y \rho_1-(\cR_2 \cR_1 \rho_2) \pa_x \rho_2 + (\cR_1^2 \rho_2) \pa_y \rho_2.
\end{equation}
The right-hand side terms may be rewritten as follows:
\begin{equation}\label{eq:w3}
    \d_tw-\cR^2_1w=(\cR_2 \cR_1 w,-\cR_1^2w)\cdot \nabla \rho_1+(\cR_2 \cR_1 \rho_2,-\cR_1^2\rho_2)\cdot \nabla w.
\end{equation}
Applying $\La^s=(-\Delta)^\frac{s}{2}$ to \eqref{eq:w3} and using the notation $\dot w=\La^s w$, one infers that
\begin{equation}\label{eq:w4}
    \d_t\dot{w}-\cR^2_1\dot{w}=\Lambda^{s}((\cR_2 \cR_1 w,-\cR_1^2w)\cdot \nabla \rho_1)+\Lambda^s((\cR_2 \cR_1 \rho_2,-\cR_1^2\rho_2)\cdot \nabla w).
\end{equation}
Combining the use of commutator estimates for the nonlinear terms (as in the previous section) with the bounds on $(\rho_1,\rho_2)$ from Theorem \ref{Thm:Exist} and the Gronwall inequality easily gives $w=0$, from which the uniqueness of smooth solutions for $\eqref{eq:IPM}$ follows. Analogous arguments lead to the uniqueness of smooth solutions to system \eqref{eq:system-newvariable}.

  \section{Proof of the relaxation limit Theorem \ref{Thm:Relax}}
Recall the scaled variables below: $$(\widetilde{b}^\varepsilon,\widetilde{\Omega}^\varepsilon)(\tau,x)
\triangleq(b,\frac{\Omega}{\varepsilon})(t,x)
\with \tau=\varepsilon t.$$
The system reads:
\begin{equation}\label{eq:boussEps}
\left\{
\begin{aligned}
    \pa_t \widetilde{b}^\varepsilon - \cR_1 \widetilde{\Omega}^\varepsilon & = (\cR_2 \widetilde{\Omega}^\varepsilon) \pa_x \widetilde{b}^\varepsilon - (\cR_1 \widetilde{\Omega}^\varepsilon) \pa_y \widetilde{b}^\varepsilon, \\
    \varepsilon^2\pa_t \widetilde{\Omega}^\varepsilon - \cR_1 \widetilde{b}^\varepsilon + \widetilde{\Omega}^\varepsilon&=\varepsilon^2\Lambda^{-1} [(\cR_2 \wt \Omega^\varepsilon, - \cR_1 \wt\Omega^\varepsilon) \cdot (\nabla \Lambda\wt\Omega^\varepsilon)].
\end{aligned}
\right.
\end{equation}

Let $(\wt b^\varepsilon,\wt \Omega^\varepsilon)$ be the unique solution of \eqref{eq:boussEps} from Theorem \ref{Thm:Exist}.
Under such rescaling it satisfies the following estimate:
 \begin{align} \label{eq:scalelimit}
&\|\wt b^\varepsilon\|_{L^\infty_T({\dot H^{1-\tau}\cap\dot H}^s)}+\varepsilon\|\wt \Omega^\varepsilon\|_{L^\infty_T(\dot H^{1-\tau}\cap{\dot H}^s)}+ \|\cR_1b\|_{L^2_T(\dot H^{1-\tau}\cap{\dot H}^s)}+\|\Omega\|_{L^2_T(\dot H^{1-\tau}\cap{\dot H}^s)}
\\&\nonumber\hspace{8cm}+ \|\wt\Omega^\varepsilon-\cR_1\wt b^\varepsilon\|_{L^2_T(\dot H^{1-\tau}\cap{\dot H}^s)}\leq \widetilde{\mathcal{M}}_0.
 \end{align}
Owing to \eqref{eq:scalelimit}, $\varepsilon \wt \Omega^\varepsilon$ and $\wt \Omega^\varepsilon$ (and therefore $\cR_1\Omega$ and $\cR_2 \Omega$) are uniformly bounded in the spaces
$L^\infty(\R^+;\dot H^{1-\tau}\cap\dot H^s)$ and $L^2(\R^+;\dot H^{1-\tau}\cap\dot H^s),$ respectively. This implies  that
$$\varepsilon^2\Lambda^{-1} [(\cR_2 \wt \Omega^\varepsilon, - \cR_1 \wt\Omega^\varepsilon) \cdot (\nabla \Lambda\wt\Omega^\varepsilon)]\quad\hbox{in }\ L^2(\R^+;\dot H^{s-1}).$$
Therefore $\varepsilon^2\d_t\wt \Omega^\varepsilon$ goes to $0$ in the sense of distributions. Putting this information into the second equation of \eqref{eq:boussEps}, one infers that
  \begin{equation}\label{eq:weakCvg}\wt \Omega^\varepsilon-\wt \cR_1b^\varepsilon \rightharpoonup 0 \ \text{  in  }\  \mathcal{D}'(\mathbb{R}^+\times\mathbb{R}^d).
  \end{equation}
Concerning the other unknown, 
$\widetilde{b}^\varepsilon$ is uniformly bounded in $L^\infty(\R^+;\dot H^{1-\tau}\cap \dot H^{s}).$ Therefore, there exists $\rho \in L^\infty(\R^+;\dot H^{1-\tau}\cap \dot H^{s})$ such that, up to subsequence, 
 \begin{equation}\label{eq:weakn}\wt{b}^\varepsilon\overset{\ast}{\rightharpoonup} \rho\ \text{ in  }\  L^\infty(\R^+;\dot H^{1-\tau}\cap \dot H^{s}).\end{equation}
Then, as the bounds from \eqref{eq:scalelimit} easily ensure bounds for the time-derivative of the solution $(\d_t \wt b^\varepsilon,\d_t \wt \Omega^\varepsilon)$, a standard procedure involving compactness argument and Aubin-Lions lemma leads to $\wt b^\varepsilon\to\rho$ strongly \footnote{At first, the convergence takes place only for a subsequence, then it is deduced for the whole sequence because the limit system has a unique solution so all the sequences will converge to the same limit.} in $C([0,T],{\dot H}^{1-\tau'}_{loc}\cap{\dot H}^{s-s'}_{loc})$ for $0<\tau<\tau'<1$ and $0<s'<s$.
For more details, we refer to Coulombel and Lin in \cite{CoulombelLin} or Xu and Wang in \cite{XuWang} for the relaxation limit of the compressible Euler system with damping in the inhomogeneous Sobolev and Besov settings respectively.
Now, defining
\begin{equation}\label{eq:Wepsilon}
\wt Z^\varepsilon:=\wt \Omega^\varepsilon-\wt \cR_1 b^\varepsilon
\end{equation} the first equation of \eqref{eq:boussEps} may be rewritten as
\begin{equation}\label{eq:safd}
\partial_t\wt b^\varepsilon-\cR_1^2\wt b^\varepsilon=
\wt S^\varepsilon\with \wt S^\varepsilon= \cR_1 \wt z^\varepsilon + (\cR_2 \wt \Omega^\varepsilon, - \cR_1 \wt \Omega^\varepsilon) \cdot \nabla \wt b^\varepsilon.
\end{equation}
Hence, combining \eqref{eq:scalelimit}, \eqref{eq:weakCvg}, \eqref{eq:weakn} and \eqref{eq:Wepsilon}, 
one can deduce that
\begin{equation}\label{CauchyPbM}\partial_t\rho-\cR_1^2\rho=-(\cR_2 \cR_1 \rho, - \cR_1^2 \rho) \cdot \nabla \rho\end{equation}
which concludes the proof of Theorem \ref{Thm:Relax}.

   \appendix
     \bigbreak

  \section{} \label{sec:NL-PL}
  \begin{Lemme}[Anisotropic product laws] \label{AnisoPL}
Let $-\frac12\leq s_1 \leq \frac52$ and $-\frac12 \leq s_2 \leq \frac12$. Let $\delta_1,\delta_2,\delta_3,\delta_4\geq 0$ and $f,g$ be two smooth functions, one has
\begin{align*}
   \|\ddj\ddq^h (fg)\|_{B^{s_1,s_2}}&\lesssim \|f\|_{L^\infty}\|g\|_{B^{s_1,s_2}}+\|f\|_{B^{s_1,s_2}}\|g\|_{L^\infty} \\&\quad +\|f\|_{B^{s_1+\frac12+\delta_1,-\delta_2}}\|g\|_{B^{-\delta_1,s_2+\frac12+\delta_2}}+  \|f\|_{B^{-\delta_3,s_2+\frac12+\delta_4}}  \|g\|_{B^{s_1+\frac12+\delta_3,-\delta_4}}.
\end{align*}
Moreover, 
\begin{align*}
   \|\ddj\ddq^h (fg)\|_{B^{-\frac12,\frac12}}&\lesssim \|f\|_{L^\infty}\|g\|_{B^{-\frac12,\frac12}}+\|f\|_{B^{\frac12,\frac12}}\|\Lambda^{-1} g\|_{L^\infty} \\&\quad +\|f\|_{B^{\delta_1,-\delta_2}}\|g\|_{B^{-\delta_1,1+\delta_2}}+  \|f\|_{B^{-\delta_3,1+\delta_4}}  \|g\|_{B^{\delta_3,-\delta_4}}.
\end{align*}
\end{Lemme}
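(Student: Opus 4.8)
The proof follows the standard anisotropic paraproduct calculus developed in \cite{CheminZhangAniso, PaicuAniso, LinZhang2DMHD}, so I will only describe its architecture. The plan is to expand the product $fg$ via Bony's decomposition carried out simultaneously in the isotropic frequency variable (the blocks $\ddj$, with associated low-frequency partial sums $\dot S_j=\sum_{j'\le j-1}\dot\Delta_{j'}$) and in the horizontal frequency variable (the blocks $\ddq^h$, with partial sums $\dot S_q^h$). Writing $f=\sum_{j',q'}\dot\Delta_{j'}\dot\Delta_{q'}^h f$ and similarly for $g$, and sorting the four frequency indices, one obtains a fixed number of contributions of two kinds: \emph{paraproduct} pieces, in which one factor is frequency-localized (in $|\xi|$ and/or in $\xi_1$) while the other is replaced by a low-frequency partial sum, and \emph{remainder} pieces, in which the two factors carry comparable frequencies in at least one direction and therefore the output frequency may be strictly smaller than the input one.

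First I would treat the paraproduct pieces by keeping the localized factor in $L^2$ and the low-frequency partial sum of the other factor in $L^\infty$, using that the operators $\dot S_j$, $\dot S_q^h$ are uniformly bounded on $L^\infty$. Multiplying by $2^{js_1}2^{qs_2}$ and summing, the finitely many shifts between the output index and the localized input index are harmless --- only the boundedness of $s_1,s_2$ in the stated ranges is used here, not their precise values --- and one recovers exactly $\|f\|_{L^\infty}\|g\|_{B^{s_1,s_2}}+\|f\|_{B^{s_1,s_2}}\|g\|_{L^\infty}$. In the second inequality the isotropic target index $s_1=-\tfrac12$ is negative, so the paraproduct in which $g$ sits at low isotropic frequency cannot be closed this way; instead one estimates $\|\dot S_j g\|_{L^\infty}=\|\dot S_j\Lambda(\Lambda^{-1}g)\|_{L^\infty}\lesssim 2^{j}\|\Lambda^{-1}g\|_{L^\infty}$, which trades the non-summable weight $2^{-j/2}$ for the summable $2^{j/2}$ and produces the term $\|f\|_{B^{\frac12,\frac12}}\|\Lambda^{-1}g\|_{L^\infty}$.

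Next I would treat the remainder pieces. Since the output frequency can drop below the common input frequency, reindexing the resulting sums costs essentially half a derivative in each direction; I would pay for this with the anisotropic Bernstein inequalities of Lemma~\ref{AnisoBernstein}, passing the factor that is being summed from $L^2_{x_1}$ to $L^\infty_{x_1}$ at the cost of $2^{q'/2}$ and, exploiting the slice decomposition in $x_2$, from $L^2$ to $L^\infty$ in the isotropic variable at the cost of $2^{j'/2}$. These two losses, distributed between the two factors (with the free parameters $\delta_i\ge0$ allowing regularity to be reshuffled between the isotropic and horizontal indices of a single factor), are exactly what produces the cross terms $\|f\|_{B^{s_1+\frac12+\delta_1,-\delta_2}}\|g\|_{B^{-\delta_1,s_2+\frac12+\delta_2}}$ and its mirror image. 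It is at this point that the full strength of the hypotheses $-\tfrac12\le s_2\le\tfrac12$ and $-\tfrac12\le s_1\le\tfrac52$ is consumed: these are precisely the ranges in which all the geometric series over the shifted frequency indices converge (including the borderline isotropic index $-\tfrac12$, handled as above). The second displayed inequality is then obtained by running the same argument with $(s_1,s_2)=(-\tfrac12,\tfrac12)$.

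The main obstacle --- and the only place where real care is required --- is precisely this last bookkeeping: one must check, contribution by contribution, that in every remainder term at least one of the two factors can be placed in $L^\infty$ in $x_1$ (and, when needed, in $L^\infty$ in the isotropic scale) so that the weighted double sums over the two frequency mismatches remain summable under the stated constraints on $s_1,s_2$. Everything else reduces to routine Littlewood--Paley summations, and no new idea beyond the standard anisotropic paraproduct machinery is needed.
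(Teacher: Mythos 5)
Your proposal is correct and follows essentially the same route as the paper: the double (isotropic $\times$ horizontal) Bony decomposition, Hölder with the low-frequency partial sums in $L^\infty$ for the two pure paraproduct/remainder pieces, the mixed-norm anisotropic Bernstein inequalities costing $2^{j'/2}$ and $2^{q'/2}$ (redistributed via the $\delta_i$) for the cross terms, and the $S_jg=S_j\Lambda(\Lambda^{-1}g)$ trick to handle the borderline index $s_1=-\tfrac12$. The only difference is presentational: the paper realizes the cross terms through the explicit $TR^h$ and $RT^h$ pieces rather than through your ``remainder'' bookkeeping, but the estimates are identical.
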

    \subsection{Proof of Lemma \ref{AnisoPL}}
    \begin{proof}[Proof of Lemma \ref{AnisoPL}]
    Recalling the definitions of the Littlewood-Paley blocks $\dot \Delta_j$ and $\dot\Delta_q^h$ in Section \ref{sec:mainr}, we introduce the isotropic and anisotropic paradifferential decomposition due to Bony in \cite{Bony}.
Let $f,g\in\cS'(\R^d)$, 
$$fg=T(f,g)+R(f,g) \andf fg=T^h(f,g)+R^h(f,g) $$
where 
$$T(f,g)= \sum_{j\in \mathbb{Z}}S_{j-1}f\ddj g \andf R(f,g)=\sum_{j\in \mathbb{Z}}\ddj f S_{j+2}g, $$
  and in the horizontal direction
  $$T^h(f,g)= \sum_{q\in \mathbb{Z}}S_{q-1}f\ddq^h g \andf R^h(f,g)=\sum_{q\in \mathbb{Z}}\ddq^h f S_{q+2}g. $$
Applying the double paraproduct decomposition to $fg$, one has
\begin{align}
    fg=TT^h(f,g)+TR^h(f,g)+RT^h(f,g)+RR^h(f,g).
\end{align}
Let us estimate each of these terms separately. For the first one, we have
\begin{align*}
   \|\ddj\ddq^h TT^1(f,g)\|_{L^2}&\lesssim \sum_{|j'-j|\leq 4, |q'-q|\leq 4} \|S_{j'-1}S_{q'-1}f\dot{\Delta}_{j'}\dot{\Delta}_{q'}\d_yg\|_{L^2}
   \\&\lesssim  \sum_{|j'-j|\leq 4, |q'-q|\leq 4} \|S_{j'-1}S_{q'-1}f\|_{L^\infty}\|\dot{\Delta}_{j'}\dot{\Delta}_{q'}\d_yg\|_{L^2}
   \\& \lesssim c_{j,q}2^{-js_1}2^{-qs_2}\|f\|_{L^\infty}\|g\|_{B^{s_1,s_2}}.
\end{align*}
Concerning the fourth term, one has
\begin{align*}
   \|\ddj\ddq^h RR^1(f,g)\|_{L^2}&\lesssim \sum_{|j'-j|\leq 4, |q'-q|\leq 4} \|\dot{\Delta}_{j'}\dot{\Delta}_{q'}fS_{j'+2}S_{q'+2}g\|_{L^2}
   \\&\lesssim \sum_{|j'-j|\leq 4, |q'-q|\leq 4} \|\dot{\Delta}_{j'}\dot{\Delta}_{q'}f\|_{L^2}\|S_{j'+2}S_{q'+2}g\|_{L^\infty}
   \\&\lesssim  c_{j,q}2^{-js_1}2^{-qs_2}\|f\|_{B^{s_1,s_2}}\|g\|_{L^\infty}.
\end{align*}

The following computations in the special case $s_1=-\frac12$ and $s_2=\frac12$ will be needed in the proof of Lemma \ref{AnisoPL}:
\begin{align*}
   \|\ddj\ddq^h RR^1(f,g)\|_{L^2}&\lesssim \sum_{|j'-j|\leq 4, |q'-q|\leq 4} \|\dot{\Delta}_{j'}\dot{\Delta}_{q'}fS_{j'+2}S_{q'+2}g\|_{L^2}
   \\&\lesssim \sum_{|j'-j|\leq 4, |q'-q|\leq 4} \|\dot{\Delta}_{j'}\dot{\Delta}_{q'}f\|_{2}\|S_{j'+2}S_{q'+2}g\|_{L^\infty}
    \\&\lesssim \sum_{|j'-j|\leq 4, |q'-q|\leq 4}2^{j}\|\dot{\Delta}_{j'}\dot{\Delta}_{q'}f\|_{L^2}2^{-j}\|S_{j'+2}S_{q'+2}g\|_{L^2}
   \\&\lesssim  c_{j,q}2^{\frac j 2}2^{-\frac q 2}\|f\|_{B^{\frac12,\frac12}}\|\Lambda^{-1} g\|_{L^\infty}.
\end{align*}
For the third term, using the anisotropic Bernstein inequality, one has
\begin{align*}
   \|\ddj\ddq^h& TR^1(f,g)\|_{L^2}\\&\lesssim \sum_{|j'-j|\leq 4, |q'-q|\leq 4} \|\dot{\Delta}_{j'}S_{q'+2}fS_{j'+2}\dot{\Delta}_{q'}g\|_{L^2}
    \\&\lesssim \sum_{|j'-j|\leq 4, |q'-q|\leq 4} \|\dot{\Delta}_{j'}S_{q'+2}f\|_{L^2_x L^\infty_y}\|S_{j'+2}\dot{\Delta}_{q'}g\|_{L^\infty_x L^2_y }
    \\&\lesssim \sum_{|j'-j|\leq 4, |q'-q|\leq 4} 2^{\frac{j'}{2}} \|\dot{\Delta}_{j'}S_{q'+2}f\|_{L^2}2^{\frac{q'}{2}}\|S_{j'+2}\dot{\Delta}_{q'}g\|_{L^2}
    \\&\lesssim 2^{-js_1}2^{-qs_2}\sum_{|j'-j|\leq 4, |q'-q|\leq 4}  2^{j'(s_1+\frac12)} \|\dot{\Delta}_{j'}S_{q'+2}f\|_{L^2}2^{q'(s_2+\frac12)}\|S_{j'+2}\dot{\Delta}_{q'}g\|_{L^2}
    \\&\lesssim 2^{-js_1}2^{-qs_2}\sum_{|j'-j|\leq 4, |q'-q|\leq 4}  2^{j'(s_1+\frac12+\delta_1)}2^{-q\delta_2} \|\dot{\Delta}_{j'}S_{q'+2}f\|_{L^2}2^{q'(s_2+\frac12+\delta_2)}2^{-j'\delta_1}\|S_{j'+2}\dot{\Delta}_{q'}g\|_{L^2}
      \\&\lesssim2^{-js_1}2^{-qs_2} c_{j,q}
\left\{
\begin{aligned}
      &\|f\|_{B^{s_1+\frac12+\delta_1,-\delta_2}}\|g\|_{B^{-\delta_1,s_2+\frac12+\delta_2}} \quad &\text{if } \delta_1,\delta_2>0
     \\  &\|f\|_{B^{s_1+\frac12}}\|g\|_{B^{s_2+\frac12}_h} \quad &\text{if } \delta_1,\delta_2=0,
\end{aligned}
\right.
\end{align*}
where $B_h^s$ refers to the Besov spaces with horizontal localisation $\ddq$. Since 
\begin{align*}
    \|f\|_{B^{s_1+\frac12}}\|g\|_{B^{s_2+\frac12}_h}\lesssim \|f\|_{B^{s_1+\frac12,0}}\|g\|_{B^{0,s_2+\frac12}},
\end{align*}
we obtain the desired estimate. Similar computations lead to
\begin{align*}
   \|\ddj\ddq^h& RT^1(f,g)\|_{L^2}
 \lesssim 2^{-js_1}2^{-qs_2} c_{j,q} \|f\|_{B^{-\delta_3,s_2+\frac12+\delta_4}}  \|g\|_{B^{s_1+\frac12+\delta_3,-\delta_4}}.
\end{align*}
Multiplying the above estimates by $2^{js_1}2^{qs_2}$ and summing on $j,q\in\Z$, the desired result follows.
\end{proof}

 \subsection{Proof of Lemma \ref{NLControl}}
 \begin{proof}[Proof of Lemma \ref{NLControl}]
 
 \textbf{I) Estimates for $h=(\cR_2 \Omega, - \cR_1 \Omega) \cdot \nabla b$.}
 \medbreak
 
\noindent \textbf{i)} Applying Lemma \ref{AnisoPL} with $f=\cR_1 \Omega$, $g=\d_yb$, $s_1=\frac 32$, $s_2=\frac12$, $\delta_1=\delta_2=\frac12$ and $\delta_3=\delta_4=0$, one has
\begin{align*}
   \|\cR_1\Omega\d_yb\|_{L^1_T(B^{\frac32,\frac12})}&\lesssim \|\cR_1\Omega\|_{L^1_T(L^\infty)}\|\d_yb\|_{L^\infty_T(B^{\frac32,\frac12})}+\|\cR_1\Omega\|_{L^1_T(B^{\frac32,\frac12})}\|\d_yb\|_{L^\infty_T(L^\infty)}\\&\quad+   \|\cR_1\Omega\|_{L^2_T(B^{\frac52,-\frac12})}\|\d_yb\|_{L^2_T(B^{-\frac12,\frac32})} +\|\cR_1\Omega\|_{L^1_T(B^{0,1})}\|\d_yb\|_{L^\infty_T(B^{2})}.
\end{align*}

Now, using Lemma \ref{BesovSobolev} and \ref{InclusBesov} we have
\begin{itemize}
    \item $\|\cR_1\Omega\|_{L^1_T(L^\infty)}\|\d_yb\|_{L^\infty_T(B^{\frac32,\frac12})} \lesssim X(t)\|b\|_{L^\infty({\dot H}^s\cap H^{3-\varepsilon})} \lesssim X(t)^2$,
    \bigbreak
    \item $\|\cR_1\Omega\|_{L^1_T(B^{\frac32,\frac12})}\|\d_yb\|_{L^\infty_T(L^\infty)} \lesssim Y(t)\|b\|_{L^\infty(H^{2+\varepsilon})} \lesssim Y(t)X(t)$,
    \bigbreak
    \item $\|\cR_1\Omega\|_{L^2_T(B^{\frac52,-\frac12})}\|\d_yb\|_{L^2_T(B^{-\frac12,\frac32})}\lesssim \|\Omega\|_{L^2_T(B^{\frac32,\frac12})}\|b\|_{L^2_T(B^{\frac12,\frac32})} \lesssim Y(t)^2$,
    \bigbreak
    \item $\|\cR_1\Omega\|_{L^1_T(B^{0,1})}\|\d_yb\|_{L^\infty_T(B^{2})}\lesssim \|\Omega\|_{L^1_T(B^{-1,2})}\|b\|_{L^\infty_T(B^{3})}\lesssim \|\Omega\|_{L^1_T(B^{-\frac12,\frac32})}\|b\|_{L^\infty_T(B^{3})} \lesssim X(t)^2$.
\end{itemize}
Gathering the above estimates, one obtains
\begin{align*}
   \|\ddj\ddq^h \cR_1\Omega\d_yb\|_{L^1_T(B^{\frac32,\frac12)}}\lesssim X(t)^2.
\end{align*}

\noindent \textbf{ii)} Applying Lemma \ref{AnisoPL} with $f=\cR_1 \Omega$, $g=\d_yb$, $s_1=\frac 12$, $s_2=\frac12$, $\delta_1=\delta_2=\frac12$ and $\delta_3=\delta_4=0$, we get
\begin{align*}
   \|\cR_1\Omega\d_yb\|_{L^1_T(B^{\frac12,\frac12)}}&\lesssim \|\cR_1\Omega\|_{L^1_T(L^\infty)}\|\d_y b\|_{L^\infty_T(B^{\frac12,\frac12})}+\|\cR_1\Omega\|_{L^1_T(B^{\frac12,\frac12})}\|\d_yb\|_{L^\infty_T(L^\infty)}\\&\quad +   \|\cR_1\Omega\|_{L^2_T(B^{\frac32,-\frac12})}\|\d_yb\|_{L^2_T(B^{-\frac12,\frac32})} +\|\cR_1\Omega\|_{L^1_T(B^{0,1})}\|\d_yb\|_{L^\infty_T(B^{1})}.
\end{align*}

Using Lemma \ref{BesovSobolev} and \ref{InclusBesov}, we have
\begin{itemize}
    \item $\|\cR_1\Omega\|_{L^1_T(L^\infty)}\|\d_yb\|_{L^\infty_T(B^{\frac12,\frac12})} \lesssim Y(t)\|b\|_{L^\infty_T({\dot H}^s\cap H^{2-\tau})} \lesssim X(t)^2$,
    \bigbreak
    \item $\|\cR_1\Omega\|_{L^1_T(B^{\frac12,\frac12})}\|\d_yb\|_{L^\infty_T(L^\infty)} \lesssim Y(t)\|b\|_{L^\infty(H^{2+\tau})} \lesssim Y(t)X(t)$,
    \bigbreak
    \item $\|\cR_1\Omega\|_{L^2_T(B^{\frac32,-\frac12})}\|\d_yb\|_{L^2_T(B^{-\frac12,\frac32})}\lesssim \|\Omega\|_{L^2_T(B^{\frac12,\frac12})}\|b\|_{L^2_T(B^{\frac12,\frac32})} \lesssim Y(t)^2$,
    \bigbreak
    \item $\|\cR_1\Omega\|_{L^1_T(B^{0,1})}\|\d_yb\|_{L^\infty_T(B^{1})}\lesssim \|\Omega\|_{L^1_T(B^{-1,2})}\|b\|_{L^\infty_T(B^{2})}\lesssim \|\Omega\|_{L^1_T(B^{-\frac12,\frac32})}\|b\|_{L^\infty_T(B^{2})} \lesssim X(t)^2$.
\end{itemize}

\noindent \textbf{iii)} For the second addend of $h$, applying Lemma \ref{AnisoPL} with $f=\cR_2 \Omega$, $g=\d_xb$, $s_1=\frac 32$, $s_2=\frac12$, $\delta_1=\delta_2=\frac12$ and $\delta_3=\delta_4=0$, one obtains
\begin{align*}
   \|\cR_2\Omega\d_yb\|_{L^1_T(B^{\frac32,\frac12)}}&\lesssim \|\cR_2\Omega\|_{L^2_T(L^\infty)}\|\d_xb\|_{L^2_T(B^{\frac32,\frac12})}+\|\cR_2\Omega\|_{L^2_T(B^{\frac32,\frac12})}\|\d_xb\|_{L^2_T(L^\infty)}\\&\quad +   \|\cR_2\Omega\|_{L^\infty_T(B^{\frac52,-\frac12})}\|\d_xb\|_{L^1_T(B^{-\frac12,\frac32})} +\|\cR_2\Omega\|_{L^2_T(B^{0,1})}\|\d_xb\|_{L^2_T(B^{2})}.
\end{align*}

Let us deal with each r.h.s. term:
\begin{itemize}
\item $\|\cR_2\Omega\|_{L^2_T(L^\infty)}\|\d_xb\|_{L^2_T(B^{\frac32,\frac12})}\lesssim \|\cR_2\Omega\|_{L^2_T(B^{\frac12,\frac12)}}\|\cR_1b\|_{L^2_T({\dot H}^s\cap H^{3-\tau)}} \lesssim  Y(t)X(t),$
\bigbreak
\item $\|\cR_2\Omega\|_{L^2_T(B^{\frac32,\frac12})}\|\d_xb\|_{L^2_T(L^\infty)}\lesssim Y(t)X(t),$
\bigbreak
\item $\|\cR_2\Omega\|_{L^\infty_T(B^{\frac52,-\frac12})}\|\d_xb\|_{L^1_T(B^{-\frac12,\frac32})}\lesssim \|\cR_2\Omega\|_{L^\infty_T(B^{\frac52,-\frac12})}\|b\|_{L^1_T(B^{-\frac12,\frac52})}\lesssim X(t)Y(t),$
\bigbreak
\item $\|\cR_2\Omega\|_{L^2_T(B^{0,1})}\|\d_xb\|_{L^2_T(B^{2})}\lesssim \|\cR_2\Omega\|_{L^2_T(B^{\frac12,\frac12})}\|\d_xb\|_{L^2_T(B^{2})}\lesssim \|\Omega\|_{L^2_T(B^{\frac12,\frac12})}\|\cR_1b\|_{L^2_T(B^{3})}\lesssim Y(t)X(t)$.
\end{itemize}

\noindent \textbf{iv)} Applying Lemma \ref{AnisoPL} with $f=\cR_2 \Omega$, $g=\d_xb$, $s_1=\frac 12$, $s_2=\frac12$, $\delta_1=\delta_2=0$ and $\delta_3=\delta_4=0$, 
\begin{align*}
   \|\cR_2\Omega\d_xb\|_{L^1_T(B^{\frac12,\frac12)}}&\lesssim \|\cR_2\Omega\|_{L^2_T(L^\infty)}\|\d_xb\|_{L^2_T(B^{\frac12,\frac12})}+\|\cR_2\Omega\|_{L^2_T(B^{\frac12,\frac12})}\|\d_xb\|_{L^2_T(L^\infty)}\\&\quad+   \|\cR_2\Omega\|_{L^2_T(B^{1,0})}\|\d_xb\|_{L^2_T(B^{0,1})} +\|\cR_2\Omega\|_{L^2_T(B^{0,1})}\|\d_xb\|_{L^2_T(B^{1})}.
\end{align*}
Again, we deal with each term separately:
\begin{itemize}
    \item $\|\cR_2\Omega\|_{L^2_T(L^\infty)}\|\d_xb\|_{L^2_T(B^{\frac12,\frac12})}\lesssim \|\Omega\|_{L^2_T(B^{\frac12,\frac12})}\|b\|_{L^2_T(B^{\frac12,\frac32})}\leq Y(t)^2$,
    \bigbreak
    \item $\|\cR_2\Omega\|_{L^2_T(B^{\frac12,\frac12})}\|\d_xb\|_{L^2_T(L^\infty)} \lesssim \|\Omega\|_{L^2_T(B^{\frac12,\frac12})}\|b\|_{L^2_T(B^{\frac12,\frac32})}\lesssim Y(t)^2$,
    \bigbreak
    \item $\|\cR_2\Omega\|_{L^2_T(B^{1,0})}\|\d_xb\|_{L^2_T(B^{0,1})}\lesssim \|\Omega\|_{L^2_T(B^{1})}\|\cR_1b\|_{L^2_T(B^{2})}\lesssim X(t)^2$,
    \bigbreak
    \item $\|\cR_2\Omega\|_{L^2_T(B^{0,1})}\|\d_xb\|_{L^2_T(B^{1})}\lesssim \|\Omega\|_{L^2_T(B^{1})}\|\cR_1b\|_{L^2_T(B^{2})}\lesssim X(t)^2$.
\end{itemize}
Gathering the above estimates yields
\begin{align*}
   \|\ddj\ddq^h \cR_1\Omega\d_yb\|_{L^1_T(B^{\frac12,\frac12})}&\lesssim X(t)^2.
\end{align*}

Adding the estimates from $\bold{i)-iv)}$ and using that $Y(t)\lesssim X(t)$ thanks to Lemma \ref{BesovSobolev}, one obtains
\begin{align}\label{est:h}
   \|h\|_{L^1_T(B^{\frac32,\frac12}\cap B^{\frac12,\frac12})}&\lesssim X(t)^2.
\end{align}

 \textbf{II) Estimates for $g=\varepsilon \cR_1 (\cR_2 \Omega, -\cR_1\Omega) \cdot \nabla b + \Lambda^{-1} ((\cR_2 \Omega, -\cR_1\Omega) \cdot (\nabla \Lambda \Omega)). $}
 
\medbreak
\noindent The estimates of the first term $\varepsilon \cR_1 (\cR_2 \Omega, -\cR_1\Omega) \cdot \nabla b$ can be obtained in the same way as the previous terms.
Indeed, notice that by Lemma \ref{InclusBesov} and thanks to the boundedness of the Riesz transform $\cR_1: B^{1,0}=B_{2,1}^1 \rightarrow B^{1,0}=B_{2,1}^1$, one has
\begin{align*}
    \|\cR_1 (\cR_2 \Omega \d_x b)\|_{L_T^1(B^{\frac 12, \frac 12})} &\lesssim \|\cR_1 (\cR_2 \Omega \d_x b)\|_{L_T^1(B^{1, 0})}\lesssim \|\cR_2 \Omega \d_x b\|_{L_T^1(B^{1, 0})},
\end{align*}
which is exactly the same term as the one we treated in $\bold{I})$. A similar argument can be applied for the bound in $B^{\frac32,\frac12}$.
We then turn to the second addend of $g$.\\
\textbf{i)} First, observing that
\begin{align*}
    \|\Lambda^{-1} ((\cR_2 \Omega, -\cR_1\Omega) \cdot (\nabla \Lambda \Omega))\|_{B^{\frac32,\frac12}}\leq   \|(\cR_2 \Omega, -\cR_1\Omega) \cdot (\nabla \Lambda \Omega)\|_{B^{\frac12,\frac12}}
\end{align*}
and since $\Omega$ has better decay properties than $b$ and we control $\Omega$ in $L^2_T(H^s)$ for $s> 3$, we can directly deduce from our previous computations that 
\begin{align*}
    \|\Lambda^{-1} ((\cR_2 \Omega, -\cR_1\Omega) \cdot (\nabla \Lambda \Omega))\|_{B^{\frac32,\frac12}}\leq   X(t)^2.
\end{align*}

\textbf{ii)} For the second regularity setting it is a bit trickier as one has \begin{align*}
    \|\Lambda^{-1} ((\cR_2 \Omega, -\cR_1\Omega) \cdot (\nabla \Lambda \Omega))\|_{B^{\frac12,\frac12}}&\lesssim   \|(\cR_2 \Omega, -\cR_1\Omega) \cdot (\nabla \Lambda \Omega)\|_{B^{-\frac 12,\frac12}}.
\end{align*}
Applying the second inequality of Lemma \ref{AnisoPL} with $f=\cR_2 \Omega$, $g=\d_x\La \Omega$, $s_1=-\frac 12$, $s_2=\frac12$, $\delta_1=1$, $\delta_2=0$ and $\delta_3=\delta_4=0$, we obtain
\begin{align*}
   \|\cR_2\Omega\d_x\La \Omega\|_{L^1_T(B^{-\frac12,\frac12})}&\lesssim \|\cR_2\Omega\|_{L^\infty_T(L^\infty)}\|\d_x\La \Omega\|_{L^1_T(B^{-\frac12,\frac12})}+\|\cR_2\Omega\|_{L^\infty_T(B^{\frac12,\frac12})}\|\Lambda^{-1} \d_x\La \Omega\|_{L^1_T(L^\infty)}\\&\quad +   \|\cR_2\Omega\|_{L^2_T(B^{1})}\|\d_x\La \Omega\|_{L^2_T(B^{-1,1})} +\|\cR_2\Omega\|_{L^2_T(B^{0,1})}\|\d_x\La \Omega\|_{L^2_T(B^{0,0})}.
\end{align*}

\begin{Rmq}
Alternatively, one could think to exploit the first inequality of Lemma \ref{AnisoPL}, which would require a control of $\|\cR_2\Omega\|_{L^2_T(B^{-\frac12,\frac12})}\|\d_x\La \Omega\|_{L^2_T(L^\infty)}$. However, such bounds hold under additional low-regularity assumptions on the initial data, for instance $B^{0,0}$, that we want to avoid here.
\end{Rmq}
We estimate the above terms as follows.
\begin{itemize}
    \item $\|\cR_2\Omega\|_{L^\infty_T(L^\infty)}\|\d_x\La \Omega\|_{L^1_T(B^{-\frac12,\frac12})}\lesssim \|\Omega\|_{L^\infty_T(B^{\frac12,\frac12})}\|\Omega\|_{L^1_T(B^{\frac12,\frac32})}\lesssim Y(t)^2$,
    \bigbreak
    \item $\|\cR_2\Omega\|_{L^\infty_T(B^{\frac12,\frac12})}\|\Lambda^{-1} \d_x\La \Omega\|_{L^1_T(L^\infty)} \lesssim \|\Omega\|_{L^\infty_T(B^{\frac12,\frac12})}\|\Omega\|_{L^1_T(B^{\frac12,\frac32})}\lesssim X(t)Y(t)
    $,
    \bigbreak
    \item $\|\cR_2\Omega\|_{L^2_T(B^{1})}\|\d_x\La \Omega\|_{L^2_T(B^{-1,1})}\lesssim \|\Omega\|_{L^2_T(B^{1})}\|\Omega\|_{L^2_T(B^{0,2})}\lesssim \|\Omega\|_{L^2_T(B^{1})}\|\Omega\|_{L^2_T(B^{2,0})}\lesssim X(t)^2$,
    \bigbreak
    \item $\|\cR_2\Omega\|_{L^2_T(B^{0,1})}\|\d_x\La \Omega\|_{L^2_T(B^{0,0})}\lesssim \|\Omega\|_{L^2_T(B^{1})}\|\Omega\|_{L^2_T(B^{1,1})}\lesssim \|\Omega\|_{L^2_T(B^{1})}\|\Omega\|_{L^2_T(B^{\frac32,\frac12})}\lesssim X(t)Y(t)$.
\end{itemize}
The estimates for the last term $\cR_1\Omega\d_y\Lambda \Omega$ follow the exact same lines, then we omit them. 
We have therefore
\begin{align}\label{est:g}
   \|g\|_{L^1_T(B^{\frac32,\frac12}\cap B^{\frac12,\frac12})}&\lesssim X(t)^2.
\end{align}

Adding \eqref{est:h} and \eqref{est:g} concludes the proof of Lemma \ref{NLControl}.

 \end{proof}
    \section{Toolbox}
 
 
  We collect some technical lemmas that are used in the course of the proof the results of this article. In some case, we provide (short) proofs, while in other cases, appealing to the existing literature to which we refer explicitly, we omit the proofs. When no explicit reference is provided being the results classical, the reader can look for instance at \cite{grafakos-book}. 
  
  \begin{Lemme}\label{SimpliCarre}
Let $X : [0,T]\to \mathbb{R}_+$ be a continuous function such that $X^2$ is differentiable. Assume that there exists 
 a constant $B\geq 0$ and  a measurable function $A : [0,T]\to \mathbb{R}_+$ 
such that 
 $$\frac{1}{2}\frac{d}{dt}X^2+BX^2\leq AX\quad\hbox{a.e.  on }\ [0,T].$$ 
 Then, for all $t\in[0,T],$ we have
$$X(t)+B\int_0^tX\leq X_0+\int_0^tA.$$
\end{Lemme}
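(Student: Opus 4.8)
The plan is to use the standard regularization trick: replace $X$ by the smoothed quantity $X_\delta:=\sqrt{X^2+\delta^2}$ for $\delta>0$, derive a differential inequality for $X_\delta$, integrate it, and then let $\delta\to0^+$. The virtue of $X_\delta$ is threefold: it is differentiable (because $X^2$ is, and $t\mapsto\sqrt{t}$ is smooth on $(0,\infty)$), it is bounded below away from zero ($X_\delta\geq\delta$), and it sandwiches $X$ in a controlled way, namely $0\leq X\leq X_\delta\leq X+\delta$ (the last inequality from $\sqrt{X^2+\delta^2}\leq\sqrt{X^2}+\sqrt{\delta^2}$).

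First I would compute $X_\delta'=\dfrac{(X^2)'}{2X_\delta}$, so that $X_\delta X_\delta'=\tfrac12(X^2)'$ and the hypothesis rewrites as $X_\delta X_\delta'+BX^2\leq AX$ almost everywhere on $[0,T]$. Dividing by $X_\delta>0$ and using $X/X_\delta\leq 1$ together with $A\geq0$ gives $X_\delta'\leq A-B\,\dfrac{X^2}{X_\delta}$ a.e. The only genuinely computational point is the elementary estimate $X-\dfrac{X^2}{X_\delta}=\dfrac{X}{X_\delta}\,(X_\delta-X)\leq X_\delta-X\leq\delta$, i.e. $\dfrac{X^2}{X_\delta}\geq X-\delta$, which turns the bound into $X_\delta'\leq A-BX+B\delta$ a.e.

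Then I would integrate this inequality on $[0,t]$. Since $X$ is continuous (hence integrable) on $[0,T]$ and, in all the applications of this lemma, $X^2$ is a genuine energy of class $C^1$ with $A\in L^1_{\mathrm{loc}}$, the fundamental theorem of calculus applies to $X_\delta$ and yields $X_\delta(t)-X_\delta(0)\leq\int_0^t A-B\int_0^t X+B\delta t$. Using $X(t)\leq X_\delta(t)$ and $X_\delta(0)\leq X_0+\delta$ and rearranging gives $X(t)+B\int_0^t X\leq X_0+\int_0^t A+\delta(1+Bt)$, and letting $\delta\to0^+$ concludes. The only (minor) obstacle is the regularity bookkeeping around the almost-everywhere differential inequality and the legitimacy of integrating it; once the hypotheses are read so that $X_\delta$ is absolutely continuous, the whole argument is completely elementary, and the case $B=0$ is immediate.
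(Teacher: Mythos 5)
The paper states Lemma \ref{SimpliCarre} in its toolbox appendix without proof, citing it as classical; your argument is precisely the canonical one (the $\sqrt{X^2+\delta^2}$ regularization used, e.g., in Bahouri--Chemin--Danchin) and it is correct. All the algebra checks out: $X_\delta X_\delta'=\tfrac12(X^2)'$, the bounds $X/X_\delta\le 1$ and $X^2/X_\delta\ge X-\delta$, and the passage $\delta\to0^+$ after integration. The only point you rightly flag is that integrating the a.e.\ differential inequality requires $X_\delta$ to be absolutely continuous (mere everywhere-differentiability of $X^2$ is not quite enough in full generality); in every application in the paper $X^2$ is the square of an $L^2$ norm of a Littlewood--Paley block of a smooth solution, hence locally absolutely continuous with integrable derivative, so your reading of the hypotheses is the intended one and the proof is complete.
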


\begin{Lemme}[Product estimates, \cite{WanBoussinesq}, Lemma 2.1]
\label{lem:product}
Let $s>0, 1 \le p,r \le \infty$, then
\begin{align*}
    \|\La^s(fg)\|_{L^p(\R^2)} \lesssim \| f\|_{L^{p_1}(\R^2)}\|\La^{s}g\|_{L^{p_2}(\R^2)}+\|g\|_{L^{r_1}(\R^2)}\|\La^s f\|_{L^{r_2}(\R^2)},
\end{align*}
where $1 \le p_1, r_1 \le \infty$ such that $\frac 1 p=\frac{1}{p_1}+\frac{1}{p_2}=\frac{1}{r_1}+\frac{1}{r_2}$.
\end{Lemme}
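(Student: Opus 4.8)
The plan is to prove this through the Littlewood--Paley/Bony paraproduct decomposition \cite{Bony}, reducing the bilinear estimate to three pieces, each handled by a vector-valued maximal inequality; this is the classical route to the Kato--Ponce (fractional Leibniz) inequality, so I only sketch it, and for the endpoint refinements I would invoke \cite{Li,Dancona,grafakos-book}. With the Littlewood--Paley notation of Section~\ref{sec:mainr}, write $fg=T_f g+T_g f+R(f,g)$, where $T_f g=\sum_j S_{j-1}f\,\ddj g$ is the low--high paraproduct, $T_g f$ the high--low one, and $R(f,g)=\sum_{|j-j'|\le1}\ddj f\,\dot{\Delta}_{j'}g$ the remainder. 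For $1<p<\infty$ I will use the square-function characterisations $\|h\|_{L^p}\approx\bigl\|(\sum_j|\ddj h|^2)^{1/2}\bigr\|_{L^p}$ and $\|\La^s h\|_{L^p}\approx\bigl\|(\sum_j2^{2js}|\ddj h|^2)^{1/2}\bigr\|_{L^p}$, and for $p_1,r_1\in(1,\infty]$ the pointwise bound $\sup_j|S_{j-1}h|\le Mh$ together with the Hardy--Littlewood inequality $\|Mh\|_{L^{p_1}}\lesssim\|h\|_{L^{p_1}}$.

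First I would treat the low--high term. Its $j$-th summand is frequency-localised in an annulus $\{|\xi|\sim2^j\}$, so $\ddk\La^s$ acts on it as a Mikhlin multiplier of norm $\lesssim2^{ks}$ supported near $|\xi|\sim2^k$; reorganising, $\|\La^s T_f g\|_{L^p}\lesssim\bigl\|(\sum_j2^{2js}|S_{j-1}f|^2|\ddj g|^2)^{1/2}\bigr\|_{L^p}\le\bigl\|(\sup_j|S_{j-1}f|)\,(\sum_j2^{2js}|\ddj g|^2)^{1/2}\bigr\|_{L^p}$, and Hölder with exponents $(p_1,p_2)$ plus the maximal inequality gives $\lesssim\|f\|_{L^{p_1}}\|\La^s g\|_{L^{p_2}}$. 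By symmetry $\|\La^s T_g f\|_{L^p}\lesssim\|g\|_{L^{r_1}}\|\La^s f\|_{L^{r_2}}$. Next, for the remainder the $j$-th summand is only localised in a ball $\{|\xi|\lesssim2^j\}$, so I project onto $\ddk$: the piece $\ddk\La^s(\ddj f\,\dot{\Delta}_{j'}g)$ vanishes unless $j\gtrsim k$, and on $\{|\xi|\sim2^k\}$ the multiplier $|\xi|^s\varphi(2^{-k}\xi)$ is again Mikhlin of norm $\sim2^{ks}$. Summing the geometric series $\sum_{j\ge k-c}2^{(k-j)s}$ — convergent precisely because $s>0$ — and using Minkowski's inequality in $\ell^2$ yields $\|\La^s R(f,g)\|_{L^p}\lesssim\bigl\|(\sum_j2^{2js}|\ddj f|^2|\ddj g|^2)^{1/2}\bigr\|_{L^p}$, which by Hölder and the maximal inequality is bounded by $\|\La^s f\|_{L^{r_2}}\|g\|_{L^{r_1}}$ (or, distributing $\La^s$ onto $g$, by $\|f\|_{L^{p_1}}\|\La^s g\|_{L^{p_2}}$); either way it is absorbed into the right-hand side. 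Adding the three contributions concludes the case $1<p<\infty$.

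I expect the genuine difficulties to be twofold. First, the endpoints $p\in\{1,\infty\}$ (and $p_1$ or $r_1$ equal to $1$) permitted by the statement lie outside the $L^p$ square-function/maximal-function machinery and require the Hardy-space/$\mathrm{BMO}$ substitutes of \cite{Li,Dancona}; since every application in this paper uses $p\in(1,\infty)$ with $p_1,r_1\in(1,\infty]$, one may simply cite those references for the full stated range. Second, one must check carefully that commuting $\La^s$ past the Littlewood--Paley projections produces only Mikhlin multipliers of size $2^{ks}$: this is exactly where the hypothesis $s>0$ enters — it removes the origin singularity of $|\xi|^s$ (from which $\varphi$ stays away) and makes the remainder's geometric sum converge — and for $s$ not an even integer the derivatives of $|\xi|^s$ blow up at the origin, so keeping the annular localisation throughout is essential rather than cosmetic.
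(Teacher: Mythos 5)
The paper does not prove this lemma at all: it is quoted from \cite{WanBoussinesq} (Lemma 2.1) and is the classical Kato--Ponce/Kenig--Ponce--Vega fractional Leibniz rule; the Toolbox section explicitly omits proofs of results taken from the literature. Your paraproduct sketch is the standard modern proof of that result and is sound in outline: the low--high and high--low terms are handled exactly as you describe, and for the remainder the three key points --- frequency support in a ball forcing $j\gtrsim k$, the $2^{ks}$ Mikhlin bound for $|\xi|^s\varphi(2^{-k}\xi)$ on the annulus, and convergence of $\sum_{j\ge k-c}2^{(k-j)s}$ precisely because $s>0$ --- are the right ones. The steps you compress (passing from $\sum_j\La^s(S_{j-1}f\,\ddj g)$ to a square function, and pulling $\sup_j|S_{j-1}f|$ or $\sup_j|\ddj g|$ out of it before applying H\"older) require the Fefferman--Stein \emph{vector-valued} maximal inequality rather than only the scalar Hardy--Littlewood bound, but that is routine and standard. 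Your caveat about the endpoints is also accurate and worth keeping: as stated the lemma allows $p\in\{1,\infty\}$, which the square-function machinery does not reach and which genuinely requires the harder endpoint theory (Hardy-space and $L^\infty$ refinements, cf.\ \cite{Li,Dancona,grafakos-book}); however, every invocation of Lemma \ref{lem:product} in this paper has $p=2$ with one factor measured in $L^\infty$ and the other in $L^2$, so the range your argument actually establishes covers all of its uses here.
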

\begin{Lemme}[Commutator estimates, \cite{WanBoussinesq}, Lemma 2.1]\label{lem:comm}
Let $s>0$, $1 \le p_1, r_1 \le \infty$ and $1<p, p_1, r_1 <1$ such that $\frac 1p=\frac{1}{p_1} + \frac{1}{p_2} = \frac{1}{r_1} + \frac{1}{r_2}$. Then
\begin{align*}
\|[\La^s, f]g\|_{L^p} \lesssim (\|\nabla f\|_{L^{p_1}}\|\La^{s-1}g\|_{L^{p_2}}+\|\La^s f\|_{L^{r_1}}\|g\|_{L^{r_2}}).
\end{align*}
\end{Lemme}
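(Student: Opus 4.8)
The plan is to establish this Kato–Ponce-type commutator estimate via Bony's paraproduct decomposition, isolating the single term in which a derivative must actually be gained. Writing $[\La^s,f]g=\La^s(fg)-f\La^s g$ and applying the homogeneous decomposition $fg=T(f,g)+T(g,f)+R(f,g)$ introduced in the proof of Lemma \ref{AnisoPL} — together with the analogous splitting $f\La^sg=T(f,\La^sg)+T(\La^sg,f)+R(f,\La^sg)$ — one obtains
\begin{align*}
[\La^s,f]g=\big(\La^sT(f,g)-T(f,\La^sg)\big)+\big(\La^sT(g,f)-T(\La^sg,f)\big)+\big(\La^sR(f,g)-R(f,\La^sg)\big).
\end{align*}
The first bracket is the genuine paraproduct commutator $[\La^s,T_f]g=\sum_j[\La^s,S_{j-1}f]\ddj g$ and is the only term that carries a gain of one derivative; it will produce the first summand $\|\nabla f\|_{L^{p_1}}\|\La^{s-1}g\|_{L^{p_2}}$. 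The remaining two brackets either put all $s$ derivatives on $f$ or keep $g$ undifferentiated, and will produce the second summand $\|\La^sf\|_{L^{r_1}}\|g\|_{L^{r_2}}$.

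For the main bracket I would work at the level of dyadic blocks. Since $S_{j-1}f\,\ddj g$ and $\ddj g$ are both frequency-localized in an annulus of size $2^j$, the operator $\La^s$ acts on them as convolution with a kernel $\phi_j(z)=2^{j(d+s)}\Phi(2^jz)$ for some fixed $\Phi\in\mathcal S(\R^d)$. A direct computation then gives the pointwise identity
\begin{align*}
[\La^s,S_{j-1}f]\ddj g(x)=\int \phi_j(x-y)\big(S_{j-1}f(y)-S_{j-1}f(x)\big)\,\ddj g(y)\,dy,
\end{align*}
and a first-order Taylor expansion $S_{j-1}f(y)-S_{j-1}f(x)=(y-x)\cdot\int_0^1\nabla S_{j-1}f(x+t(y-x))\,dt$ converts the effective kernel into $|z|\,|\phi_j(z)|=2^{j(s-1)}\Psi(2^jz)$ with $\Psi(w)=|w||\Phi(w)|\in L^1$. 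Thus, up to the factor $2^{j(s-1)}$, the commutator is controlled by an approximate-identity average of $\nabla f$ against $\ddj g$, and Young/Hölder yield $\big\|[\La^s,S_{j-1}f]\ddj g\big\|_{L^p}\lesssim 2^{j(s-1)}\|\nabla f\|_{L^{p_1}}\|\ddj g\|_{L^{p_2}}$. Summing in $j$ and recognizing $\big(\sum_j 2^{2j(s-1)}|\ddj g|^2\big)^{1/2}\simeq|\La^{s-1}g|$ through the Littlewood–Paley square-function characterization of $L^{p_2}$ (valid for $1<p_2<\infty$) produces $\|\nabla f\|_{L^{p_1}}\|\La^{s-1}g\|_{L^{p_2}}$.

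For the two remaining brackets I would exploit no cancellation: each summand $\La^sT(g,f)=\sum_j\La^s(S_{j-1}g\,\ddj f)$ is spectrally localized at $2^j$, so $\La^s$ costs $2^{js}$ and lands on $\ddj f$; together with $T(\La^sg,f)$, $\La^sR(f,g)$ and $R(f,\La^sg)$ these are handled exactly as in the proof of the product law Lemma \ref{lem:product}, via Bernstein's inequality, Hölder, and the square-function bound, giving $\|\La^sf\|_{L^{r_1}}\|g\|_{L^{r_2}}$. Adding the three contributions and distributing the Hölder exponents over the two paraproduct slots yields the claimed inequality. The hard part will be precisely the dyadic commutator estimate for $[\La^s,S_{j-1}f]\ddj g$: extracting the clean gain of exactly one derivative (the factor $2^{j(s-1)}$ and the replacement of $f$ by $\nabla f$) requires the kernel/Taylor argument above, and the subsequent passage from the dyadic bounds to Lebesgue norms through the square function forces the open-endpoint hypotheses $1<p,p_1,r_1<\infty$ recorded in the statement; every other term is a routine paraproduct estimate once this mechanism is in place.
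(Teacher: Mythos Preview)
The paper does not prove this lemma at all: it is stated in the Toolbox appendix with an explicit citation to \cite{WanBoussinesq}, Lemma 2.1, and no argument is given. This is the classical Kato--Ponce commutator estimate, and the paper simply imports it as a black box.

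Your paraproduct approach is the standard one and is correct in outline. A couple of points to tighten: after obtaining the dyadic bound $\|[\La^s,S_{j-1}f]\ddj g\|_{L^p}\lesssim 2^{j(s-1)}\|\nabla f\|_{L^{p_1}}\|\ddj g\|_{L^{p_2}}$, summing in $j$ in $L^p$ does not follow from the scalar triangle inequality alone --- you need the almost-orthogonality coming from the fact that each commutator piece is spectrally supported in an annulus $\sim 2^j$, and then a vector-valued (Fefferman--Stein) maximal inequality to pass from the pointwise bound involving an averaged $|\nabla f|$ to the product $\|\nabla f\|_{L^{p_1}}\|\La^{s-1}g\|_{L^{p_2}}$. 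Also, in the ``remaining brackets'' the term $T(\La^s g,f)$ is not spectrally localized at $2^j$ on $\ddj f$ but rather on the low-frequency factor $S_{j-1}(\La^s g)$; treating it separately from $\La^s T(g,f)$ requires either combining the two to exhibit another commutator structure or, more simply, bounding $T(\La^s g,f)$ by $\|g\|_{L^{r_2}}\|\La^s f\|_{L^{r_1}}$ via the dual paraproduct estimate (using $\|S_{j-1}\La^s g\|_{L^{r_2}}\lesssim 2^{js}\|g\|_{L^{r_2}}$ and almost orthogonality). These are routine refinements; the architecture of your proof is the right one.
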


\begin{Lemme}[Interpolation]
\label{lem:int}
Let $s_0 \le s \le s_1$. Then, for $\theta \in (0,1)$, such that $s=\theta s_0+(1-\theta)s_1$, it holds
\begin{align*}
    \|f\|_{\dot H^s} \lesssim \|f\|_{\dot H^{s_0}}^\theta  \|f\|_{\dot H^{s_1}}^{1-\theta}.
\end{align*}

\end{Lemme}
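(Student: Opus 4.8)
The plan is to prove the homogeneous interpolation inequality by a direct Fourier-space/Plancherel argument combined with Hölder's inequality. First I would write the $\dot H^s$ norm via the Fourier transform: by Plancherel,
\[
\|f\|_{\dot H^s}^2 = \int_{\R^2} |\xi|^{2s} |\widehat f(\xi)|^2\, d\xi.
\]
Using $s = \theta s_0 + (1-\theta) s_1$, I split the weight as
\[
|\xi|^{2s} = \bigl(|\xi|^{2s_0}\bigr)^{\theta} \bigl(|\xi|^{2s_1}\bigr)^{1-\theta},
\]
and similarly factor $|\widehat f(\xi)|^2 = (|\widehat f(\xi)|^2)^{\theta} (|\widehat f(\xi)|^2)^{1-\theta}$. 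Then the integrand becomes a product of two factors raised to the powers $\theta$ and $1-\theta$, so I apply Hölder's inequality with conjugate exponents $1/\theta$ and $1/(1-\theta)$.

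Carrying this out, Hölder gives
\[
\int_{\R^2} |\xi|^{2s}|\widehat f(\xi)|^2\, d\xi
\le \left(\int_{\R^2} |\xi|^{2s_0} |\widehat f(\xi)|^2\, d\xi\right)^{\theta}
\left(\int_{\R^2} |\xi|^{2s_1} |\widehat f(\xi)|^2\, d\xi\right)^{1-\theta}
= \|f\|_{\dot H^{s_0}}^{2\theta}\, \|f\|_{\dot H^{s_1}}^{2(1-\theta)}.
\]
Taking square roots yields the claimed bound $\|f\|_{\dot H^s} \lesssim \|f\|_{\dot H^{s_0}}^\theta \|f\|_{\dot H^{s_1}}^{1-\theta}$ (in fact with constant $1$). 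The hypothesis $s_0 \le s \le s_1$ guarantees $\theta \in [0,1]$, so the Hölder exponents are legitimate; the endpoint cases $\theta \in \{0,1\}$ are trivial.

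There is essentially no obstacle here — the only point requiring a word of care is that these are \emph{homogeneous} Sobolev spaces, so one should either work with $f$ in the appropriate space $\dot H^{s_0} \cap \dot H^{s_1}$ (which forces $\widehat f \in L^1_{loc}$ away from the origin and makes all three integrals finite and meaningful), or equivalently phrase the argument with the Littlewood-Paley characterization $\|f\|_{\dot H^\sigma}^2 \sim \sum_j 2^{2j\sigma}\|\ddj f\|_{L^2}^2$ and apply discrete Hölder in $j$. Either route is routine; I would present the Plancherel version as it is the shortest.
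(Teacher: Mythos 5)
Your proof is correct and is the standard Plancherel--Hölder argument; the paper itself omits the proof of this lemma, referring to it as classical (citing Grafakos' book), and your argument is exactly the canonical one, even yielding the constant $1$. The closing remark about working in $\dot H^{s_0}\cap\dot H^{s_1}$ is the right level of care for the homogeneous setting.
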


\begin{Lemme}[Embedding in Besov spaces] \label{InclusBesov}
For $s>0$, $s_1,s_2\in\R$, one has \begin{align}
B^{s_1+s,s_2-s} \subset B^{s_1,s_2}
\end{align}
\end{Lemme}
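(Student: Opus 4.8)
The plan is to argue directly from the definition of the anisotropic Besov seminorm,
\[
\|f\|_{B^{s_1,s_2}}=\sum_{j,q\in\Z}2^{js_1}2^{qs_2}\|\ddj\ddq^h f\|_{L^2},
\]
exploiting the fact that the isotropic block $\ddj$ localizes at $|\xi|\sim 2^j$ while the horizontal block $\ddq^h$ localizes at $|\xi_1|\sim 2^q$, and that $|\xi_1|\le|\xi|$ forces $q$ to be essentially bounded above by $j$.

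First I would make the support condition precise: by the choice of $\varphi$ (so that $\mathrm{supp}\,\varphi\subset\{1/2\le|\cdot|\le 2\}$), the Fourier transform of $\ddj\ddq^h f$ is supported in $\{2^{j-1}\le|\xi|\le 2^{j+1}\}\cap\{2^{q-1}\le|\xi_1|\le 2^{q+1}\}$. Since $|\xi_1|\le|\xi|$ on this set, it is empty unless $2^{q-1}\le 2^{j+1}$, i.e. $q\le j+2$; hence only pairs $(j,q)$ with $q-j\le 2$ contribute to the defining sum. Second, for such pairs I rewrite the weight as $2^{js_1}2^{qs_2}=2^{j(s_1+s)}2^{q(s_2-s)}\,2^{s(q-j)}$ and use $s>0$ together with $q-j\le 2$ to get $2^{s(q-j)}\le 2^{2s}$. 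Summing over all indices then yields
\[
\|f\|_{B^{s_1,s_2}}=\sum_{\substack{j,q\in\Z\\ q-j\le 2}}2^{j(s_1+s)}2^{q(s_2-s)}2^{s(q-j)}\|\ddj\ddq^h f\|_{L^2}\le 2^{2s}\sum_{j,q\in\Z}2^{j(s_1+s)}2^{q(s_2-s)}\|\ddj\ddq^h f\|_{L^2}=2^{2s}\|f\|_{B^{s_1+s,s_2-s}},
\]
which is exactly the claimed inclusion $B^{s_1+s,s_2-s}\subset B^{s_1,s_2}$.

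I do not expect a genuine obstacle: the only point that needs care is pinning down the correct offset in the support condition ($q\le j+2$, or any fixed constant depending on the cutoff), and observing that the $\ell^1$-in-$(j,q)$ structure of the $B^{s_1,s_2}$ norm makes the final summation immediate — no Cauchy--Schwarz or H\"older in the frequency indices is required, unlike for $\dot B^{s_1,s_2}_{p,r}$ with $r>1$. One could alternatively route the estimate through the isotropic chains of Lemma \ref{BesovSobolev}, but the direct two-line computation above is cleaner and entirely self-contained.
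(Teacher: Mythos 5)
Your proof is correct and follows essentially the same route as the paper's: both arguments rest on the observation that $\ddj\ddq^h f$ vanishes unless $q\le j+O(1)$ (since $|\xi_1|\le|\xi|$), and then exchange a factor $2^{-js}$ for $2^{-qs}$ using $s>0$. The only difference is cosmetic — you pin down the explicit offset $q\le j+2$ where the paper just invokes a constant $N_0$ with $j\ge q-N_0$.
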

\begin{proof} Let $f\in B^{s_1+s,s_2-s}\cap B^{s_1,s_2}$. By definition of the localisation $\ddj$ and $\ddq^h$, when applying $\ddj\ddq^h$ to a function, one can use that there exists $N_0$ such that $j\geq q- N_0$.
This implies that
\begin{align*}
\sum_{j,q\in\Z} 2^{js_1}2^{qs_2}\|\ddj\ddq^hf\|_{L^2}&\lesssim  \sum_{j,q\in\Z} 2^{js_1}2^{js}2^{-js}2^{qs_2}\|\ddj\ddq^hf\|_{L^2}   
\\&\lesssim   \sum_{j,q\in\Z,j\geq q-N_0} 2^{js_1}2^{js}2^{-qs}2^{qs_2}\|\ddj\ddq^hf\|_{L^2}  
\\& \lesssim  \sum_{j,q\in\Z} 2^{j(s_1+s)}2^{q(s_2-s)}\|\ddj\ddq^hf\|_{L^2}.
\end{align*}
\end{proof}

The next lemma provides a generalized version of the Kenig-Ponce-Vega inequality (the fractional version of the Leibniz rule) for all $s>0$, see \cite{Li} and \cite{Dancona}. \\
Recall the notation $\alpha, \beta\in \mathbb{N}^2$ (multi-index) and $\nabla^\alpha=(\partial_x^{\alpha_1}, \partial_y^{\alpha_2})$, while the operator $\Lambda^{s, \alpha}$ is defined via Fourier transform as
\begin{align*}
\widehat{\Lambda^{s, \alpha} f}(\xi) = \widehat{\Lambda^{s, \alpha}}(\xi) \widehat{f}(\xi), \qquad
\widehat{\Lambda^{s, \alpha}}(\xi)=i^{-|\alpha|}\partial_\xi^\alpha (|\xi|^s).
\end{align*}

\begin{Lemme}[Generalized Kenig-Ponce-Vega inequality \cite{Li}, Theorem 5.1]\label{lem:Lin}
Let $s>0$ and $1<p<\infty$. Then, for any $s_1, s_2 \ge 0$ such that $s_1+s_2=s$, and any $f, g \in \mathcal{S}(\R^d)$, 
\begin{align*}
\left\|\La^s (fg)- \sum_{|\alpha| \le s_1} \frac{1}{\alpha !} (\nabla^\alpha f) (\La^{s,\alpha}g) - \sum_{|\beta| \le s_2-1} \frac{1}{\beta !} (\nabla^\beta g) (\La^{s,\beta}f) \right\|_{L^p} \lesssim \|\La^{s_1} f\|_{\text{BMO}} \|\La^{s_2} g\|_{L^p}. 
\end{align*}
\end{Lemme}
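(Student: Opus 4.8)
The statement is quoted from \cite{Li} (see also D'Ancona \cite{Dancona}); here we sketch the standard paradifferential argument that proves it. By density it suffices to take $f,g\in\mathcal S(\R^d)$ and to pass to the limit afterwards. Writing bilinear Fourier multipliers as $\mathcal B_\sigma(f,g):=\iint\widehat f(\xi)\,\widehat g(\eta)\,\sigma(\xi,\eta)\,e^{ix\cdot(\xi+\eta)}\,d\xi\,d\eta$, one has $\La^s(fg)=\mathcal B_m(f,g)$ with $m(\xi,\eta)=|\xi+\eta|^s$, while, thanks to the normalisation built into $\La^{s,\alpha}$, the products $(\nabla^\alpha f)(\La^{s,\alpha}g)$ and $(\nabla^\beta g)(\La^{s,\beta}f)$ are exactly the bilinear multipliers with symbols $\xi^\alpha(\partial^\alpha|\cdot|^s)(\eta)$ and $\eta^\beta(\partial^\beta|\cdot|^s)(\xi)$. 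The plan is to cut $m$ by a smooth partition of unity, homogeneous of degree $0$, into a low--high region $|\xi|\lesssim|\eta|$, a high--low region $|\eta|\lesssim|\xi|$, and a resonant region $|\xi|\sim|\eta|$, and to treat the three contributions separately.

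On the low--high region I would Taylor-expand $|\xi+\eta|^s$ in $\xi$ about the origin, retaining the terms of order $|\alpha|\le s_1$: by homogeneity of the derivatives of $|\cdot|^s$ these reproduce the low--high paraproduct part of the first correction sum, and the Taylor remainder obeys $|R_{lh}(\xi,\eta)|\lesssim|\xi|^{s_1}|\eta|^{s_2}\,(|\xi|/|\eta|)^{\e}$ with $\e>0$; it is precisely here that the truncation level $|\alpha|\le s_1$ matters, since pushing the expansion one order past $s_1$ is what forces the off-diagonal gain $\e>0$. Symmetrically, on the high--low region I would expand in $\eta$ about the origin, retaining the orders $|\beta|\le s_2-1$: these produce the second correction sum, with a remainder $R_{hl}$ of comparable or better off-diagonal structure. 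A routine bookkeeping step is needed because the statement subtracts the \emph{full} products $(\nabla^\alpha f)(\La^{s,\alpha}g)$ and $(\nabla^\beta g)(\La^{s,\beta}f)$, whereas the expansion only produces their leading paraproduct pieces; the remaining (off-diagonal) pieces of these products are of the same type as the remainders and are absorbed there.

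There then remain three bilinear multipliers to estimate, obtained after factoring $\La^{s_1}$ off $f$ and $\La^{s_2}$ off $g$: the two remainders, and the resonant symbol $m\,\chi_{|\xi|\sim|\eta|}$ --- for the latter I would further localise in $|\xi+\eta|\sim 2^k$ (necessary since $|\xi+\eta|^s$ fails to be smooth at $\xi+\eta=0$ unless $s\in2\N$) and sum in $k$, the sum converging because $s=s_1+s_2>0$. For the high--low remainder and the resonant piece, $\La^{s_1}f$ lands in a frequency-annulus slot, so that $\sup_j\|\ddj\La^{s_1}f\|_{L^\infty}\lesssim\|\La^{s_1}f\|_{\dot B^0_{\infty,\infty}}\lesssim\|\La^{s_1}f\|_{\mathrm{BMO}}$; combining this with the Littlewood--Paley square-function characterisation of $L^p$ ($1<p<\infty$), Young's inequality applied to the geometric off-diagonal kernels, and the classical bound $\|\sum_j(\ddj a)(S_{j-1}b)\|_{L^p}\lesssim\|a\|_{\mathrm{BMO}}\|b\|_{L^p}$ for paraproducts, one obtains the bound $\lesssim\|\La^{s_1}f\|_{\mathrm{BMO}}\|\La^{s_2}g\|_{L^p}$. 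For the low--high remainder, $\La^{s_1}f$ lands instead in a low-pass slot $S_j$; there the extra factor $(|\xi|/|\eta|)^{\e}$ turns $S_j$ into a geometrically smoothed projection $\widetilde S_j=\sum_{j'\le j}2^{-\e(j-j')}\dot\Delta_{j'}$, for which $\sup_j\|\widetilde S_j\La^{s_1}f\|_{L^\infty}\lesssim_{\e}\|\La^{s_1}f\|_{\dot B^0_{\infty,\infty}}\lesssim\|\La^{s_1}f\|_{\mathrm{BMO}}$, and a paraproduct estimate closes this case as well.

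The point I expect to be the main obstacle is exactly this replacement of $L^\infty$ by $\mathrm{BMO}$ on $\La^{s_1}f$: it dictates the precise truncation levels ($|\alpha|\le s_1$ and $|\beta|\le s_2-1$) in the two correction sums, so that every surviving dyadic interaction carries a summable off-diagonal gain --- without it only the weaker estimate featuring $\|\La^{s_1}f\|_{L^\infty}$ is available --- and it is what makes the resonant frequencies delicate, since the (non-smooth) symbol there has to be arranged so that the square-function bound uses $\sup_j\|\ddj\La^{s_1}f\|_{L^\infty}$ rather than the uncontrollable $\sum_j\|\ddj\La^{s_1}f\|_{L^\infty}$.
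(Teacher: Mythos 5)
First, a point of comparison: the paper does not prove this lemma at all --- as the bracketed attribution indicates, it is quoted verbatim from Li \cite{Li} (Theorem 5.1; see also D'Ancona \cite{Dancona}), and the appendix explicitly declines to reproduce proofs available in the literature. So your sketch is not competing with an in-paper argument, and its overall architecture --- bilinear symbol $|\xi+\eta|^s$, trichotomy into low--high, high--low and resonant regions, Taylor expansion of the symbol in the low-frequency variable, Coifman--Meyer and BMO-paraproduct estimates for what remains --- is indeed the strategy of Li's proof.

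There is, however, one concrete claim in the sketch that is false as stated, and it sits exactly at the point you yourself flag as ``the main obstacle''. In the high--low region ($|\eta|\ll|\xi|$), Taylor-expanding $|\xi+\eta|^s$ in $\eta$ and truncating at $|\beta|\le s_2-1$ (i.e.\ at integer order $\lfloor s_2\rfloor-1$ when $s_2\notin\N$) leaves a remainder of size $|\eta|^{\lfloor s_2\rfloor}|\xi|^{s-\lfloor s_2\rfloor}$, whose ratio to the target symbol $|\xi|^{s_1}|\eta|^{s_2}$ is $(|\eta|/|\xi|)^{\lfloor s_2\rfloor-s_2}$; the exponent is $\le 0$, so there is no off-diagonal gain when $s_2\in\N$ and an actual blow-up as $|\eta|/|\xi|\to0$ when $s_2\notin\N$. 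This is not ``comparable or better'' than the low--high side, where truncating at $|\alpha|\le s_1$ really does yield the gain $\lfloor s_1\rfloor+1-s_1>0$ that you correctly compute. Nor can one simply expand one order further and treat the extra term $|\beta|=\lfloor s_2\rfloor$ separately: its high--low paraproduct piece carries the same bad ratio, and the associated sum $\sum_{j'<j}2^{(j-j')(s_2-\lfloor s_2\rfloor)}\|\dot\Delta_{j'}\Lambda^{s_2}g\|_{L^p}$ diverges, so that term is not individually bounded by the right-hand side and must be recombined with the remainder in a nontrivial way. Explaining why the second correction sum stops at $s_2-1$ while the first runs up to $s_1$, and closing this borderline interaction (together with the resonant region, where $|\xi+\eta|$ may be far smaller than $|\xi|\sim|\eta|$, and the genuine passage from $L^\infty$ to BMO), is the actual content of Li's proof; asserting that the leftover pieces ``are of the same type as the remainders and are absorbed there'' begs precisely this question. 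Since the paper uses the lemma as a black box, the cleanest course is to do the same and cite \cite{Li} rather than to sketch it.
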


  \bigbreak
  \textbf{Conflicts of interest.}
 On behalf of all authors, the corresponding author states that there is no conflict of interest. 
 
\bigbreak
\textbf{Data availability statement.}
 Data sharing is not applicable to this article as no data sets were generated or analyzed during the current study.

\bibliographystyle{plain}


\bibliography{Biblio}

\begin{thebibliography}{10}

\bibitem{BZ}
K.~Beauchard and E.~Zuazua.
\newblock Large time asymptotics for partially dissipative hyperbolic systems.
\newblock {\em Arch. Rational Mech. Anal}, 199, 177–227, 2011.

\bibitem{bianchini2021}
R.~Bianchini, A-L. Dalibard, and L.~Saint-Raymond.
\newblock Near-critical reflection of internal waves.
\newblock {\em Anal. PDE}, 14(1):205--249, 2021.

\bibitem{BianchiniNatalini2DB}
R.~Bianchini and R.~Natalini.
\newblock Asymptotic behavior of 2d stably stratified fluids with a damping
  term in the velocity equation.
\newblock {\em ESAIM: COCV}, 27, 2021.

\bibitem{BHN}
S.~Bianchini, B.~Hanouzet, and R.~Natalini.
\newblock Asymptotic behavior of smooth solutions for partially dissipative
  hyperbolic systems with a convex entropy.
\newblock {\em Comm. Pure and Appl. Math.}, 60, 1559-1622, 2007.

\bibitem{Bony}
J.-M. Bony.
\newblock Calcul symbolique et propagation des singularités pour les
  équations aux dérivées partielles non linéaires.
\newblock {\em Ann. Sci. École Norm. Sup.}, (4) 14:209--246, 1981.

\bibitem{castro2}
\'{A}. Castro, D.~C\'{o}rdoba, and D.~Lear.
\newblock Global existence of quasi-stratified solutions for the confined {IPM}
  equation.
\newblock {\em Arch. Ration. Mech. Anal.}, 232(1):437--471, 2019.

\bibitem{castro2019}
\'{A}. Castro, D.~C\'{o}rdoba, and D.~Lear.
\newblock On the asymptotic stability of stratified solutions for the 2{D}
  {B}oussinesq equations with a velocity damping term.
\newblock {\em Math. Models Methods Appl. Sci.}, 29(7):1227--1277, 2019.

\bibitem{CheminZhangAniso}
J-Y. Chemin and P.~Zhang.
\newblock On the global wellposedness of the 3-d incompressible anisotropic
  \textsc{N}avier-\textsc{S}tokes equations.
\newblock {\em Communications in Mathematical Physics}, 272:529--566, 2007.

\bibitem{CoulombelGoudon}
J.-F. Coulombel and T.~Goudon.
\newblock The strong relaxation limit of the multidimensional isothermal
  {E}uler equations.
\newblock {\em Trans. Amer. Math. Soc.}, 359(2):637–648, 2007.

\bibitem{ThesisCB}
T.~Crin-Barat.
\newblock Partially dissipative hyperbolic systems and applications to fluid
  mechanics.
\newblock {\em Thesis}, University Paris-Est, 2021.

\bibitem{CBD3}
T.~Crin-Barat and R.~Danchin.
\newblock Global existence for partially dissipative hyperbolic systems in the
  $\textsc{L}^p$ framework, and relaxation limit.
\newblock {\em Mathematische Annalen}, 2022.

\bibitem{CBD2}
T.~Crin-Barat and R.~Danchin.
\newblock Partially dissipative hyperbolic systems in the critical regularity
  setting : the multi-dimensional case.
\newblock {\em Journal de Mathématiques Pures et Appliquées}, 165:1--41,
  2022.

\bibitem{CBD1}
T.~Crin-Barat and R.~Danchin.
\newblock Partially dissipative one-dimensional hyperbolic systems in the
  critical regularity setting, and applications.
\newblock {\em Pure and Applied Analysis}, 4(1):85--125, 2022.

\bibitem{Dancona}
P.~D'Ancona.
\newblock A short proof of commutator estimates.
\newblock {\em J. Fourier Anal. Appl.}, 25(3):1134--1146, 2019.

\bibitem{lannes}
B.~Desjardins, D.~Lannes, and J.-C. Saut.
\newblock Normal mode decomposition and dispersive and nonlinear mixing in
  stratified fluids.
\newblock {\em Water Waves}, 3(1):153--192, 2021.

\bibitem{ElgindiIPM}
T.~M. Elgindi.
\newblock On the asymptotic stability of stationary solutions of the inviscid
  incompressible porous medium equation.
\newblock {\em Arch. Rational Mech. Anal.}, 225:573--599, 2017.

\bibitem{elgindi2}
T.~M. Elgindi and K.~R.~Shikh Khalil.
\newblock Strong ill-posedness in $\textsc{L}^\infty$ for the \textsc{R}iesz
  transform problem.
\newblock {\em arXiv preprint arXiv:2207.04556}, 2022.

\bibitem{klaus1}
T.~M. Elgindi and K.~Widmayer.
\newblock Sharp decay estimates for an anisotropic linear semigroup and
  applications to the surface quasi-geostrophic and inviscid {B}oussinesq
  systems.
\newblock {\em SIAM J. Math. Anal.}, 47(6):4672--4684, 2015.

\bibitem{gallay}
T.~Gallay.
\newblock Stability of vortices in ideal fluids: the legacy of {K}elvin and
  {R}ayleigh.
\newblock In {\em Hyperbolic problems: theory, numerics, applications},
  volume~10 of {\em AIMS Ser. Appl. Math.}, pages 42--59. Am. Inst. Math. Sci.
  (AIMS), Springfield, MO, [2020] \copyright 2020.

\bibitem{grafakos-book}
L.~Grafakos.
\newblock {\em Classical {F}ourier analysis}, volume 249 of {\em Graduate Texts
  in Mathematics}.
\newblock Springer, New York, third edition, 2014.

\bibitem{klaus}
Y.~Guo, B.~Pausader, and K.~Widmayer.
\newblock Global axisymmetric euler flows with rotation.
\newblock {\em arXiv preprint arXiv:2109.01029}, 2021.

\bibitem{Boris}
B.~Haspot.
\newblock Existence of global strong solutions in critical spaces for
  barotropic viscous fluids.
\newblock {\em Arch. Rational Mech. Anal}, 202, 427–460, 2011.

\bibitem{Hoff}
D.~Hoff.
\newblock Uniqueness of weak solutions of the {N}avier-{S}tokes equations of
  multimensionnal, compressible flow.
\newblock {\em SIAM J. Math. Anal}, 37, 1742–1760, 2006.

\bibitem{hsiao1}
L.~Hsiao.
\newblock {\em Quasilinear Hyperbolic Systems and Dissipative Mechanisms},
  volume Singapore.
\newblock World Scientific Publishing, 1997.

\bibitem{Junca}
S.~Junca and M.~Rascle.
\newblock Strong relaxation of the isothermal {E}uler system to the heat
  equation.
\newblock {\em Z. angew. Math. Phys.}, 53, 239–264, 2002.

\bibitem{KiselevYao}
A.~Kiselev and Y.~Yao.
\newblock Small scale formations in the incompressible porous media equation.
\newblock {\em https://arxiv.org/abs/2102.05213}, 2021.

\bibitem{Li}
D.~Li.
\newblock On {K}ato-{P}once and fractional {L}eibniz.
\newblock {\em Rev. Mat. Iberoam.}, 35(1):23--100, 2019.

\bibitem{CoulombelLin}
C.~Lin and J.-F. Coulombel.
\newblock The strong relaxation limit of the multidimensional {E}uler
  equations.
\newblock {\em Nonlinear Differential Equations and Applications NoDEA},
  20:447--461, 2013.

\bibitem{LinZhang2DMHD}
F.~Lin, L.~Xu, and P.~Zhang.
\newblock Global small solutions to 2-d incompressible mhd system.
\newblock {\em Journal of Differential Equations}, 259 (10), 2013.

\bibitem{LinZhang3DMHD}
F.~Lin and P.~Zhang.
\newblock Global small solutions to an \textsc{MHD}-type system: The
  three-dimensional case.
\newblock {\em Communications on Pure and Applied Mathematics},
  LXVII:0531–0580, 2014.

\bibitem{mar0}
P.~Marcati and A.~Milani.
\newblock The one-dimensional {D}arcy’s law as the limit of a compressible
  {E}uler flow.
\newblock {\em J. Differential Equations}, 84, 129-147, 1990.

\bibitem{mar1}
P.~Marcati and B.~Rubino.
\newblock Hyperbolic to parabolic relaxation theory for quasilinear first order
  systems.
\newblock {\em J. Differential Equations}, 162, 359-399, 2000.

\bibitem{PaicuAniso}
M.~Paicu.
\newblock Equation anisotrope de \textsc{N}avier-\textsc{S}tokes dans des
  espaces critiques.
\newblock {\em Rev. Mat. Iberoamericana}, 21:179--235, 2005.

\bibitem{WanBoussinesq}
R.~Wan.
\newblock Global well-posedness for the 2d \textsc{B}oussinesq equations with a
  velocity damping term.
\newblock {\em Discrete and Continuous Dynamical Systems}, 39, 2019.

\bibitem{XuWang}
J.~Xu and Z.~Wang.
\newblock Relaxation limit in \textsc{B}esov spaces for compressible {E}uler
  equations.
\newblock {\em Journal de Mathématiques Pures et Appliquées}, 99:43--61,
  2013.

\end{thebibliography}
\end{document}